\def\blfootnote{\xdef\@thefnmark{}\@footnotetext}
\newcommand{\p}{\partial}
\newcommand{\la}{\langle}
\newcommand{\ra}{\rangle}
\newcommand{\e}{\epsilon}
\newcommand{\eps}{\epsilon}
\newcommand{\be}{\begin{equation}}
\newcommand{\ba}{\begin{aligned}}
\newcommand{\bee}{\begin{equation*}}
\newcommand{\ee}{\end{equation}}
\newcommand{\ea}{\end{aligned}}
\newcommand{\eee}{\end{equation*}}
\newcommand{\bea}{\begin{equation} \begin{aligned} }
\newcommand{\eea}{\end{aligned}\end{equation}}
\newcommand{\R}{{\mathbf R}}
\newcommand{\normm}{ } 
\DeclareMathOperator{\spt}{spt}
\DeclareMathOperator{\dist}{dist}
\newcommand{\grad}{\nabla}
\newcommand{\laplace}{\Delta}
\newcommand{\N}{\mathbf{N}}
\renewcommand{\div}{\grad\cdot}
\newcommand{\U}{\mathcal{U}}
\newcommand{\red}[1]{{\textcolor{red}{#1}}} 
\newcommand{\blue}[1]{{\textcolor{blue}{#1}}} 
\newcommand{\black}[1]{{\textcolor{black}{#1}}} 
\theoremstyle{plain}
\newtheorem{theorem}{Theorem}[section]
\newtheorem{corollary}[theorem]{Corollary}
\newtheorem{lemma}[theorem]{Lemma}
\newtheorem{proposition}[theorem]{Proposition}
\newtheorem{prop}[theorem]{Proposition}
\theoremstyle{remark}
\newtheorem{remark}[theorem]{Remark}
\theoremstyle{definition}
\newtheorem{definition}[theorem]{Definition}
\numberwithin{equation}{section}
\begin{document}
\title[Fast diffusion asymptotics on a bounded domain]{Asymptotics near extinction for nonlinear fast diffusion on a bounded domain}
\thanks{MSC 2020 Codes: Primary: 35K55; Secondary: 35B40, 35J61, 35Q79, 37L25, 80A19. \\
BC thanks the University of Toronto, his affiliation at the time this research was commenced.  BC's research was partially supported by National Research Foundation of Korea grant No. 2022R1C1C1013511, POSTECH Basic Science Research Institute grant No. 2021R1A6A1A10042944, and POSCO Science Fellowship.  RJM's research was supported in part by the Canada Research Chairs Program and
Natural Sciences and Engineering Research Council of Canada Grant RGPIN 2020-04162. CS's work is funded by the Deutsche Forschungsgemeinschaft (DFG, German Research Foundation) under Germany's Excellence Strategy EXC 2044 --390685587, Mathematics M\"unster: Dynamics--Geometry--Structure.
\copyright 2022 by the authors.
}

\author{Beomjun Choi}
\address{BC: Department of Mathematics, POSTECH, Pohang, Gyeongbuk, South Korea}
\email{bchoi@postech.ac.kr}

\author{Robert J. McCann}
\address{RJM: Department of Mathematics, University of Toronto, Toronto ON Canada}
\email{mccann@math.toronto.edu}

\author{Christian Seis}
\address{CS: Institut f\"ur Analysis und Numerik, Westf\"alische Wilhelms-Universit\"at M\"unster, M\"unster, Germany}
\email{seis@wwu.de}

\begin{abstract}
On a smooth bounded Euclidean domain, Sobolev-subcritical fast diffusion with vanishing boundary trace is known to lead to finite-time extinction, with a vanishing profile selected by the initial datum. In rescaled variables, we quantify the rate of convergence to this profile uniformly in relative error, showing the rate is either exponentially fast (with a rate constant predicted by the spectral gap), or algebraically slow (which is only possible in the presence of non-integrable zero modes). In the first case, the nonlinear dynamics are well-approximated by exponentially decaying eigenmodes up to at least twice the gap; this refines and confirms a 1980 conjecture of Berryman and Holland. We also improve on a result of Bonforte and Figalli, by providing a new and simpler approach which is able to accommodate the presence of zero modes, such as those that occur when the vanishing profile fails to be isolated (and possibly belongs to a continuum of such profiles).
\end{abstract}
\maketitle

  \section{Introduction}\blfootnote{To appear in Archive for Rational Mechanics and Analysis.}

Setting $m_q := 1 -\frac{2}{n+q}$ \cite{DenzlerKochMcCann15}, consider the fast diffusion equation with the exponent $0<m \in ({m_{1-\frac n2}},1)$, integrable non-negative initial data, and Dirichlet boundary conditions, 
on a smooth bounded domain  $\Omega \subset \R^n$:
\bea \label{eq-fastdiffusion}
w_\tau = \Delta (w^m)  \text{ on } \Omega
\\ w=0 \text{ on } \p \Omega .
\eea
This equation models heat flow in a material whose thermal conductivity $m w^{m-1}$ depends inversely on its local temperature $w$.  {With $m=1/2$ it has also been used to model the 
diffusion of plasma ions across a magnetic field in simulations \cite{OkudaDawson73} and experiments \cite{TamanoPraterOhkawa73}.
}
{For such an initial value problem,} it is known that the {vanishing Dirichlet boundary conditions drive the solution $w$ to} become extinct in finite time, e.g., \cite{Sabinina62,Sabinina65,BerrymanHolland80}. {To understand the vanishing profile, rescale} around the extinction time $T>0$,
\bea
{ w(x,\tau)= \left((1-m)(T-\tau)\right)^{\frac{1}{1-m}} v^{\frac1m}(x,t), \quad t= \frac{m}{1-m} \ln \frac{T}{T-\tau}}
 \eea
  to obtain an equation for $v(x,t)$ with $p=1/m$:
\bea  \label{eq-U}
\frac{\p\,}{\p t} 
\left(\frac{v^p}{p}\right)
= \Delta v + v^p  \text{ on } \Omega \\
v=0 \text{ on }\p\Omega.
\eea
 The rescaled solution $v$ on $\Omega$ is known to converge (both subsequentially \cite{BerrymanHolland80} and sequentially \cite{FeireislSimondon00})
 as $t\to \infty$ to some unique profile $V(x)$ 
 selected by the initial dat{um};
 moreover, $V$ is a positive solution to the stationary elliptic problem 
 
 \bea \label{eq-V}  \Delta V +  V^p=0 \text{ on } \Omega  
 \\ V=0 \text{ on } \p \Omega.
 \eea
Solutions to \eqref{eq-V} represent critical points of the Lyapunov functional {\cite{BerrymanHolland78}}
\begin{equation}\label{Lyapunov}
E(v)=\frac12\|\nabla v\|^{ 2}_{L^2(\Omega)} -\frac 1{p+1} \|v\|_{L^{p+1}(\Omega)}^{p+1}
\end{equation}
for the dynamics \eqref{eq-U}.  
 Sobolev subcriticality $p^{-1} > {m_{1-\frac n2}} =\frac{n-2}{n+2}$ implies 
 {$L^{p+1}$-coercivity of the energy on the $L^{p+1}$-unit sphere,  from which    the  existence of positive steady states \eqref{eq-V}
 had earlier been derived by Berger \cite{Berger77}. 
  Brezis and Nirenberg showed such solutions need not be unique however \cite{BrezisNirenberg83}; 
  other uniqueness and nonuniqueness results concerning
   positive solutions in specific domain geometries may be found in the works of 
   Gidas-Ni-Nirenberg \cite{GidasNiNirenberg79} on the ball,  
    Dancer \cite{Dancer88} \cite{Dancer90} on connected approximations to disjoint unions of balls, 
    Damascelli-Grossi-Pacella \cite{DamascelliGrossiPacella99} on domains with symmetry, 
   Zou \cite{Zou94} on rough balls,       
   Akagi-Kajikiya \cite{AkagiKajikiya14} who used instability to show that uniqueness fails on thin annuli, {so solutions and their rotations form continuous families,}  and Akagi \cite{Akagi16} who showed the stability of energy minimizers. 
   On the other hand,
   Feireisl-Simondon \cite{FeireislSimondon00} showed that the evolution \eqref{eq-U} selects and converges to one of the positive solutions $V$ of \eqref{eq-V} (which may depend on the initial dat{um}) as $t\to \infty$. They did not give a result on the convergence rate. Later Bonforte, Grillo and V\'azquez \cite{BonforteGrilloVazquez12} showed convergence in relative error $h:=\frac{v-V}{V}$, i.e. 
 \bea \label{vre} 
 \lim_{t\to \infty} \left \Vert \frac{v(x,t)}{V(x)} -1 \right \Vert _{L^\infty(\Omega)} = 0,  \eea 
{and provided an exponential rate of convergence in entropy sense,
under a non-degeneracy condition which they were able to verify for $m$ close to $1$.}

It has been a problem of considerable interest (a) to quantify the rate of convergence {unconditionally, and 
(b) to predict the higher-order asymptotics of the relative error $h$.}
{In contrast to the analogous questions set on the full space $\Omega = \R^n$, resolved in \cite{DenzlerKochMcCann15} and its references \cite{BlanchetBonforteDolbeaultGrilloVazquez09} \cite{CarrilloToscani00} \cite{CarrilloVazquez03} \cite{DelPinoDolbeault02} \cite{KimMcCann06} \cite{Otto01}, this challenge is compounded by the fact that the linearized problem has unstable modes ({including those} corresponding to $\tau$-translations in the original variables, which blow up at different times $T$),  and can also have zero modes, including modes called {\em integrable} that arise e.g.~for reasons of symmetry,  as for the thin annuli mentioned above.
 Recently,  Bonforte and Figalli overcame {some of} these challenges to solve 
 (a) for $C^{2,\alpha}$-generic smooth domains including the ball \cite{bonforte2021sharp}.  In a suitable Hilbert space, they show the linearized evolution of $h$ is generated by a self-adjoint operator possessing a complete basis of eigenfunctions.
 To summarize their findings: the unstable (negative) modes cannot be active due to Feireisl and Simondon's convergence, and neutral (zero) modes are absent on generic domains \cite{SautTemam79},  
 in which case they show that the relative error decays {uniformly} with an exponential rate $\lambda$ no smaller than the 
 first positive eigenvalue.  {On arbitrary smooth domains however, such a result eludes their techniques,  which rely on the kernel of the linearized operator being trivial, and hence the limiting profile being isolated.
 {A simpler derivation of the rate of exponential convergence was subsequently obtained under the same restriction by Akagi \cite{Akagi21+}; although he expresses convergence in terms of an energy rather than the entropy or relative error,
 these quantities can be compared using the boundary regularity theory of \cite{JinXiong22+}.
Finally, Jin and Xiong showed unconditionally that the rate of convergence is at least algebraic in $t$ \cite{JinXiong20+},
but with a power that is not explicit. In the present manuscript, we bridge this wide gap (between exponential upper and algebraic lower bounds on the rate) by developing a new approach which yields that}
  $\|h\|_{L^\infty}$ either decays exponentially with rate $\lambda$ or else decays algebraically at a rate $1/t$ or slower. 
In the second case, {not only must
zero modes be present,  but they must be non-integrable in a sense made precise below.
Moreover, when the decay rate is exponential,} 
we address (b) by showing that the longtime asymptotics of the nonlinear problem are described by the linearized dynamics up to the error $e^{-2\lambda t}$ produced by quadratic corrections. {This refines and confirms a conjecture made for the case $n=1$ by Berryman and Holland \cite{BerrymanHolland80}.}

Besides being more powerful, our approach is also simpler than Bonforte and Figalli's. Instead of augmenting Del Pino and Dolbeault's nonlinear entropy method \cite{DelPinoDolbeault02} with approximate orthogonality conditions, we rely
on an ODE lemma of Merle and Zaag \cite{MerleZaag98},  which implies that the dynamics are eventually dominated either by  stable or neutral modes. 
We must also control the nonlinearity  by adapting parabolic regularity estimates to the geometry of the steady state around the domain boundary. Delicately matching these estimates to the ODE argument allows us to estimate the rate of convergence via the dichotomy described above.   From there we use Hilbert projection techniques
to get an asymptotic expansion up to an $e^{-2\lambda t}$ error.  The latter resembles Denzler, Koch and McCann's treatment of higher-order asymptotics in the narrower range
$m_0=1-\frac2n<m<1$ of 
evolutions on the unbounded domain $\Omega=\R^n$ --- though the linearized analysis around the selfsimilar spreading solution 
described in \cite{DenzlerKochMcCann15} \cite{DenzlerKochMcCann16} and their references is more subtle than the present problem,
and our rate estimate does not require us to establish differentiable dependence of the flow on initial conditions.
{On the other hand,  the whole space problem is not plagued by the multiplicity and continua of limiting profiles that we presently face.}
In the current setting, finer aspects of the dynamics (beyond
rate $2\lambda$) may conceivably be described by constructing invariant manifolds, but we defer the exploration of this possibility to future research.  
In the porous medium regime (which refers to the complementary range of nonlinearities $m>1)$ on the whole space $\Omega=\R^n$,  such a construction was completed by one of 
us \cite{Seis14} \cite{Seis15+} following earlier work of Angenent \cite{Angenent88} for $n=1$ and Koch \cite{Koch99} for $n>1$.

The remainder of this manuscript is structured as follows.  In the subsequent Section \ref{S1a}, we rewrite the problem in terms of the relative error, for which most of our analysis is conducted, summarize the spectral theory and introduce some notation. Along with the necessary terminology, Section \ref{S2} states our two main dichotomy results. After recalling variants of the Merle-Zaag lemma \cite{MerleZaag98} due to K.~Choi with Haslhofer and Hershkovits on the one hand \cite{ChoiHaslhoferHershkovits18+} and with Sun on the other \cite{ChoiSun20+}, our first dichotomy --- separating fast and slow convergence --- is proved in Section \ref{S3},  apart from the parabolic regularity estimates which yield a quadratic bound in the relative error for the nonlinearity.  These are postponed to Section \ref{S4}.  The remaining dichotomy is established in Section \ref{S:second dichotomy}.

\section{Linearized dynamics and relative error}\label{S1a}
 In terms of the relative error $h:=\frac{v-V}{V} $, 
 the dynamics \eqref{eq-U} take the form
  \bea \label{eq-relativeerror}  \p_t h +  L_V h =  N (h),\eea 
 {where $L_V$ is the linear operator relative to $V$,}
 \begin{equation}\label{eqL}
 \begin{aligned}
 L_{V} h &= -\frac{1}{ V^{p}} \Delta (hV) - p h\\
& = -V^{1-p}\laplace h - 2V^{-p} \grad V\cdot \grad h - \left({p-1}\right)h\\
& = -V^{-1-p}\div(V^2 \grad h) - \left({p-1}\right)h,
 \end{aligned}
 \end{equation}
{and $N(h)$ is the nonlinearity, given by}
 \bea \label{eq-problemrelative}
 N (h) &= (1+h)^p - 1 - p h  + \big(1-(1+ h)^{p-1} \big)  \partial_t h 
 .\eea
Observe that $N(h) =   M_V(h)$ for any solution $h$ to \eqref{eq-relativeerror},
where
\begin{equation}
\label{32a}
 {M_V} (h) = \frac 1{(1+h)^{p-1}} \left((1+h)^p-1-ph\right) + \left(1- \frac1{(1+h)^{p-1}} \right)
  L_{V}h.
\end{equation}
Indeed, solving \eqref{eq-relativeerror} for $\partial_t h$, dividing by the prefactor and shifting the nonlinearities onto the right-hand side yields  $N(h) = M_V(h)$. 
This allows us to exchange temporal for spatial derivatives of $h$ in the nonlinearity. 
Since in most parts of the paper the reference stationary solution is fixed, we will often write $L=L_V$ for notational simplicity.

The  relative error  and the linear operator are best understood when analyzed in suitable weighted Lebesgue and Sobolev spaces. Given $\sigma>0$ and a positive solution $V$ to \eqref{eq-V}, the weighted inner product 
$$\la f,g\ra_{\sigma}= \int_\Omega fg V^\sigma dx
$$ makes
$
L^2_\sigma =L^2_\sigma(\Omega) :=  \{ f :\: \la f,f\ra_{\sigma} <\infty \}
$
into a Hilbert space. We will  occasionally  be concerned with more general Lebesgue spaces induced by the norm
\[
\|f\|_{L^q_{\sigma}} = \left( \int_{\Omega} |f|^q\, d\mu_{\sigma}\right)^{\frac1q},
\]
where  $d\mu_p(x):=V(x)^p dx$. Weighted (homegeneous) Sobolev spaces such as $\dot H^1_{\sigma}$ are defined analogously; c.f.~\eqref{eigenvalue energy} below.

Multiplication by $V$ acts as an isometry between $L^2_{p+1}$ and $L^2_{p-1}$.
Under this isometry,  the linear operator {$L_V + pI$} is unitarily equivalent to an operator $\tilde L = V \circ {(L_V + pI)} \circ V^{-1}$ with compact inverse on $L^2_{p-1}$,
whose spectral theory, subject to vanishing Dirichlet boundary conditions,  was elucidated by Bonforte and 
Figalli~\cite{bonforte2021sharp}: the corresponding operator
$L{=L_V}$ is a self-adjoint semibounded operator on $L^2_{p+1}$;the spectrum of $L$ is discrete and the eigenfunctions form a basis of $L^2_{p+1}$.  
{They are critical points for the restriction of the weighted Dirichlet energy
\begin{equation}\label{eigenvalue energy}
E_V(\phi)  =  \| \phi \|_{\dot H^1_2}^2 :=\int_\Omega |\nabla \phi|^2 V^2 dx
\end{equation}
to that subset of the $L_{p+1}$ unit-sphere for which the boundary trace of $\phi V$ vanishes}.
 Using two nonnegative integers $I$ and $K$, let us list the eigenvalues with repetition as 
\bea\lambda_{-I} \le \ldots \le \lambda_{-1} < 0= \lambda_{0}=  \ldots= \lambda_{K-1} < \lambda_{K} \le \ldots.     \eea


 \begin{itemize}
\item  The integer $I$ represents the dimension of the unstable modes of $L$ (and coincides with the Morse index of $E(v)$ from \eqref{Lyapunov} at $V$).
\item $K$ represents the dimension of kernel of $L$ and any corresponding eigenfunctions are called Jacobi fields. 
\item Let us call the eigenfunctions which correspond to $\lambda_{-I}$ to $\lambda_{-1}$ the {\em  unstable modes}, those corresponding to $\lambda_{0}$ to $\lambda_{K-1}$ the {\em neutral (or central) modes}, and the remaining eigenfunctions (starting with eigenvalue $\lambda_{K}$)the {\em stable modes.}  The corresponding eigenspaces will be denoted by $E_u$, $E_c$ and $E_s$, respectively. They are understood as subspaces of $L^2_{p+1}$, so that $L^2_{p+1} = E_u\oplus E_c \oplus E_s$.
\item $\lambda_{{-I}} =  1-p= 1-\frac1{m}$  and it is actually simple (a.k.a.~multiplicity $1$, so $\lambda_{-I}<\lambda_{1-I}$) 
with corresponding eigenfunction ${1}$ (called the ground state).  {In the original variables it corresponds to time translation of the solution;  the signs $I>0>\lambda_{-I}$ account for the fact that $\tau$-translations of a given solution disappear at different times $T$ hence diverge sharply from each other under the rescaling appropriate for one of them.}
\end{itemize}

 \textbf{Notation.} We finally comment briefly on some notation that we will frequently use throughout this work: We write $a\lesssim b$ if there is a constant $C$ such that $a\le C b$. The constant $C$ may depend on the limiting profile $V$, the domain, and other parameters such as $p=1/m$,  but this dependence
is continuous under small perturbations of $V$ in the relatively-uniform topology generated by
\eqref{relatively-uniform base} below,
hence $C$ may be regarded as being locally independent of $V$. We write $t \gg 1$ to indicate $t$ must be sufficiently large.}

\section{{Fast versus slow convergence dichotom{ies}}} 
\label{S2}

{Recall Bonforte, Grillo and V\'azquez \cite{BonforteGrilloVazquez12} showed  if $V(x)$ is the limit solution of $v(x,t)$ (see \cite{FeireislSimondon00}) then the relative error $h(t) = \frac{v(t)}V -1$ decays uniformly:
\bea
\label{eq-asymptoticrelative} \Vert h \Vert _{L^\infty(\Omega)} = o(1) \text{ as } t\to \infty. 
\eea 
In this section we describe two dichotomy theorems which establish that a spectral gap gives the sharp rate of exponential
 convergence,  unless the  
 linearized dynamics has a non-integrable kernel in the refined sense of Definition \ref{D3}. {When Definition \ref{D3} fails to be satisfied,} 
 we show the convergence occurs either exponentially at the rate of the spectral gap or no faster that $O(1/t)$.}
 
  \begin{theorem}[{First dichotomy} for asymptotic behavior]\label{thm-dichotomy}
   For $0<m\in {(m_{1-\frac n2},1)}=(\frac{n-2}{n+2},1)$, let $\Omega\subset \mathbb{R}^n$ be a smooth bounded domain and $v(x,t) {\ge 0}$ on $(x,t)\in \Omega \times [0,\infty)$ be a bounded solution to the {evolution} problem \eqref{eq-U}. Let $V(x)$ be the {classical solution to \eqref{eq-V}} satisfying \eqref{vre}. 
 Then 
{there exist positive constants
	$\epsilon=\epsilon(V,p)$ and $C=C(V,p)$ such that whenever $\|h(t)\|_{L^\infty} \le \epsilon$ for all $t \ge 0$} 
 exactly one of the following alternatives holds:  
 
   \begin{enumerate}
 	\item the relative error $h(t) := \frac{v(t)}{V} -1$ decays algebraically or slower
	\bea   \label{eq-slowdecay} 
	C\left \Vert h(t) \right\Vert_{L^\infty} \ge 
 \left \Vert h(t) \right\Vert_{L^2_{p+1}} 
\ge 
(Ct)^{-1}  
\qquad \forall\ t \gg 1; 
\eea   	
 	 \item the relative error $h$ decays exponentially or faster, 
	 	\bea \label{eq-fastdcay}
	C^{-1}\left \Vert h(t)
	\right\Vert_{L^2_{p+1}}\le \left \Vert h(t) 
	\right\Vert_{L^\infty }\le 
	C e^{-\lambda t} \|h(0)\|_{L^2_{p+1}} 
	\quad \forall\ t {\ge 1,
	}\eea 
 	{where $\lambda=\lambda_K>0$} is the first positive eigenvalue of linearized operator $ L$ in \eqref{eq-relativeerror}.  	
 	 
 	{Moreover, whenever \eqref{eq-fastdcay} holds for some $\lambda \ge \lambda_K$ and any $C=C(V,p,\lambda)$, if}   $J \in \N_0$ is given such   that $ \lambda_J< 2\lambda  $ and $\lambda_J<\lambda_{J+1} =: \mu$, and $\varphi_0, \varphi_{1}, \dots,\varphi_J$ are  corresponding eigenfunctions chosen in such a way that they can be extended to a complete $L^2_{p+1}$ orthonormal basis, then there are constants $C_i\in \mathbb{R}$  
	such that {$t \ge 1$ yields}  	
 	  \bea \label{eq-fastdcay'}\left \Vert 
	  h(t)- \sum_{i=0}^{J} C_{i}  e^{-\lambda_i t} \varphi_i\right \Vert_{L^2_{p+1}} 
	  {\le \tilde C\|h(0)\|_{L^2_{p+1}} \times} \begin{cases} e^{-2\lambda t} &  \mbox{if }\mu>2\lambda,\\ te^{-2\lambda t}&\mbox{if }\mu=2\lambda,\\
e^{-\mu t} &\mbox{if }\mu<2\lambda,\end{cases}
 \eea 
 {for some $\tilde C=\tilde C(V,p,\lambda,C)$}.
Finally, $C_i=0$ for any $\lambda_i \in [0,\lambda)$ so, in particular, for any $i< K$.
 \end{enumerate}
   \end{theorem}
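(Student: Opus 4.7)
The plan is to decompose $h$ spectrally and analyze the resulting system of scalar ODEs. Fix an $L^2_{p+1}$-orthonormal eigenbasis $\{\varphi_i\}$ of $L$ with eigenvalues listed as in Section \ref{S1a}, and set $c_i(t) := \langle h(t),\varphi_i\rangle_{p+1}$. Pairing \eqref{eq-relativeerror} with $\varphi_i$ in $L^2_{p+1}$ and using the self-adjointness of $L$ yields the scalar equation
\[
\dot c_i(t) + \lambda_i c_i(t) = F_i(t), \qquad F_i(t) := \langle N(h(t)),\varphi_i\rangle_{p+1}.
\]
The crucial analytic input, proved in Section \ref{S4}, is a quadratic estimate of the form $\|N(h)\|_{L^2_{p+1}} \lesssim \|h\|_{L^\infty}\|h\|_{L^2_{p+1}}$, obtained by eliminating $\partial_t h$ from \eqref{eq-problemrelative} via the identity $N(h) = M_V(h)$ from \eqref{32a} and invoking parabolic regularity tailored to the boundary degeneracy of $V$. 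Bessel's inequality then yields $\bigl(\sum_i F_i(t)^2\bigr)^{1/2} \lesssim \|h(t)\|_{L^\infty}\|h(t)\|_{L^2_{p+1}}$, with the prefactor $\|h\|_{L^\infty}=o(1)$ by \eqref{eq-asymptoticrelative}.

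\textbf{Merle--Zaag dichotomy.} Group the coefficients as
\[
U(t) = \sum_{i<0}c_i(t)^2, \qquad C(t) = \sum_{0\le i<K}c_i(t)^2, \qquad S(t) = \sum_{i\ge K}c_i(t)^2,
\]
and multiply each scalar ODE by $c_i$. Summing over the corresponding index range and using Cauchy--Schwarz together with the nonlinearity bound gives schematic differential inequalities
\[
\dot U \ge 2|\lambda_{-1}|U - \varepsilon(t)(U+C+S), \quad |\dot C| \le \varepsilon(t)(U+C+S), \quad \dot S \le -2\lambda_K S + \varepsilon(t)(U+C+S),
\]
with $\varepsilon(t)=o(1)$. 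The Merle--Zaag ODE lemma \cite{MerleZaag98} then forces exactly one of three asymptotic regimes: $U$, $C$, or $S$ dominates. The unstable regime is excluded by the convergence \eqref{eq-asymptoticrelative}, which prevents $U$ from growing. In the stable regime, $U+C=o(S)$, and the $S$-inequality bootstraps to the asserted exponential rate $\lambda_K$ after absorbing the vanishing cross-term. In the central regime, $\|h\|_{L^\infty}\sim\|h\|_{L^2_{p+1}}\sim\sqrt{C}$ (again by parabolic regularization), so $|\dot C|\lesssim C^{3/2}$ and Gronwall yields $\sqrt{C(t)}\gtrsim 1/t$, which is \eqref{eq-slowdecay}. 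The $L^\infty$-to-$L^2_{p+1}$ two-sided comparisons in both alternatives follow from the same regularization applied on unit time intervals.

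\textbf{Higher-order expansion.} Now assume \eqref{eq-fastdcay} at some rate $\lambda\ge\lambda_K$. Then $|F_i(t)|\lesssim e^{-2\lambda t}\|h(0)\|_{L^2_{p+1}}^2$ uniformly in $i$. The scalar ODE is integrated by variation of parameters. For eigenvalues $\lambda_i\ge\lambda$, forward integration combined with absolute convergence of $\int_0^\infty e^{\lambda_i s}F_i(s)\,ds$ gives
\[
c_i(t) = C_i e^{-\lambda_i t} - \int_t^\infty e^{-\lambda_i(t-s)}F_i(s)\,ds, \qquad C_i := c_i(0) + \int_0^\infty e^{\lambda_i s}F_i(s)\,ds,
\]
and a routine estimate of the remainder integral produces the stated case split according to whether $\lambda_i<2\lambda$, $\lambda_i=2\lambda$, or $\lambda_i>2\lambda$. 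For $\lambda_i\in[0,\lambda)$ and for the unstable modes with $|\lambda_i|<2\lambda$, the hypothesis $|c_i(t)|\le\|h(t)\|_{L^2_{p+1}}\le Ce^{-\lambda t}$ forces the constant of integration to vanish and permits the backward representation
\[
c_i(t) = -\int_t^\infty e^{-\lambda_i(t-s)}F_i(s)\,ds,
\]
which gives $|c_i(t)|\lesssim e^{-2\lambda t}$ and identifies $C_i=0$ for $\lambda_i\in[0,\lambda)$. Assembling these modal estimates orthogonally in $L^2_{p+1}$, retaining $i=0,\dots,J$ in the explicit sum, and controlling the tail $\sum_{i>J}c_i\varphi_i$ by a second Merle--Zaag-style stability argument with $\lambda_K$ replaced by $\mu=\lambda_{J+1}$ yields \eqref{eq-fastdcay'}.

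\textbf{Main obstacle.} The principal technical difficulty is not the ODE bookkeeping above but the quadratic nonlinearity estimate on which everything rests. Because $N(h)$ involves $\partial_t h$, and because the natural ambient norm $L^2_{p+1}$ is degenerately weighted along $\partial\Omega$ where $V$ vanishes linearly, one must first trade the time derivative for spatial derivatives through \eqref{32a} and then deploy Schauder and Calder\'on--Zygmund type parabolic estimates adapted to this boundary weight. Calibrating the decay of $\varepsilon(t)$ in the Merle--Zaag inequalities to genuine quadratic dependence on $\|h\|_{L^\infty}$, rather than on a larger derivative norm, is precisely what closes the dichotomy at the critical rate $\lambda_K$; this calibration is the content of Section \ref{S4}.
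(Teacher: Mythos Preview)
Your proposal is correct and follows essentially the same architecture as the paper: spectral projection onto unstable/center/stable spaces, Merle--Zaag dichotomy, bootstrap to the sharp rate $\lambda_K$ in the stable case, Riccati-type lower bound in the central case, and mode-by-mode integration for the higher asymptotics.

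One point worth flagging: the paper does \emph{not} control the nonlinearity via the $M_V$ representation \eqref{32a} in the main dichotomy argument. Instead it keeps the $\partial_t h$ term in $N(h)$ and bounds it pointwise using Proposition~\ref{P1}, which gives $\|\partial_t h(t)\|_{L^\infty}\lesssim \|h(t-1)\|_{L^2_{p+1}}$; this yields $\|N(h(t))\|_{L^2_{p+1}}\lesssim \|h(t-1)\|_{L^2_{p+1}}\,\|h(t)\|_{L^2_{p+1}}$ with a unit time lag. Your suggested route through $M_V$ would instead require a bound on $\|L_Vh\|_{L^2_{p+1}}$, which is not directly provided by Section~\ref{S4} and would again only hold after a smoothing delay, so your inequality $\|N(h)\|\lesssim\|h\|_{L^\infty}\|h\|_{L^2_{p+1}}$ as stated (same time on both sides) is slightly too optimistic. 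This does not affect the outcome, since the time lag is harmless once one knows $\|h(t+1)\|\gtrsim\|h(t)\|$ in Case~1 and exponential decay in Case~2. Also, the tail control in your last paragraph is not a second Merle--Zaag argument but simply the Gronwall inequality $\dot z+\mu z\le\|N(h)\|$ applied to $z=\|h_s-\sum_{i\le J}y_i\varphi_i\|$, together with a separate backward integration of the first two inequalities in \eqref{eq-odelema0} to bound $\|h_c\|+\|h_u\|$ by $e^{-2\lambda t}$.
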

 \begin{remark} 
  \begin{enumerate}
 \item The result of Bonforte-Figalli in \cite{bonforte2021sharp} shows \eqref{eq-fastdcay} 
 when $K=0$ (meaning $\ker\, L_V $ is trivial). This assumption prevents the first alternative \eqref{eq-slowdecay}.
 
 \item {As Bonforte and Figalli argued, we can relax the boundedness assumption on $v$ in Theorem \ref{thm-dichotomy} since initial conditions 
 \begin{equation}\label{initial constraints}
v_0 \in \begin{cases} 
\qquad\qquad L^1(\Omega) & {\rm if}\ m \in {(m_0,1)}=(1-\frac 2n,1) \\
\bigcup\limits_{q>\frac n2 (1-m)} L^q(\Omega) & {\rm if}\ m \in {(m_{1-\frac n2},m_0]} =(\frac{n-2}{n+2},1-\frac 2n]
\end{cases} 
\end{equation}
  lead to solutions which remain bounded after any short time \cite{DiBenedettoKwongVespri91}.}
  \item {Due to  Bonforte, Grillo and Vazquez result \eqref{eq-asymptoticrelative}, the smallness assumption on our initial data is no restriction either.}
  \item{{Notice the factor $t$ appearing in the bound \eqref{eq-fastdcay'} when $\mu=2\lambda$ accounts for the possibility of the kind of eigenvalue resonances described in \cite{Angenent88}.
  }}
 \end{enumerate}	
 \end{remark}
 
{As remarked above, triviality of the kernel, $K=0$, implies exponential decay.  However, we can also show that in certain situations,  exponential decay also occurs when $K>0$. 
For example, some kernel of $L$ can be obtained as a tangent variation among stationary states {(as for the non-rotationally symmetric limit states $V$ found by Akagi and Kajikiya on thin annuli \cite{AkagiKajikiya14}):} if there is a one-parameter family  $(V_s)_{s \in (-s_0,s_0)}$ of solutions to \eqref{eq-V} with $V_0=V$ and $\p_s V (0)\neq 0$,  then $L({\frac{\p_s V(0)}{V}}) = 0$.  If all zero modes are accounted for by such continuous symmetries in the sense of Definition~\ref{D3} below,  exponential decay occurs in spite of the fact that $K>0$.
To formulate this more precisely,  consider the whole family $S \subset H^{1}(\Omega)$ of weak solutions $V$ to the Dirichlet problem \eqref{eq-V}, i.e. Sobolev functions for which $\varphi \in C^1(\bar \Omega)$ implies
\begin{equation}\label{weak elliptic}
\int_\Omega \nabla \varphi \cdot \nabla V dx = \int_\Omega \varphi V^p dx. 
\end{equation} 
In Lemma \ref{L13}, we shall eventually infer $S \subset C^{3,\alpha}(\Omega)$ for some $\alpha \in (0,1)$ and that $V/W \in L^\infty(\Omega)$ for all $V,W \in S$. The latter implies that the sets\be \label{relatively-uniform base}
B_r(V) := \{ W \in S :\: \|\tfrac W  V -1 \|_\infty < r\}
\ee
form the {base} $\{B_r(V)\}_{r>0, V \in S}$ for a topology on $S$, called the {\em relatively-uniform} topology.
We call $V \in S$ an {\em ordinary limit} if $S$ forms a manifold of dimension $K=\dim \ker L_V$ near $V$,
which the error relative to $V$ embeds differentiably into $L_{p+1}$.  More explicitly, 
call $h$ a {\em stationary relative error} if it is a time-independent solution to \eqref{eq-relativeerror}, or equivalently} 
satisfies the nonlinear (singularly) elliptic equation
\begin{equation}\label{36a}
L_V h = M(h),\quad \mbox{where }M(h) =(1+h)^p-1-ph;
\end{equation}
{here $M(h)=N(h) = M_V(h)$ since $h$ is stationary.} \begin{definition}[Ordinary limit]\label{D3} We say that $V$ is an \emph{ordinary limit} if there exists a constant $\delta\in(0,1)$, an $L^2_{p+1}$-open neighborhood $\U$ of $0$ in $\ker L_V$ and a $C^1$ diffeomorphism 
\[
\Phi_V: (\U,\|\cdot \|_{L^2_{p+1}}) \to (\{ h\in L^\infty \cap \dot H^1_2  :\: L_Vh = M(h),\, \|h\|_{L^{\infty}}\le \delta\},\|\cdot\|_{L^2_{p+1}}),
    \]
with the properties that
\[
\Phi_V(0)=0,\quad (d\Phi_V)_0 = \mathrm{id}.
\]
In this definition, the identity map is the one on $\ker L_V$. 
\end{definition}

{
The point of this definition is the following theorem: 

\begin{theorem}[Second dichotomy: convergence to ordinary limits is fast]  
 \label{T3} {Under the hypotheses of Theorem \ref{thm-dichotomy},}
if a solution to \eqref{eq-U} converges to an ordinary limit $V \in S$, 
 then convergence takes place exponentially fast: i.e., \eqref{eq-fastdcay} holds.
 \end{theorem}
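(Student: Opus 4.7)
The plan is to invoke the first dichotomy (Theorem~\ref{thm-dichotomy}): since its two alternatives are mutually exclusive, it suffices to exclude the slow one. So I would assume for contradiction that $\|h(t)\|_{L^2_{p+1}}\ge(Ct)^{-1}$ for $t\gg 1$ and use the ordinary-limit structure to show that $h(t)\to 0$ exponentially in $L^2_{p+1}$, contradicting this lower bound.

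To set up, I would use $\Phi_V$ to graph $S$ over $\ker L_V$. Because $d\Phi_V(0)=\mathrm{id}$, the map $\pi_c\circ\Phi_V$ is a local $C^1$ diffeomorphism of the finite-dimensional space $\ker L_V$; replacing $\Phi_V$ by $\Phi_V\circ(\pi_c\circ\Phi_V)^{-1}$ (inverse function theorem in $\ker L_V$), we may assume $\pi_c\Phi_V(\eta)=\eta$, so that $\Phi_V(\eta)=\eta+\Psi(\eta)$ with $\Psi(\eta)\in(\ker L_V)^{\perp}$ and $\|\Psi(\eta)\|_{L^2_{p+1}}=o(\|\eta\|_{L^2_{p+1}})$. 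Introduce centered coordinates
\be
h(t)=\Phi_V(\eta(t))+\tilde\zeta(t),\quad \eta(t):=\pi_c h(t),\ \tilde\zeta(t)\in(\ker L_V)^{\perp},
\ee
substitute into the relative-error equation $\p_t h=-L_V h+M_V(h)$, and use the stationary identity $L_V\Phi_V(\eta)=M_V(\Phi_V(\eta))$ coming from \eqref{36a} (recall $M_V=M$ on stationary states). Projecting onto $\ker L_V$ and $(\ker L_V)^{\perp}$ and using $\pi_c D\Phi_V(\eta)=\mathrm{id}$ on $\ker L_V$, I obtain
\bea
\dot\eta &= \pi_c R(\eta,\tilde\zeta),\\
\dot{\tilde\zeta} &= -L_V\tilde\zeta+(I-\pi_c)R(\eta,\tilde\zeta)-D\Psi(\eta)\dot\eta,
\eea
where $R(\eta,\tilde\zeta):=M_V(\Phi_V(\eta)+\tilde\zeta)-M_V(\Phi_V(\eta))$ satisfies the crucial structural identity $R(\eta,0)\equiv 0$, reflecting that the manifold $\{\tilde\zeta=0\}$ consists of stationary solutions. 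A Taylor expansion in $\tilde\zeta$ together with $DM_V(\Phi_V(\eta))=o(1)$ (since $M_V$ is quadratic at the origin) then yields $\|R(\eta,\tilde\zeta)\|=o(\|\tilde\zeta\|)$ as $\|h\|_{L^\infty}\to 0$.

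The convergence $h(t)\to 0$ from \eqref{vre} forces the unstable component of $\tilde\zeta$ to vanish asymptotically (else $E_u$ linearized growth would preclude $h\to 0$), so eventually $\tilde\zeta\in E_s$, on which $\la\phi,L_V\phi\ra_{p+1}\ge\lambda_K\|\phi\|^2_{L^2_{p+1}}$. Testing the $\tilde\zeta$-equation against $\tilde\zeta$ in $L^2_{p+1}$ gives
\be
\fr{d}{dt}\|\tilde\zeta\|^2_{L^2_{p+1}}\le -2\lambda_K\|\tilde\zeta\|^2+o(\|\tilde\zeta\|^2),
\ee
and hence $\|\tilde\zeta(t)\|_{L^2_{p+1}}\lesssim e^{-\lambda t}$ for any $\lambda<\lambda_K$ once $t\gg 1$. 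Feeding this into $|\dot\eta|\lesssim\|\tilde\zeta\|(\|\eta\|+\|\tilde\zeta\|)\lesssim e^{-\lambda t}$ shows $\eta(t)$ is exponentially Cauchy, so $\eta(t)\to\eta_\infty$ at rate $e^{-\lambda t}$; the convergence $h\to 0$ then forces $\eta_\infty=0$ and $\|\eta(t)\|_{L^2_{p+1}}\lesssim e^{-\lambda t}$. Consequently $\|h\|_{L^2_{p+1}}\le\|\eta\|+\|\Psi(\eta)\|+\|\tilde\zeta\|\lesssim e^{-\lambda t}$, contradicting the slow alternative; the exponential alternative \eqref{eq-fastdcay} of Theorem~\ref{thm-dichotomy} must therefore hold, which is the desired conclusion.

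The main hurdles I anticipate are (i) making the bound $\|R\|=o(\|\tilde\zeta\|)$ rigorous in a functional setting simultaneously compatible with $L^2_{p+1}$ energy testing, $L^\infty$ smallness, and the weighted parabolic regularity developed in Section~\ref{S4}---this requires control of $\Phi_V$ and $\Psi$ in the $L^\infty\cap\dot H^1_2$ topology, not merely in $L^2_{p+1}$; and (ii) cleanly excluding the unstable component of $\tilde\zeta$ via the Lyapunov functional $E$ from \eqref{Lyapunov}, so that the spectral gap $\lambda_K$ in the energy dissipation is genuinely available in the new coordinates.
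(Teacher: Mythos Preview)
Your overall plan --- reduce to the first dichotomy and show some exponential decay of $\|h(t)\|_{L^2_{p+1}}$ --- matches the paper's. But the mechanism you propose for the exponential decay has a genuine gap, and the paper's route is quite different.

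The gap is in your treatment of the unstable component of $\tilde\zeta$. You write that $h(t)\to 0$ ``forces the unstable component of $\tilde\zeta$ to vanish asymptotically\dots so eventually $\tilde\zeta\in E_s$.'' That $\tilde\zeta_u(t)\to 0$ is of course true, but your conclusion $\tilde\zeta\in E_s$ for large $t$ is false: $P_u\tilde\zeta$ is generically nonzero for all finite $t$; it is held small by the nonlinear coupling, not by any finite-time extinction. Since $\tilde\zeta\in E_u\oplus E_s$, the quadratic form $\langle L_V\tilde\zeta,\tilde\zeta\rangle_{p+1}$ is indefinite, and the differential inequality $\frac{d}{dt}\|\tilde\zeta\|^2\le -2\lambda_K\|\tilde\zeta\|^2+o(\|\tilde\zeta\|^2)$ is simply not available. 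Neither is the Lyapunov functional $E$ from \eqref{Lyapunov} the right device to exclude unstable drift in these recentred variables. What is needed is again a Merle--Zaag argument: one has to derive differential inequalities for $X=\|P_u\tilde\zeta\|$ and $Z=\|P_s\tilde\zeta\|$ separately and use Lemma~\ref{lem-MZODE3} (now with $Y\equiv 0$) to force $X\lesssim\sigma Z$ before any Gronwall step. Your hurdle (i) is also a real obstruction here: $R(\eta,\tilde\zeta)$ contains a term $(1-(1+h)^{1-p})L_V\tilde\zeta$, and $\|L_V\tilde\zeta\|_{L^2_{p+1}}$ is not controlled by $\|\tilde\zeta\|_{L^2_{p+1}}$; you would need parabolic smoothing for the transversal component $\tilde\zeta$, not just for $h$.

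The paper avoids both difficulties by never introducing the continuous centred coordinates $(\eta,\tilde\zeta)$. Instead it \emph{discretely recenters}: on a time window $[s,s+2\tau]$ it switches the reference profile to a nearby stationary $\tilde V=V(1+g_s)$, applies the Choi--Sun finite-interval Merle--Zaag refinement (Lemma~\ref{lem-choisun}) to bound $\|\tilde h_u\|+\|\tilde h_s\|\lesssim \eps\tau\|\tilde h_c\|+\eps\tau e^{-\tilde\lambda\tau/8}$ in the middle of the window, and then uses $\Phi_{\tilde V}$ at a \emph{single} time to absorb $\tilde h_c(\tau)$ into a new reference $\hat g_\tau$. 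This halves the residual (Proposition~\ref{P24-2}), and iteration gives rate $\gamma=(\log 2)/\tau$. Two additional ingredients make the iteration close: ordinariness is shown to be relatively-uniformly open in $S$ (Proposition~\ref{L23}), and the spectral gap is uniformly bounded below along the recentring sequence (Lemma~\ref{lem-continuity}). Because only the full solution $h$ is ever analyzed, the nonlinearity is estimated directly via Proposition~\ref{P1}, and no regularity theory for a projected variable is needed.
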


Here are some preliminary (rather formal) observations.

\begin{remark}[Ordinary limits have integrable kernels]
\label{R:ordinary limits have integrable kernels}
(1) If $h(s)$ is a smooth curve with $L_Vh(s) = M(h(s))$ and $h(0)=0$, then $\psi = \partial_s h(0) \in \ker L_V$.

(2) If $\psi \in \ker L_V$ consider $h(s)  = \Phi_V(s\psi)$ for $s$ small. Then
\[
L_V h(s) = M(h(s)) ,\quad h(0)=0,\quad \partial_s h(0)=\psi.
\]

\end{remark}

This remark shows the kernel of $L_V$ satisfies the next definition if $V \in S$ is an ordinary limit.

\begin{definition}[{Integrable kernel}]\label{D:integrable kernel}
The kernel of $L_V$ is called {\em integrable} if for each $\varphi \in \ker L_V$, there is a one-parameter family $\{V_s\}_{s\in(-\e,\e)} \subset S$ of solutions to \eqref{eq-V}, with $V_0=V$ and ${\frac{\p_s V(0)}{V}} =\varphi$.
\end{definition}

In the context of minimal surfaces and {geometric evolution equations},  analogous concepts of kernel integrability date 
back at least to Allard-Almgren \cite{AA} and Simon \cite{MR821971} respectively.
 Simon used analyticity to show a converse to Remark~\ref{R:ordinary limits have integrable kernels}.  
Inspired by this,  it is natural to expect integrability of the kernel of $L_V$
to imply that $V$ is an ordinary limit --- though we have not verified this in the present context.
{Note that a limit $V$ can fail to be ordinary in one of three ways: either (a) $S$ can fail to be a manifold nearby;
or (b) $S$ can be a manifold locally which the relative error fails to embed differentiably into $L^2(V^{p+1})$; 
or (c) locally $S$ can be a differentiably embedded manifold whose dimension is strictly less than that of $\ker L_V$.
It is natural to expect that limits which fail to be ordinary are rare:
To the extent that the behaviour of $S$ mimics the stratification and singularities of an analytic variety,  we imagine that (a) occurs only on a set having local codimension one in $S$.  Based on the regularity results we establish below, we are skeptical that (b) ever occurs.  And inspired by the genericity results of, e.g.,~Saut and Teman \cite{SautTemam79},  we conjecture that (c) is non-generic in the sense that there is a dense $G_\delta$ in $S$ on which the kernel of the linearized operator has the same dimension as $S$; i.e. the extra zero modes associated with non-ordinary limits are coincidences whose eigenvalues become non-zero upon perturbation. 
(A weaker conjecture is that a perturbation of $\Omega$ as well as $V$ preserving the dimension of $S$ is required to restore ordinariness.)  It is these expectations that motivate our choice of the term `ordinary'.
}

\label{W}
We finally translate the leading order asymptotics that we found for the relative error back to the  original fast diffusion equation \eqref{eq-fastdiffusion} and its convergence towards the separation-of-variables solution 
\begin{equation}
\label{54}
W(x,\tau) = ((1-m)(T-\tau))^{\frac1{1-m}} V(x)^{\frac1m}.
\end{equation}


{
\begin{theorem}[{Quantitative approach to self-similarity in original variables}]
\label{T2}
For $0<m\in (
{m_{1-\frac n2}},1)$, let $\Omega \subset \R^n$ be a bounded smooth domain and $w(x,\tau) {\ge 0}$ on $(x,\tau)\in \Omega\times [0,\infty)$ be a bounded solution {to the evolution \eqref{eq-fastdiffusion}} converging towards the separation-of-variables solution $W(x,\tau)$ given by \eqref{54}. 
{Then there exists $C=C(p,V)$ such that 
for $\|\frac{w^m}{W^m}-1\|_{L^\infty(\Omega \times[0,T])} $ and $T-\tau>0$ sufficiently small, 
after multiplication by $(T-\tau)^{-\frac 1{1-m}}$ exactly}
 one of the following alternatives holds:  

\begin{enumerate}
\item
the difference $w-W$ decays logarithmically or slower,
\[
{(T-\tau )}^{-\frac1{1-m}}\|w(\tau)-W(\tau) \|_{L^1_1} \ge \frac{1} { C \log^2({1-\frac\tau T})};
\]
\item or 
the difference $w-W$ decays algebraically or faster,
\[
{(T-\tau)}^{-\frac1{1-m}}\|w(\tau)-W(\tau) \|_{L^\infty} {\le  C {\|\frac{w^{m}(0)}{W^{m}(0)}-1 \|_{L^2_{p+1} }}} ({1-\frac\tau T})^{\frac{ m\lambda_{K}}{1-m}}.
\]
\end{enumerate}
\end{theorem}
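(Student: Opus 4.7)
The plan is to translate both alternatives of Theorem \ref{thm-dichotomy} from the rescaled variables $(v,t)$ back to the original $(w,\tau)$ using the change of variables
\[
w(x,\tau) = \big((1-m)(T-\tau)\big)^{\frac{1}{1-m}} v(x,t)^{\frac1m}, \qquad t = \tfrac{m}{1-m}\log\tfrac{T}{T-\tau}.
\]
First I would record the dictionary entries $w^m/W^m = v/V = 1+h$, $e^{-\lambda t}= (1-\tau/T)^{m\lambda/(1-m)}$, and $1/t = (1-m)/(m|\log(1-\tau/T)|)$. In particular, the hypothesis $\|w^m/W^m-1\|_{L^\infty(\Omega\times[0,T])}\ll 1$ coincides exactly with the smallness $\sup_{t\ge 0}\|h(t)\|_{L^\infty}\le \e$ required to invoke Theorem \ref{thm-dichotomy}. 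The difference factors as $w-W = \big((1-m)(T-\tau)\big)^{\frac{1}{1-m}} V^{1/m}\big((1+h)^{1/m}-1\big)$, and a Taylor expansion around $h=0$ yields the two-sided comparison
\[
c|h|\le \big|(1+h)^{1/m}-1\big|\le C|h| \qquad \text{whenever } |h|\le \e.
\]

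For case (2), I apply the exponential bound of Theorem \ref{thm-dichotomy}(2), $\|h(t)\|_{L^\infty}\le C e^{-\lambda_K t}\|h(0)\|_{L^2_{p+1}}$, to the factorization above. Using $V^{1/m}\in L^\infty(\Omega)$ and the dictionary entry for $e^{-\lambda_K t}$ yields
\[
(T-\tau)^{-\frac{1}{1-m}}\|w(\tau)-W(\tau)\|_{L^\infty} \le C'\|h(0)\|_{L^2_{p+1}}\,(1-\tau/T)^{\frac{m\lambda_K}{1-m}},
\]
which is alternative (2) of the present theorem after noting that $h(0)=w^m(0)/W^m(0)-1$.

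For case (1), Theorem \ref{thm-dichotomy}(1) gives $\|h(t)\|_{L^2_{p+1}}\ge 1/(Ct)$. The key observation is the elementary interpolation
\[
\|h\|_{L^2_{p+1}}^{\,2} = \int h^2 V^{p+1}\,dx \le \|h\|_{L^\infty}\,\|h\|_{L^1_{p+1}}\le \e\,\|h\|_{L^1_{p+1}},
\]
which, combined with $\|h\|_{L^2_{p+1}}\ge 1/(Ct)$, gives $\|h\|_{L^1_{p+1}}\ge 1/(\e C^2 t^2)$. Integrating the lower Taylor bound against $V\,dx$ and using $V\cdot V^{1/m}=V^{p+1}$ then yields $\|w-W\|_{L^1_1}\ge c\,(T-\tau)^{\frac{1}{1-m}}\|h\|_{L^1_{p+1}}$. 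Substituting the dictionary entry $1/t^2 = ((1-m)/m)^2/\log^2(1-\tau/T)$ delivers alternative (1).

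The main subtlety --- and the part worth making explicit in the write-up --- is the appearance of the $\log^2$ factor rather than $\log^1$. Theorem \ref{thm-dichotomy}(1) controls $h$ only in the $L^2_{p+1}$ norm, while the natural norm on $w-W$ is the weighted $L^1_1$, corresponding to $L^1_{p+1}$ on $h$. Converting between these two weighted Lebesgue norms costs a factor of $\|h\|_{L^\infty}$ via the interpolation above; this squares the $1/t$ lower bound and so produces the $1/\log^2$ in the statement. All remaining manipulations are routine bookkeeping, so the proof is short once the dictionary is set up.
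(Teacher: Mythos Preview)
Your proof is correct and follows essentially the same route as the paper: both cases are obtained by translating Theorem~\ref{thm-dichotomy} back through the change of variables, and the $\log^2$ in Case~1 arises exactly as you explain, by squaring the $1/t$ lower bound when passing from $L^2_{p+1}$ to $L^1_{p+1}$ via the $L^\infty$ smallness of $h$. The only difference is cosmetic bookkeeping: the paper routes Case~1 through $v-V$, then $w^m-W^m$, and finally invokes the mean value inequality $|a^m-b^m|\le m(a^{m-1}+b^{m-1})|a-b|$ to reach $w-W$, whereas you work directly with the factorization $w-W=\mathrm{const}\cdot(T-\tau)^{1/(1-m)}V^{1/m}\big((1+h)^{1/m}-1\big)$ and the two-sided Taylor bound $|(1+h)^{1/m}-1|\sim|h|$. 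Your route is slightly more streamlined and makes the weight identity $V\cdot V^{1/m}=V^{p+1}$ transparent; the paper's route has the minor advantage of not needing the lower Taylor bound, since it only ever uses upper estimates in the conversion.
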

}
{
\begin{remark}
Analogous estimates hold true for the relative error $\frac{w(\tau)}{W(\tau)}-1$.
\end{remark} 
}

\begin{proof}[Proof, Case 1]
{We start with the logarithmic decay estimate for which we assume that the first case in Theorem \ref{thm-dichotomy} applies. Then
\[
\frac1t \lesssim \|h(t)\|_{L^2_{p+1}} = \|v(t) - V\|_{L^2_{p-1}} \lesssim \|v(t)-V\|_{L^1_{p}}^{\frac12},
\]
where we have used in the second inequality that {$(v-V)^2 = (v+V)(v-V) = (h+2)V (v-V) \lesssim V |v-V|$ by the uniform boundedness of $h$.} Hence, by the definitions of $w$ and $W$ and the relation between $t$ and $\tau$, the latter estimate is equivalent to 
\[
\frac1{\log^2({1-\frac\tau T})} \lesssim (T-\tau)^{-\frac{m}{1-m}} \|w(\tau)^m - W^m(\tau)\|_{L^1_{p}}.
\]
We now use the elementary estimate $
|a^m-b^m| \le m\left(a^{m-1}+b^{m-1}\right)|a-b|$ for $a,b\in \R_+$ to bound
\begin{align*}
|w(\tau)^m-W(\tau)^m|V^{p-1} & \lesssim \left(w(\tau)^{m-1} + W(\tau)^{m-1}\right) V^{p-1} |w(\tau)-W(\tau)|\\
&\quad \lesssim (T-\tau)^{-1} \left(\frac{V^{p-1}}{v(t)^{p-1}}+ 1 \right)|w(\tau)-W(\tau)|.
\end{align*}
Since $V/v = 1/(h+1)$ is uniformly bounded by {hypothesis,}
the above analysis gives
\[
\frac1{\log^2({1-\frac\tau T})} \lesssim  (T-\tau)^{-\frac{1}{1-m}} \|w(\tau) - W(\tau)\|_{L^1_{1}},
\]
as desired.
}

{
\emph{Case 2.} We finally convert the exponential decay estimate on the relative error into an estimate for the fast diffusion equation \eqref{eq-fastdiffusion}. From the second case in Theorem \ref{thm-dichotomy}, the definitions of $W$ and $w$ and the relation between $t$ and $\tau$ we infer
\[ \|h(t)\|_{L^{\infty}} 
  \lesssim {\|\frac{w^m(0)}{W^m(0)}-1 \|_{L^2_{p+1}} } e^{-\lambda_{K} t} 
   \lesssim ({1-\frac\tau T})^{\frac{ m\lambda_{K}}{1-m}}
\]

We shall now invoke the elementary estimate $  |a-1|\lesssim |a^m-1| $ which holds true for $a$ close to $1$ to conclude a control on the relative error,
\[ \|\frac{w(\tau)}{W(\tau)}  -1\|_{L^{\infty}}
\lesssim {\|\frac{w^{m}(0)}{W^{m}(0)}-1 \|_{L^2_{p+1}} }( {1-\frac\tau T})^{\frac{ m\lambda_{K}}{1-m}}  
\]
which yields the desired statement in view of the scaling of $W$ and the boundedness of $V$,
\begin{align}
\|w(\tau)-W(\tau)  \|_{L^{\infty}}
& \le\|W(\tau)\|_{L^{\infty}} \|\frac{w(\tau)-W(\tau)}{W(\tau)}  \|_{L^{\infty}}
\\& \lesssim (T-\tau)^{\frac1{1-m}} \|\frac{w(\tau)}{W(\tau)}-1  \|_{L^{\infty}}.
\end{align}
This concludes the proof of the theorem.
}\end{proof}

\section{Proof {of first dichotomy} (apart from smoothing estimates)}
\label{S3}

We start with an $L^2$   semigroup estimate which is at the heart of our analysis.
\begin{lemma}[Energy growth {control} under nonlinear evolution]
\label{L17}
Let $h $ be a solution to the nonlinear equation {\eqref{eq-relativeerror}--\eqref{eq-problemrelative}} 
with  initial datum $h_{0}\in L^2_{p+1}$, and assume that $\|h\|_{L^{\infty}} \le \eps$ for some $\eps>0$. 
Then there exists $C>0$ such that for $\eps>0$ small enough and all $t>0$, the following holds:
\[
\|h(t)\|_{L^2_{p+1}}^2 +\int_0^t \|\grad h\|_{L^2_2}^2\, dt\le e^{Ct} \|h_0\|_{L^2_{p+1}}^2.
\]
\end{lemma}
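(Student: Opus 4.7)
The natural approach is a weighted energy estimate: test \eqref{eq-relativeerror} in $L^2_{p+1}$ by multiplying with $hV^{p+1}$ and integrating over $\Omega$. The time derivative produces $\tfrac12\tfrac{d}{dt}\|h\|^2_{L^2_{p+1}}$, while the linear contribution, written in the divergence form $L_Vh = -V^{-1-p}\na\cdot(V^2\na h) - (p-1)h$ from \eqref{eqL}, integrates by parts to
\[
\int h L_V h\,V^{p+1}\,dx = \|\na h\|^2_{L^2_2} - (p-1)\|h\|^2_{L^2_{p+1}},
\]
since the weight $V^2$ kills the boundary contribution $-\int_{\p\Omega} hV^2\na h\cdot\nu\,dS$.

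The main task is to handle the nonlinearity $N(h)$, which contains the awkward $\p_t h$. The trick is the identity $N(h)=M_V(h)$ recorded in \eqref{32a}, which trades $\p_t h$ for $L_V h$. Decompose $M_V(h) = A(h) + B(h)L_V h$ with
\[
A(h) := \frac{(1+h)^p - 1 - ph}{(1+h)^{p-1}} = O(h^2), \qquad B(h) := 1 - \frac{1}{(1+h)^{p-1}} = O(h).
\]
The pointwise bound $\|h\|_{L^\infty}\le\eps$ gives $|\int h A(h)\,V^{p+1}\,dx|\le C\eps\|h\|^2_{L^2_{p+1}}$ directly. For the remaining piece, set $g(h):= h B(h) = O(h^2)$, and integrate by parts once more against the divergence form of $L_V$ to obtain
\[
\int g(h)\, L_V h\, V^{p+1}\,dx = \int V^2 g'(h)\,|\na h|^2\,dx - (p-1)\int g(h)\,h\,V^{p+1}\,dx,
\]
where again the boundary term vanishes because of the $V^2$ weight. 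Since $|g'(h)|\lesssim |h|\le\eps$ and $|g(h)h|\lesssim \eps h^2$, this contribution is bounded in absolute value by $C\eps\bigl(\|\na h\|^2_{L^2_2} + \|h\|^2_{L^2_{p+1}}\bigr)$.

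Collecting the pieces yields
\[
\tfrac12\tfrac{d}{dt}\|h\|^2_{L^2_{p+1}} + (1-C\eps)\|\na h\|^2_{L^2_2} \le (p-1+C\eps)\,\|h\|^2_{L^2_{p+1}}.
\]
Choosing $\eps$ small enough that $1-C\eps\ge\tfrac12$, dropping the gradient term on the left, and applying Gr\"onwall gives $\|h(t)\|^2_{L^2_{p+1}} \le e^{Ct}\|h_0\|^2_{L^2_{p+1}}$; feeding this back into the integrated form of the differential inequality recovers the claimed control on $\int_0^t\|\na h\|^2_{L^2_2}\,ds$ as well. Note that the expansions of $A$ and $B$ depend only on $p$, so the smallness threshold on $\eps$ is independent of $V$.

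The only genuine obstacle is the rigorous justification of the two integrations by parts. Although the boundary terms vanish formally because of the $V^2$ weight (together with the Hopf-lemma vanishing $V\asymp \dist(\cdot,\p\Omega)$), one needs enough regularity for quantities like $\int V^2|\na h|^2\,dx$ to be finite. This is expected to come from the parabolic smoothing/regularity estimates postponed to Section~\ref{S4}; alternatively the identity can first be established for smooth approximating solutions (e.g.\ by regularising the initial datum or truncating $V$ away from the boundary) and passed to the limit, which is routine since the final bound involves only $L^2_{p+1}$ and $L^2_2$ norms.
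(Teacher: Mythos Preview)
Your proof is correct and follows essentially the same approach as the paper: both test the equation against $hV^{p+1}$, replace $N(h)$ by the spatial form $M_V(h)$ from \eqref{32a}, integrate by parts via the divergence form of $L_V$ (once for the linear term and once for the $B(h)L_Vh$ contribution), and then absorb the $\eps$-small gradient terms before applying Gronwall. The paper presents the resulting identity in one line without isolating $g(h)=hB(h)$ explicitly, but the computation is the same; your added remarks on the smallness threshold and on justifying the boundary terms are reasonable side comments that the paper omits.
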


\begin{proof}
It will be convenient to consider the purely spatial form \eqref{32a} of the nonlinearity $N(u) =M_V(u)$ when working with {\eqref{eq-relativeerror}--\eqref{eq-problemrelative}}. Then setting $d\mu_p(x):=V(x)^p dx$, testing {\eqref{eq-relativeerror}} with $h V^{p+1}$ and integrating by parts, we arrive at the identity
\begin{align*}
\MoveEqLeft\frac12\frac{d}{dt} \int h^2 \, d\mu_{p+1} + \int |\grad h|^2\, d\mu_2 \\
=& (p-1) \int h^2\, d\mu_{p+1} +  \int \frac{h}{(1+h)^{p-1}}\left((1+h)^p-1-ph\right)\, d\mu_{p+1} \\
& +\int \left(\left(\frac1{1+h}\right)^{p-1}-1 \right) |\grad h|^2\, d\mu_2 + (p-1) \int h \left(\frac1{1+h}\right)^{p} |\grad h|^2\,d\mu_2\\
& +(p-1) \int \left(\left(\frac1{1+h}\right)^{p-1}-1\right) h^2\, d\mu_{p+1}.
\end{align*}
Because $|h|\le \eps<1$, the right-hand side can be controlled by quadratic expressions, more precisely,
\[
\frac{d}{dt} \int h^2 \, d\mu_{p+1} + \int |\grad  h|^2 \, d\mu_2 \lesssim \int h^2\, d\mu_{p+1}+\eps \int |\grad  h|^2 \, d\mu_2 .
\]
If $\eps$ is sufficiently small, the gradient terms can be absorved into the right hand side, and the resulting estimate can be solved with the help of a standard Gronwall argument. This proves the lemma.
\end{proof}

The proof of our {first dichotomy}, Theorem \ref{thm-dichotomy}, is based on two main ingredients. On the one hand, our argument will rely on a fundamental dynamical systems result that is due to Merle and Zaag \cite{MerleZaag98},  and which in turn improves on an earlier related result by  Filippas and Kohn \cite{FilippasKohn92}. On the other hand, we have to exploit some smoothing properties of the parabolic equation.

The Merle--Zaag lemma is concerned with a system of weakly coupled first order ordinary differential equations featuring  stable, neutral and unstable solutions. It states that under the assumption that
 the unstable modes fail to grow, the long time asymptotics are dominated by precisely one of the other two modes.
 {This lemma provides an effective way to extract the {quantized behaviour of the solution prescribed by the discrete spectrum of its limit.} It plays a {pivotal}  role in recent progress on classifications of ancient solutions (solutions defined for $(-\infty,T]$) to parabolic equations \cite{angenent2020uniqueness, angenent2019unique, brendle2019uniqueness, choi2019ancient} and entire solutions to elliptic equations \cite{CCK} arising from geometry. In classifications of ancient flows, the lemma is applied backward in time (i.e.~$t=-s$). One advantage in backward problems is that there are only finitely many stable eigenfunctions (imagine, for instance, the laplacian $\Delta$ has only finitely many positive eigenfunctions); this makes the classifications possible. Meanwhile in the forward problem, there are infinitely many stable eigenfunctions. The lemma is used to investigate the asymptotic behavior of solutions.  
{To obtain the stated dependencies of the constants in our first dichotomy requires a refinement of the Merle-Zaag 
Lemma due to K.~Choi et al:}

\begin{lemma}[{Choi-Haslhofer-Hershkovits refinement \cite[Lemma 4.6]{ChoiHaslhoferHershkovits18+}}] \label{lem-MZODE3}
	Let $X(s)$, $Y(s)$, and $Z(s)$ be non-negative absolutely continuous functions on $[0,\infty)$ satisfying  $X+Y+Z >0$, 
	\begin{equation} \label{eq:mz.ode.system}
		\begin{aligned}
			\frac{dX}{ds} -X &\geq  -\eps(Y + Z), \\
			|\frac{dY}{ds}| &\leq \eps (X+Y+Z ), \mbox{and}\\
			\frac{dZ}{ds} + Z &\leq  \eps(X+Y)
		\end{aligned}
	\end{equation}
	for each $\e\in (0, \frac{1}{100})$ and a.e. $s\in  [s_0(\e),\infty)$. If
	\begin{equation} \label{eq:mz.ode.nonzero}
		\lim_{s\to \infty} (X + Y + Z)(s) = 0
	\end{equation}	
	then  $ X \le 2\eps (Y+Z)$ for $s\ge s_0(\e)$ and 
	either 
	\begin{equation} \label{eq:mz.ode.A}
X(s)+Z(s)= o(Y(s)) \text{ as } s\to \infty 
	\end{equation}or 
	\begin{equation} \label{eq:mz.ode.B}
	X(s)+Y(s)  \le 100 \e  Z(s)  \text{ for } s\ge s_0(\e).
	\end{equation}
	\end{lemma}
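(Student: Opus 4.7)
The plan is to follow the classical Merle--Zaag strategy \cite{MerleZaag98} in two stages: first control the unstable mode $X$ by $\epsilon(Y+Z)$, then set up a competition between the neutral and stable modes to extract the dichotomy \eqref{eq:mz.ode.A}/\eqref{eq:mz.ode.B}. The driving intuition is that the exponential growth encoded in $X' \ge X - \epsilon(Y+Z)$ and the exponential decay encoded in $Z' \le -Z + \epsilon(X+Y)$ are mutually incompatible with the decay hypothesis \eqref{eq:mz.ode.nonzero}, which forces $X$ to be asymptotically dominated and one of $Y$, $Z$ to win.

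For the bound on $X$, I would introduce the auxiliary quantity $\phi := X - 2\epsilon(Y+Z)$ and combine the three inequalities in \eqref{eq:mz.ode.system} to produce a lower bound of the form $\phi' \ge \tfrac{1}{2}\phi + (\text{nonnegative terms in }Y,Z)$ on the set $\{\phi > 0\}$. By continuity, any $s_*$ with $\phi(s_*)>0$ would launch an exponentially growing trajectory $\phi(s) \ge \phi(s_*) e^{(s-s_*)/2}$, contradicting $\phi \le X \to 0$ from \eqref{eq:mz.ode.nonzero}. Hence $\phi \le 0$ on $[s_0(\epsilon),\infty)$, which is the first assertion.

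For the dichotomy I would compare $Y$ and $Z$ through the Lyapunov function $F := Y - c\epsilon Z$ for a suitable constant $c$ (working with $F$ rather than the ratio $Y/Z$ avoids technicalities when $Z$ vanishes). A direct computation combining $|Y'|\le\epsilon(X+Y+Z)$, $Z' \le -Z + \epsilon(X+Y)$, and the bound $X \le 2\epsilon(Y+Z)$ from the previous step yields $F' \ge -O(\epsilon)\,F + (\text{positive})$ whenever $F > 0$, while $F$ remains exponentially controlled whenever $F \le 0$. The analysis splits cleanly into two cases. Either $F \le 0$ for all large $s$, so that $Y \le c\epsilon Z$ combines with Step~1 to give $X + Y \le 100\epsilon Z$ (upon choosing $c$ just below the budget allowed by the constant $100$), giving alternative \eqref{eq:mz.ode.B}. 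Or $F(s_*) > 0$ at some $s_*$, after which the differential inequality for $F$, together with the continuing decay of $Z$, forces $Y/Z \to \infty$, hence $Z/Y \to 0$. To upgrade the latter into the full statement \eqref{eq:mz.ode.A}, namely $(X+Z)/Y \to 0$, I would exploit the fact that \eqref{eq:mz.ode.system} is assumed to hold for \emph{every} $\epsilon \in (0, 1/100)$: applying Step~1 with a smaller parameter $\epsilon'$ yields $X/Y \le 2\epsilon'(1 + Z/Y)$ eventually, and sending $\epsilon' \downarrow 0$ gives $X/Y \to 0$.

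The main obstacle is the careful bookkeeping of constants so that the threshold appearing in the Lyapunov function $F$ translates precisely into the coefficient $100\epsilon$ in \eqref{eq:mz.ode.B}; tracking the $O(\epsilon^2)$ contamination produced by $X$ in both steps is the delicate part of the calculation and dictates the explicit numerical constants $2$ and $100$ that appear in the statement. A secondary subtlety is verifying that the argument survives the weak regularity assumed on $X,Y,Z$ (absolute continuity only), so that the differential inequalities for $\phi$ and $F$ are justified a.e.\ and the pointwise monotonicity arguments are applied correctly.
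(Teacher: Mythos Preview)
The paper does not give its own proof of this lemma; it simply refers to Appendix~B of \cite{choi2019ancient}. So there is no in-paper argument to compare your proposal against, and your outline is in fact the standard Merle--Zaag route that the cited reference follows.

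Your two-step plan is essentially correct. Step~1 via $\phi = X - 2\epsilon(Y+Z)$ is exactly the classical argument and goes through as you describe. In Step~2 the barrier $F = Y - c\epsilon Z$ does satisfy a differential inequality of the form $F' \ge -O(\epsilon)F + \kappa\epsilon Z$ with $\kappa \approx c-1 > 0$, so once $F(s_*)>0$ it stays positive, and your case split (either $F\le 0$ on all of $[s_0(\epsilon),\infty)$, giving \eqref{eq:mz.ode.B}, or $F>0$ eventually) is the right dichotomy. The one step that is understated is the passage from ``$F>0$ persists'' to ``$Z/Y \to 0$'': persistence only yields the bounded ratio $Z/Y < 1/(c\epsilon)$, and the phrase ``continuing decay of $Z$'' does not close this (since $Y$ decays too). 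What is actually needed is a differential inequality for the ratio $r=Z/Y$: once $r$ is bounded, the system at level $\epsilon'$ gives $r' \le -\tfrac12 r + O(\epsilon')$, hence $\limsup r = O(\epsilon')$, and \emph{then} sending $\epsilon'\downarrow 0$ yields $r\to 0$. Your $\epsilon$-varying device is the right tool, but it must be applied to $Z/Y$ and not only to $X/Y$. With that clarification the argument is complete and the constants $2$ and $100$ fall out as you anticipate.
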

	
	\begin{proof}
	A proof is given in Appendix B of \cite{choi2019ancient}.
	\end{proof}

{{For later use {we also recall a quantitative adaptation of the Merle Zaag lemma} to a compact time interval proved by Choi and Sun in \cite{ChoiSun20+}.
In Section \ref{S:second dichotomy} their result} will be {motivated and} used to prove our second dichotomy.
}

\begin{lemma}[{Choi-Sun refinement \cite[Lemma B.2]{ChoiSun20+}}]
 \label{lem-choisun} Suppose $X(s)$, $Y(s)$, and $Z(s)$ are non-negative absolutely continuous functions on some interval $[-L,L]$ such that $0<X+Y+Z< \eta$ for some $\eta>0$. Suppose that there exist two constants $\sigma >0$ and $\Lambda>0$ such that
\bea \frac{dX}{ds} - \Lambda X  & \ge   - \sigma (Y+Z), \\
 |\frac{dY}{ds}| &\le   \sigma (X+Y+Z),\\
  \frac{dZ}{ds} + \Lambda Z  & \le  \sigma(X+Y) ,
  \eea 
for any $s\in[-L,L]$. Then there exists {$\sigma_0=\sigma_0(\Lambda)$} such that  if $0<\sigma<\sigma_0$ it holds 
\bea X+Z \le \frac{8\sigma}{\Lambda} Y + 4\eta e^{-\frac{\Lambda L}{4}} \mbox{ for  any } s\in[-L/2,L/2].\eea	
\end{lemma}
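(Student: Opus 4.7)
My plan is to combine a Duhamel representation with a Neumann-series iteration, exploiting the slow variation of the central mode $Y$ relative to the exponential kernel's natural time scale $1/\Lambda$.

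First, I would integrate the first inequality $X' - \Lambda X \ge -\sigma(Y+Z)$ backward from $L$ using the integrating factor $e^{-\Lambda s}$, and the third inequality $Z' + \Lambda Z \le \sigma(X+Y)$ forward from $-L$ using $e^{\Lambda s}$. With $X(L), Z(-L) \le \eta$ this yields, for $s\in[-L,L]$,
\begin{align*}
Z(s) &\le \eta e^{-\Lambda(s+L)} + \sigma \int_{-L}^{s} e^{-\Lambda(s-t)}(X+Y)(t)\, dt,\\
X(s) &\le \eta e^{-\Lambda(L-s)} + \sigma \int_{s}^{L} e^{-\Lambda(t-s)}(Y+Z)(t)\, dt.
\end{align*}
For $s\in[-L/2,L/2]$, each boundary term is at most $\eta e^{-\Lambda L/2}\le \eta e^{-\Lambda L/4}$. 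The middle inequality $|Y'|\le \sigma(X+Y+Z)\le \sigma\eta$ delivers the slow-variation bound $|Y(t)-Y(s)|\le \sigma\eta|t-s|$, whence
\begin{equation*}
\sigma\int_{-L}^{s} e^{-\Lambda(s-t)} Y(t)\, dt \le \frac{\sigma Y(s)}{\Lambda} + \sigma^2\eta\int_{0}^{\infty} u e^{-\Lambda u}\, du = \frac{\sigma Y(s)}{\Lambda} + \frac{\sigma^2 \eta}{\Lambda^2},
\end{equation*}
and analogously for the symmetric integral.

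Next I would substitute the Duhamel bound for $X$ into the integral defining $Z$ (and vice versa), generating a Neumann series in the convolution-type operator $Tf(s):= \sigma \int e^{-\Lambda|s-t|}f(t)\, dt$, whose $L^\infty$ operator norm is at most $2\sigma/\Lambda$. For $\sigma< \sigma_0(\Lambda):= \Lambda/16$, say, this series converges geometrically, and summing termwise produces a bound of the form
\begin{equation*}
X(s)+Z(s) \le C_1\frac{\sigma Y(s)}{\Lambda} + C_2 \eta e^{-\Lambda L/4} + C_3\frac{\sigma^2 \eta}{\Lambda^2}
\end{equation*}
on $[-L/2,L/2]$ with universal constants $C_1,C_2,C_3$.

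The hard part will be eliminating the residual $C_3\sigma^2\eta/\Lambda^2$ correction, which does not decay with $L$ and would spoil the target bound when $Y(s)$ is small. To handle this I plan to run a self-consistent bootstrap: the preliminary estimate supplies an a priori control $X+Z\le \alpha Y + \beta\eta e^{-\Lambda L/4}$, which I reinsert into $|Y'|\le \sigma(X+Y+Z) \le \sigma(1+\alpha)Y + \sigma\beta\eta e^{-\Lambda L/4}$. This sharpened variation bound produces a refined $Y$-integral correction proportional to $\sigma Y$ (absorbable into the main term) plus $\sigma\eta e^{-\Lambda L/4}$ (absorbable into the error term), provided $\sigma/\Lambda$ is sufficiently small. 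One last pass through the Neumann iteration, with constants optimized over this bootstrap, yields the sharp values $C_1=8$ and $C_2=4$.
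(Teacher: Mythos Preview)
The paper does not provide its own proof of this lemma; it merely quotes the statement and cites the original source (Choi--Sun, Lemma~B.2). There is therefore no ``paper's proof'' to compare your proposal against.

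As for the proposal itself: your Duhamel-and-iteration scheme is the standard and correct skeleton for such ODE lemmas, and the first two steps (integrating $X$ backward and $Z$ forward, then summing a Neumann series with ratio $2\sigma/\Lambda$) are clean. The weak point is the bootstrap you invoke to remove the residual $C_3\sigma^2\eta/\Lambda^2$. Your preliminary bound $X+Z\le \alpha Y+\beta\eta e^{-\Lambda L/4}+C_3\sigma^2\eta/\Lambda^2$ is only valid on $[-L/2,L/2]$, while the Duhamel integrals you want to feed it back into run over all of $[-L,L]$; near the endpoints of $[-L/2,L/2]$ the exponential kernel still assigns nontrivial weight to the region where only the crude bound $X+Z\le\eta$ is available. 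A working version of your idea would iterate on a nested sequence of subintervals (losing, say, a fixed fraction of $L$ at each step) so that each Duhamel pass sees the improved bound on most of its domain and only an exponentially small tail elsewhere. This does close, but the bookkeeping is heavier than your last paragraph suggests, and the assertion that the constants come out to exactly $8$ and $4$ is not substantiated by what you have written---it would require tracking all geometric series carefully.
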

}

{The next proposition provides the crucial control that we need to estimate the nonlinear terms in the relative error dynamics \eqref{eq-relativeerror}  quadratically:}

\begin{prop}[Spatially uniform control of time derivatives]
\label{P1}
Let $k\in \N_0$ and $t>0$ fixed. Then if $\|h\|_{L^{\infty}}\le \eps$ with $\eps$ sufficiently small, there exists a constant $C=C(t,k{,m,V})$ such that
\[
\|\partial_t^{k}  h(t) \|_{L^{\infty}} \le C \|h_0\|_{L^2_{p+1}}.
\]
\end{prop}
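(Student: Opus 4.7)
My plan is to combine the basic $L^2_{p+1}$ energy growth bound of Lemma \ref{L17} with the weighted parabolic smoothing estimates developed in Section \ref{S4}, iterated upward in both spatial and temporal regularity. For $k=0$, I would rewrite \eqref{eq-relativeerror} as $\partial_t h + L_V h = M_V(h)$ using the purely spatial form \eqref{32a} of the nonlinearity. The operator $L_V h = -V^{-1-p}\mathrm{div}(V^2 \nabla h) - (p-1)h$ is in weighted divergence form with weight $V^2$ that degenerates like $\dist(\cdot,\partial\Omega)^2$, while $M_V(h)$ splits into a genuinely quadratic part $(1+h)^{-(p-1)}((1+h)^p - 1 - p h) = O(h^2)$ and a principal-order part $(1-(1+h)^{-(p-1)})L_V h$ whose coefficient is $O(\|h\|_{L^\infty})$ and therefore absorbable into the left-hand side when $\eps$ is small. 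A weighted Moser / De Giorgi iteration against the measures $d\mu_{p+1}=V^{p+1}dx$ and $d\mu_2=V^2 dx$, together with the Hopf-type bound $V\gtrsim \dist(\cdot,\partial\Omega)$, then upgrades Lemma \ref{L17} to $\|h(t)\|_{L^\infty} \le C(t)\|h_0\|_{L^2_{p+1}}$; this is exactly what Section \ref{S4} is designed to deliver.

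For $k \ge 1$ I would trade time derivatives for spatial ones. Solving the relation $\partial_t h + L_V h = N(h)$ algebraically for $\partial_t h$ yields
\[
\partial_t h \;=\; (1+h)^{-(p-1)}\Bigl[-L_V h + (1+h)^p - 1 - p h\Bigr],
\]
and iterating this identity expresses $\partial_t^k h$ as a polynomial in $h$ and its spatial derivatives up to order $2k$, with coefficients that are smooth in $h$ (bounded for $\|h\|_{L^\infty}<1$) and involve negative powers of $V$. Hence controlling $\|\partial_t^k h(t)\|_{L^\infty}$ reduces to controlling $\|\nabla^{j} h(t)\|_{L^\infty}$ in a suitably $V$-weighted sense for $j \le 2k$. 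These higher spatial estimates are obtained by a standard bootstrap: at each step differentiate the equation in space, treat the resulting quadratic products as lower-order interactions already controlled by the previous step (using the $L^\infty$-smallness of $h$ to keep them subcritical), and apply the weighted interior/boundary Schauder-type estimates of Section \ref{S4} on a slightly shorter time interval. After finitely many iterations any prescribed order of regularity is reached, with $C$ depending on $(t,k,m,V)$ but finite for every fixed $t>0$.

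The main obstacle is the boundary degeneracy. Because $V^2$ vanishes quadratically on $\partial\Omega$ and negative powers of $V$ appear in the identity for $\partial_t h$, classical parabolic regularity does not apply off the shelf, and any regularity scheme must remain in control as $\dist(\cdot,\partial\Omega)\to 0$. The estimates of Section \ref{S4} are tuned to exactly this geometry: weighted Hardy and Sobolev inequalities compatible with the measures $d\mu_\sigma$ allow the Moser iteration to close in a way that respects the boundary behaviour of $V$. Once those estimates are in place, the induction on $k$ sketched above is essentially mechanical, and yields the stated bound.
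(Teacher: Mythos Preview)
Your strategy for $k\ge 1$ has a genuine gap at the boundary. When you express $\partial_t^k h$ as a combination of spatial derivatives of $h$ up to order $2k$ with coefficients carrying negative powers of $V$, the individual summands are singular near $\partial\Omega$: already one iteration produces terms of order $V^{-2p}\nabla^2 h$ and $V^{-2p-1}\nabla h$, whose separate $L^\infty$-finiteness would require decay of $\nabla h,\nabla^2 h$ far beyond anything the equation delivers. The boundedness of $\partial_t^k h$ reflects cancellations among such summands that a term-by-term estimate throws away. Your proposed bootstrap ``differentiate the equation in space and reapply Section~\ref{S4}'' is likewise unavailable: Section~\ref{S4} never differentiates in the normal direction, because doing so destroys the weighted divergence structure $V^{-1-p}\div(V^2\nabla\,\cdot\,)$, and---as remarked just after \eqref{200}---transversal regularity of $h$ is genuinely limited for non-integer $p$, so high-order normal derivatives need not even exist.

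The paper proceeds oppositely: time regularity is obtained \emph{directly}, never by conversion to high-order spatial regularity. After flattening the boundary one differentiates only in the ``good'' variables $z=(t,\hat x')$; Lemma~\ref{L14} shows tangential derivatives of $\hat V$ still vanish linearly, so $z$-differentiation preserves the operator's structure (Lemma~\ref{L10}). Iterating the maximal-regularity estimate of Proposition~\ref{P5} in the $z$-directions---with the nonlinearity controlled via the interpolation Lemma~\ref{L16}, packaged as Lemma~\ref{L11}---yields weighted $L^2$ bounds on $D_z^k\hat\nabla\hat H$ and $D_z^k\hat\nabla^2\hat H$ for every $k$ (Proposition~\ref{P20}). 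A Sobolev embedding in the $n$ good variables $z$, combined with a one-dimensional Hardy integration in $\hat x_n$, then gives the $L^\infty$ bound on $D_z^k\hat H$ (Corollary~\ref{C12}). Throughout, at most two normal derivatives are ever taken.
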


We postpone the proof and a discussion of this proposition to the next subsection and show first how to deduce 
{our first dichotomy} from Lemmas \ref{L17}--\ref{lem-MZODE3} and Proposition~\ref{P1}.

\begin{proof} [Proof of Theorem \ref{thm-dichotomy}]
	{In this proof}, for the sake of convention, we omit the subscript $L^2_{p+1}=L^2(V^{p+1})$ and use
$ \Vert f \Vert: = \Vert f \Vert_{L^2_{p+1}} $. Positive constants $\e =\e (V,p)$ and $C=C(V,p)$ are not fixed yet, $\e$ may become smaller, and $C$ may become larger until they are fixed.

Let us denote by $P_s$, $P_c$ and $P_u$ the  orthogonal projections of $L^2_{p+1}$ onto the stable, center, and unstable  eigenspaces $E_s$, $E_c$ and $E_u$, respectively. Moreover, we write $h_s=P_sh$, $h_c=P_ch$, and $h_u=P_uh$ for the projected solutions.   A straightforward computation reveals that
\bea \label{eq-odelema0}\frac{d}{dt} \Vert h_u\Vert_{\normm} &\ge -\lambda_{{-1}} \Vert h_u \Vert _{\normm}- \Vert {N}(h) \Vert _{\normm},\\
\left| \frac{d}{dt} \Vert h_c \Vert_{\normm} \right| &\le  \Vert {N}(h) \Vert_{\normm}, \\
\frac{d}{dt}\Vert h_s \Vert_{\normm} &\le -\lambda_{{K}}  \Vert h_s \Vert_{\normm} +  \Vert {N}(h) \Vert_{\normm}    ,  \eea 
where we recall $\lambda_{{-1}}$ and $\lambda_{{K}}$ are the negative and positive eigenvalues of $L {=L_V}$ closest to zero. As an example, for the case of the evolution on the stable subspace, we observe that
\begin{equation}\label{projected evolution}
\partial_t h_s + L h_s = P_s{N}(h),
\end{equation}
and testing with $h_s V^{p+1}$ gives
\[
\frac12\frac{d}{dt} \|h_s\|^2 + \la h_s, Lh_s\ra  = \la h_s ,N(h)\ra.
\]
The third estimate in \eqref{eq-odelema0} now results from the lower bound on the stable eigenvalues and the Cauchy--Schwarz inequality. The first and the second estimates are derived analogously.

In order to estimate the nonlinearities  in \eqref{eq-odelema0}, we note that {for $|h|  \le \e $,} (provided that $\e$ is sufficiently small) there is $C_1=C_1(p)$ such that 
	\bea \label{51} |{N}(h)| &\le C_1 |h|  \left(\left|h\right| + \left|\partial_t h\right|\right) 
	\\&{\le C_1 |h|  \left(\left|h\right| + C\left\| h(t-1)
	\right\|\right)} 
		\eea 
by Taylor expansion followed by the smoothing estimates of Proposition~\ref{P1} for $t \ge 1$.
By virtue of Bonforte, Grillo and V\'azquez'  \cite{BonforteGrilloVazquez12} uniform bound on the relative error  \eqref{eq-asymptoticrelative}, for \emph{any} small ${\hat \e}>0$ there exists a time $t_0(\hat \e)$  such that \bea \label{50}\Vert {N}(h(t))\Vert_{\normm} \le {\hat \e} \Vert h(t)\Vert_{\normm} \text{ for any }t\ge t_0(\hat \e).\eea 
Plugging this estimate into \eqref{eq-odelema0}, we observe that if $h\neq0$, then $h_s$, $h_c$ and $h_u$ satisfy the hypothesis  Lemma   \ref{lem-MZODE3} {with $s=\lambda t$} where $\lambda := \frac{1}{2} \min (|\lambda_{-1}|, |\lambda_K|)$. Therefore, unless $h$ is stationary (and thus trivial), either
 \bea\label{eq-neutraldominate} \Vert h_u\Vert_{\normm}+\Vert h_s\Vert_{\normm} = o(\Vert h_c\Vert_{\normm}) \eea
or 
\bea\label{eq-stabledominate} \Vert h_u\Vert_{\normm}+\Vert h_c\Vert_{\normm} \le  \frac{100 \hat \e}{\lambda} \Vert h_s \Vert \text{ for } t\ge t_0(\hat \e)  .\eea Note in particular, the second alternative \eqref{eq-stabledominate} implies $\Vert h_u\Vert + \Vert h_c \Vert = o (\Vert h_s\Vert)$. We discuss the implications of each case individually, starting from the latter case \eqref{eq-stabledominate}.

 \noindent {\bf{Case 2.}}
First, by choosing  $\e =\e(V,p)$ sufficiently small, we may assume $\Vert h_u\Vert + \Vert h_c \Vert \le \frac 12 \Vert h_s \Vert $ for all $t \ge1$. Here, note that $t\ge 1$ is needed as Proposition~\ref{P1} is applied to estimate ${N}(h)$. Moreover, by the same argument we used to derive \eqref{50}, there is $C_2=C_2(V,p)$ such that 
\[ \Vert N(h(t))\Vert  \le C_2 \e \Vert h(t)\Vert \text{ for } t\ge 1. \]

Combining above,  the estimate of the nonlinearity \eqref{50} becomes
\[
\|N(h(t))\| \le C_2 {\eps} \|h(t)\|   \le 2C_2 \eps \|h_s(t)\|,
\]
for $t\ge 1$. The third estimate in   \eqref{eq-odelema0} thus turns into the differential inequality
\[
\frac{d}{dt} \|h_s\| \le - (\lambda_{K}-2C_2\eps) \|h_s\| ,
\]
for $t\ge 1$, which yields  via a Gronwall argument 
 \[
 \|h_s(t)\| \le  e^{-(\lambda_{K}-2C_2\eps) (t-1) } \|h_s(1)\| .
 \]By another   application of \eqref{eq-stabledominate} and the semigroup estimate from Lemma \ref{L17}, this estimate can be translated into the full solution
 \begin{equation}\label{52}
 \|h(t)\| \lesssim e^{-(\lambda_{K}-2C_2 \eps) t} \|h_0\|.
 \end{equation}

To {improve} this inequality by removing the $\eps$, fix $\eps$ small so that $ 2C_ 2 \e  <{\frac 12}\lambda_{K}$, and refine the estimate of the nonlinearity with the help of the quadratic bound \eqref{51}, the smoothing estimates from Proposition \ref{P1} and the previous estimate \eqref{52},
 \begin{align*}
 \|N(h(t))\| & \lesssim \left(\|h(t)\|_{L^{\infty}} + \|\partial_t h(t)\|_{L^{\infty}}\right)\|h(t)\| \\
 & \lesssim \|h(t-1)\|\|h(t)\| \\
 & \lesssim e^{-2(\lambda_{K}-2C_2 \eps) t} \|h_0\|^2,
 \end{align*}
 for $t\ge1$. Substitution into the third estimate of \eqref{eq-odelema0} finally establishes a differential inequality, which yields
  \begin{equation}\label{53}
 \|h(t)\| \lesssim e^{-\lambda_{K} t} \|h_0\|,
 \end{equation}
 via \eqref{eq-stabledominate}, for all $t \ge 0$ {(recalling $\|h_1\| \lesssim \|h_0\|$ from Lemma \ref{L17})}. By another application of Proposition~\ref{P1}, this estimate can be upgraded towards a uniform convergence result,
  \bea  \|h(t)\|_{L^{\infty}} \lesssim e^{-\lambda_{K} t} \|h_0\|, \eea for $t\ge1$.  
Having established \eqref{eq-fastdcay}, it remains to identify the leading order expansion \eqref{eq-fastdcay'}.
Let us begin by noting that whenever \eqref{eq-fastdcay} holds for some $\lambda \ge \lambda_K$
{and any $C$,} repeating the {previous estimate on the nonlinearity} 
yields the improvement
\begin{equation}\label{52a}
{ \|N(h(t))\| {\le \tilde C} e^{-2\lambda t} \|h_0\|^2}
 \end{equation} 
{for $t \ge 1$, where $\tilde C= \tilde C(V,p,\lambda, C)$.}
Since the nonlinearity is quadratic,  modes corresponding to eigenvalues in the interval $[0,2\lambda)$   are accessible via a simple ODE argument. For any $i$, the projection $y_i = \langle h,\varphi_i\rangle$ satisfies the differential inequality
 \begin{equation}\label{52b} \left|\frac{d}{dt} y_i+ \lambda_i y_i \right| \le \Vert {N}(h) \Vert_{\normm} ,
 \end{equation}
which can be rewritten as
\[
\left |\frac{d}{dt}(e^{\lambda_it}y_i)\right| \lesssim e^{-(2\lambda-\lambda_i)t},
\]
as a consequence of \eqref{52a}. Integration in time thus yields for any  $T\ge t {\ge 1}$ that
\[
\left| e^{\lambda_i t}y_i(t) - e^{\lambda_i T}y_i(T)\right| \lesssim e^{-(2\lambda- \lambda_i)t},
\]
where we have used the fact that $\lambda_i<2\lambda$. This bounds implies that $T \mapsto e^{\lambda_i T }y_i(T)$ is a Cauchy sequence, so that, sending $T\to\infty$, we find 
 \bea \label{eq-yi} \langle h(t),\varphi_i\rangle = y_i(t) = C_i e^{-\lambda_i t} + O(e^{-2\lambda t} ) \eea 
for some $C_i\in \R$.

We now estimate with the help of the decomposition $h = h_s+h_c+h_u$, the triangle inequality and the fact that the eigenfunctions $\varphi_{0},\dots,\varphi_J$ are orthonormal that
\begin{equation}\label{52f}
\|h - \sum_{i=0}^J C_i e^{-\lambda_i t} \varphi_i\| \le \sum_{i= 0}^J |y_i - C_i e^{-t\lambda_i}| + \|h_s - \sum_{i=0}^J y_i \varphi_i\| + \|h_c\| + \|h_u\|.
\end{equation}
We have just seen that the first term on the right-hand side is {$\lesssim e^{-2\lambda t}$.} 
{Recalling the spectral decomposition of $L$ and \eqref{projected evolution}}, 
the next contribution, $z=\|h_s - \sum_{i=1}^J y_i \varphi_i\|$ satisfies the differential inequality
 \begin{equation}\label{52c}
 \frac{d}{dt} z + \mu z \le   \Vert {N}(h) \Vert_{\normm}.
 \end{equation}
 Similarly to the argumentation above, we rewrite \eqref{52c} as
\[
\frac{d}{dt}(e^{\mu t}z) \lesssim e^{-(2\lambda-\mu)t},
\]
and deduce that
\begin{equation}\label{52d}
z(t)  \lesssim \begin{cases} e^{-2\lambda t} &  \mbox{if }\mu>2\lambda,\\ te^{-2\lambda t}&\mbox{if }\mu=2\lambda,\\
e^{-\mu t} &\mbox{if }\mu<2\lambda,\end{cases}
\end{equation}
via integration.

Finally, regarding the remaining terms in \eqref{52f}, the first two estimates in \eqref{eq-odelema0} imply that
\[
\frac{d}{dt} \|h_c\| +\frac{d}{dt} \|h_u\| \gtrsim  - e^{-2\lambda t},
\]
and thus, an integration from $t$ to $\infty$ gives, thanks to the fact that $\Vert h_c\Vert_{\normm} +\Vert h_u\Vert_{\normm} \to 0$ as $t\to \infty$,
 \bea  \label{eq-u+0}\Vert h_c(t)\Vert_{\normm} +\Vert h_u(t)\Vert_{\normm} \lesssim  e^{-2\lambda t}  .\eea 

 Therefore, substituting     \eqref{eq-yi},  \eqref{52d}, and \eqref{eq-u+0} into \eqref{52f}, we deduce that
 \bea \left \Vert h(t)- \sum_{i=0}^{J} C_{i} \varphi_i e^{-\lambda_i t} \right \Vert_{\normm}  \lesssim \begin{cases} e^{-2\lambda t} &  \mbox{if }\mu>2\lambda,\\ te^{-2\lambda t}&\mbox{if }\mu=2\lambda,\\
e^{-\mu t} &\mbox{if }\mu<2\lambda\end{cases}
 \eea 
as desired.  
{If $C_i\neq0$ for some $\lambda_i \in [0,\lambda)$, then \eqref{eq-fastdcay'} contradicts \eqref{eq-fastdcay}; thus $\lambda_i \ge \lambda$ as asserted.}

 \noindent {\bf{Case 1.}} In  case the neutral modes are dominating \eqref{eq-neutraldominate}, an estimate analogously to the one in Case 2 above gives rise to the bound
 \bea \Vert {N}(h(t))\Vert_{\normm} \le 2C_2\eps \|h_c(t)\|, \eea 
provided that $t\ge t_c$ for some $t_c$ large enough. 
 By plugging this into the middle equation of \eqref{eq-odelema0}, we find 
 \[
 \frac{d}{dt} \|h_c\| \ge -2C_2\eps\|h_c\|,
 \]
and thus, via \eqref{eq-neutraldominate}, \bea \|h(t+1)\|\ge \Vert h_c(t+1)\Vert_{\normm} \gtrsim \Vert h_c(t)\Vert_{\normm}\gtrsim \|h(t)\|,
   \eea
      for any $t\ge t_c$. With this information at hand, we may reconsider our previous bound on the nonlinearity. This time, making use of the pointwise estimate in \eqref{51} and the smoothing properties from Proposition \ref{P1}, we have
      \[
      \|N(h(t))\| \lesssim \|h(t-1)\|\|h(t)\| \lesssim \|h(t)\|^2 \lesssim \|h_c(t)\|^2,
      \]
      for any $t\ge t_c$, for some (possibly larger) $t_c$.  The second estimate in \eqref{eq-odelema0} now turns into the growth condition
      \[
      \frac{d}{dt} \|h_c(t)\| \gtrsim  - \|h_c(t)\|^2,
      \]
      which yields the lower bound
      \[
      \|h(t)\|\ge \|h_c(t)\| \gtrsim \frac1t,
      \]
      if $t\ge t_c$ for some $t_c$.

   This concludes the proof of Theorem \ref{thm-dichotomy}.
\end{proof}

\section{Smoothing estimates}
\label{S4}

{
In this section, we use parabolic regularity techniques to prove Proposition~\ref{P1}. We remark that optimal (boundary) regularity estimates were derived recently by Jin and Xiong for the rescaled solution $v$ of \eqref{eq-U} rather than the relative error $h$ \cite{jin2019optimal}. However, from Theorem 5.1 in their paper we easily infer that 
\begin{equation}
\label{200}
\partial_t^k h\in C^{0}((\tau,\infty)\times \Omega),
\end{equation}
for any $k\in \N$ and $\tau>0$. This insight will simplify the derivation of our regularity estimates substantially.
}

{
Smoothing features are  typical for parabolic equations and they remain true in the time and tangential directios for the singular parabolic equation under consideration,   see \eqref{200} and Corollary \ref{C12}. In transversal direction, regularity is limited  (if $p$ is not an integer) \cite{jin2019optimal}. Indeed,  simple scaling arguments for the elliptic problem suggest that $V(x)\sim a\dist(x,\partial \Omega) + b \dist(x,\partial\Omega)^{p+2}$ close to the boundary, and the same behavior can be expected for the parabolic problem \eqref{eq-U}.}

{
For deriving the smoothing estimates in Proposition~\ref{P1}, we notice that the leading order contribution  in the nonlinearity \eqref{eq-problemrelative} is of the order $|h||\partial_t h| \lesssim \eps |\partial_t h|$, and plays thus the role of a perturbation term in regularity estimates. Moreover, the equation is invariant under differentiation in time and in tangential coordinates near the domain boundary (at least with regard to the leading order contributions), and thus, regularity in these variables is propagated and even further increased by parabolicity. We deal with higher-order derivatives of the nonlinearity by applying suitable interpolations, so that eventually, derivatives of the nonlinearity will play the role of perturbations similarly to the nonlinearity itself as discussed above. Of course, smoothing proceeds instantaneously but not uniformly in time. For this reason, the estimates in or behind Proposition \ref{P1}, Equation \eqref{200} or Corollary \ref{C12} deteriorate  as $t\to 0$. 
}

Before addressing the dynamical problem, we need to estimate the first three derivatives for weak solutions 
of the nonlinear elliptic problem \eqref{eq-V}, starting from
the known result \eqref{24}. Later we'll see that 
higher-order tangential derivatives of this solution can also be estimated near the domain boundary.

\begin{lemma}[Regularity of asymptotic profile]
\label{L13}
For any $\alpha\in(0,1)$ with $\alpha \le p-1$, {if $V \in H^1(\Omega)$ satisfies \eqref{weak elliptic} for all $\varphi \in C^1(\bar \Omega)$, then} $V\in C^{3,\alpha}(\Omega)$ and  for any $x\in \Omega$, 
\begin{gather}
\dist(x,\partial \Omega) \lesssim V(x) \lesssim \dist(x,\partial \Omega) ,\label{24}\\
\mbox{\rm and}\ |\grad V(x)|, |\grad^2 V(x)|, |\grad^3 V(x)|\lesssim 1\label{25}
\end{gather}
{hold.} Furthermore, there exists an $r\gtrsim 1$ such that
\begin{equation}
|\grad V(x)| \gtrsim 1,\label{27}
\end{equation}
for any $x\in \Omega$ with $\dist(x,\partial \Omega)\le r$.
\end{lemma}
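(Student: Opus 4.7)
The plan is to establish regularity in three stages: first $L^\infty$, then $C^{1,\beta}(\bar\Omega)$, and finally $C^{3,\alpha}(\bar\Omega)$ with the sharp exponent $\alpha \le p-1$, from which all of \eqref{24}--\eqref{27} follow.

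First I would extract an $L^\infty$ bound by Moser iteration. Testing \eqref{weak elliptic} against $\varphi = V^{1+2\gamma}$ and using the Sobolev embedding $H_0^1 \hookrightarrow L^{2^*}$ produces a recursion of the schematic form $\|V\|_{L^{(1+\gamma)2^*}}^{2(1+\gamma)} \lesssim \|V\|_{L^{p+1+2\gamma}}^{p+1+2\gamma}$. The subcriticality $p+1 < 2^* = 2n/(n-2)$ (equivalent to $m > (n-2)/(n+2)$) ensures that the starting exponent $p+1$ already lies strictly above the unique fixed point of this recursion, so the scheme runs to $V \in L^\infty(\Omega)$. Consequently $\Delta V = -V^p \in L^\infty(\Omega)$, and Calder\'on--Zygmund theory together with Sobolev embedding on the smooth bounded $\Omega$ gives $V \in W^{2,q}(\Omega) \cap C^{1,\beta}(\bar\Omega)$ for every $q<\infty$ and every $\beta \in (0,1)$. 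The upper bound in \eqref{24} is then immediate from $V \in C^1(\bar\Omega)$ with $V|_{\p\Omega} = 0$.

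Next I would establish \eqref{27} and the lower half of \eqref{24}. Since $-\Delta V = V^p \ge 0$ and we discard the trivial solution, the strong maximum principle gives $V > 0$ in $\Omega$, and Hopf's boundary-point lemma yields $\p_\nu V < 0$ pointwise on $\p\Omega$, which by compactness of $\p\Omega$ upgrades to $-\p_\nu V \ge c > 0$ uniformly. Continuity of $\nabla V$ then propagates $|\nabla V| \ge c/2$ into a tubular collar $\{x \in \Omega : \dist(x,\p\Omega) < r\}$ for some $r \gtrsim 1$, which is exactly \eqref{27}. Integrating $|\nabla V|$ along inward normals from $\p\Omega$ converts this to $V(x) \gtrsim \dist(x,\p\Omega)$ inside the collar, and strict positivity plus continuity of $V$ on the complementary compact set handle the rest of \eqref{24}.

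For \eqref{25} I would bootstrap Schauder once more. Because $p > 1$ (equivalent to $m < 1$), the function $s \mapsto s^p$ belongs to $C^{1,p-1}([0,\|V\|_\infty])$, so composing with $V \in C^{1,\alpha}(\bar\Omega)$ (any $\alpha \in (0,1)$ with $\alpha \le p-1$) yields $V^p \in C^{1,\alpha}(\bar\Omega)$. Boundary Schauder estimates for $\Delta V = -V^p$ with $V|_{\p\Omega} = 0$ on the smooth $\p\Omega$ then deliver $V \in C^{3,\alpha}(\bar\Omega)$, subsuming the uniform pointwise bounds \eqref{25}. The main obstacle is the sharpness of $\alpha \le p-1$ in this last step: the linear vanishing $V \sim \dist(x,\p\Omega)$ from the previous paragraph forces $V^{p-1} \sim \dist^{p-1}$ near $\p\Omega$, so $\p_i(V^p) = p V^{p-1} \p_i V$ is genuinely only $C^{0,p-1}$ there when $p \in (1,2)$, and no unweighted iteration can push past this. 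It is precisely this obstruction that motivates the weighted estimates used throughout the rest of Section \ref{S4} to handle higher tangential and temporal derivatives of $h$. The remaining ingredients --- Moser, Calder\'on--Zygmund, Hopf, and Schauder --- are textbook on a smooth bounded domain and require no further novelty.
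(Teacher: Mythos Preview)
Your proof is correct and the core bootstrap (Calder\'on--Zygmund $\Rightarrow C^{1,\beta}$, then $V^p\in C^{1,\alpha}$ for $\alpha\le p-1$, then Schauder $\Rightarrow C^{3,\alpha}$) is exactly the paper's argument. The only differences are in how \eqref{24} and \eqref{27} are obtained: the paper simply cites the two-sided boundary estimate \eqref{24} from the literature (DiBenedetto--Kwong--Vespri, Bonforte--Grillo--V\'azquez) and then deduces \eqref{27} from the linear growth in \eqref{24} together with the Hessian bound in \eqref{25}, whereas you derive the $L^\infty$ bound by Moser iteration and get \eqref{27} and the lower half of \eqref{24} directly from the Hopf lemma. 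Your route is more self-contained and avoids the external references; the paper's is shorter by outsourcing. Either way the substance is the same.
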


\begin{proof}
{For a fixed $V\in S$,}
the first estimate is established, for instance, in Theorem 1.1 in \cite{DiBenedettoKwongVespri91} (on the level of the evolutionary problem) or Theorem 5.9 in \cite{BonforteGrilloVazquez13};  {the form of \eqref{24} makes it clear that the constants depend continuously on $V$ in the relatively-uniform topology on $S$.}
The second and the third estimate follow from maximal regularity estimates and Sobolev embeddings. Indeed, since $V\in L^{\infty}(\Omega)$ thanks to \eqref{24}, we must have that $V\in W^{2,q}(\Omega)$ for any $q\in(1,\infty)$ based on Calder\'on--Zygmund estimates for the elliptic problem \eqref{eq-V}, {see, e.g., 
Chapter 11 in \cite{Krylov_Sob},} and thus $V\in C^{1,\alpha}(\bar \Omega)$ for any $\alpha\in(0,1)$ by Sobolev embeddings. It follows that $\partial_i V^p = pV^{p-1}\partial_i V\in C^{0,{\alpha}}(\bar \Omega)$ {provided that $\alpha\le p-1$}, and thus one spatial derivative of the equation shows  $V\in C^{3,{\alpha}}(\bar \Omega)$ by Schauder estimates, {see, e.g., Chapter 6 in \cite{GilbargTrudinger01}.}

The statement in \eqref{27} is a consequence {of the above}. Indeed, according to \eqref{24}, at the boundary $V$ grows linearly in the direction of the inner normal. By {the estimates \eqref{25}}, we must thus have \eqref{27} in a neighborhood of the boundary.  
\end{proof}

Our first step in the derivation of the smoothing estimates is a maximal regularity estimate for the linear{ization} of 
\eqref{eq-relativeerror}. More precisely,  we consider the inhomogeneous linear equation
\begin{equation}
\label{10}
\partial_t h  - V^{-1-p}\div (V^2 \grad h) = (p-1)h + f,
\end{equation}
with zero initial data. For  general initial data $h_0\in L^2_{p+1}$ and inhomogeneities $f\in L^1((0,T);L^2_{p+1})$, a solution is always understood in the weak sense.
A \emph{weak solution} of \eqref{10} refers to a function $h\in L^{\infty}((0,T);L^2_{p+1}) \cap L^2((0,T);\dot H^1_2)$ 
in the spaces provided by Lemma \ref{L17}
such that 
\begin{equation}
\label{10a}
\begin{aligned}
\MoveEqLeft 
-\iint_{(0,T)\times \Omega} \black{h} \partial_t \varphi\, d\mu_{p+1}dt + \iint_{(0,T)\times \Omega} \grad\varphi\cdot\grad h\, d\mu_2dt
\\
&  =   \iint_{(0,T)\times \Omega} ((p-1)h+f) \varphi d\mu_{p+1}dt + \int_{\Omega} \left.\varphi\right|_{t=0}h_0\, d\mu_{p+1},
\end{aligned}\end{equation}
for any $\varphi\in C_{c}^1([0,T)\times \bar \Omega)$ {of compact support}. {It should be stressed that we do not impose spatial boundary conditions on $h$ in the parabolic problem (5.5), which turns out to be well-posed (only) in this case. This is a consequence of the observation that by (formally) integrating by parts in the gradient term in the weak formulation \eqref{10a}, the boundary term vanishes thanks to the Dirichlet boundary conditions satisfied by $V$, see \eqref{eq-V}.}

Existence of weak solutions can be derived via standard methods, for instance, via Galerkin approximations based on an $L^2_{p+1}$ orthonormal basis consisting of eigenfunctions of the linear operator $L{=L_V}$.  Moreover, from standard energy estimates (derived similarly to those in Lemma \ref{L17}), we infer the uniqueness of weak solutions.

{What is a crucial tool in our theory is a maximal regularity estimate for the linear equation \eqref{10}, that we consider, for convenience, with $L^2_{2p}$ inhomogenity and zero initial data, see Proposition \ref{P5} below.

}
{In the interior of $\Omega$, maximal regularity for the parabolic problem \eqref{10} follows by standard theory, see, e.g., Chapter 7.1 in \cite{Evans}, because the diffusivity coefficients are strictly positive in the interior as a consequence of \eqref{24}.
We shall thus focus on the boundary from here on and we fix $x_0\in \partial\Omega$. Let $\eta$ denote a cut-off function on $\R^n$ interpolating smoothly between $\eta=1$ in $B_r(x_0)$ and $\eta = 0$ outside of $B_{2r}(x_0)$. }

{A short computation reveals that the localized solution $H =  \eta h$ satisfies the problem
\begin{equation}\label{10d}
 \partial_t H - V^{-1-p}\div(V^2\grad H) =    F+G,
\end{equation}
where $F$ and $G$ are given by
\[
F =  \eta f,\quad G =  -2V^{1-p} \grad\eta\cdot \grad  h - V^{1-p} h \laplace \eta   - 2 V^{-p} h \grad V \cdot \grad \eta .
\]
The following lemma guarantees that $G$ belongs to $L^2(L^2_{2p})$ provided that $h\in L^2(L^2_{p+1})\cap L^2( \dot H^1_2)$, which {is assumed for} our weak solutions.
}

\begin{lemma}[Weighted Poincar\'e / Hardy type inequality]
\label{L13b}
{All $h\in L^2_{p+1}\cap \dot H^1_2$ satisfy}
\[
\|h\|_{L^2} \lesssim \|h\|_{L^2_{p+1}} + \|\grad h\|_{L^2_2}.
\]
{In particular, it holds that 
\[
\|h\|_{L^2_{p+1}} \lesssim \|h\|_{L^2_{2p}} +  \|\grad h\|_{L^2_2}.
\]
}
\end{lemma}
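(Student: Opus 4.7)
The plan is to base both estimates on a single family of integration-by-parts identities obtained from the elliptic equation $-\Delta V = V^p$ by testing against $V^c h^2$ for a well-chosen exponent $c>0$; because $V$ vanishes on $\partial\Omega$ the boundary contributions disappear whenever $c>0$, and one is left with
\[
\int V^{c+p} h^2\,dx = c\int V^{c-1} h^2 |\grad V|^2\,dx + 2\int V^c\, h\,\grad V\cdot\grad h\,dx.
\]
After rearranging and applying Young's inequality to the cross term, these identities will yield weighted controls on $h^2|\grad V|^2$, which I then convert to the desired norms by localizing near and away from $\partial\Omega$ using Lemma~\ref{L13}.

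For the first estimate I will take $c=1$. Moving the $|\grad V|^2$ term to the left and splitting the cross term via $2|Vh\grad V\cdot\grad h|\le \tfrac12 h^2|\grad V|^2 + 2V^2|\grad h|^2$ so that half can be absorbed, I obtain $\int h^2|\grad V|^2\,dx \lesssim \int V^{p+1}h^2\,dx + \int V^2|\grad h|^2\,dx$. By Lemma~\ref{L13} there is a tubular neighborhood $\Omega_r=\{\dist(\cdot,\partial\Omega)<r\}$ on which $|\grad V|\gtrsim 1$, while $V\gtrsim 1$ on $\Omega\setminus\Omega_r$; bounding $\int_{\Omega_r}h^2\,dx$ by $\int h^2|\grad V|^2\,dx$ and $\int_{\Omega\setminus\Omega_r}h^2\,dx$ by $\int V^{p+1}h^2\,dx$ delivers the first inequality.

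For the ``in particular'' statement I will choose $c=p$, giving $\int V^{2p}h^2\,dx = p\int V^{p-1}h^2|\grad V|^2\,dx + 2\int V^p h\grad V\cdot\grad h\,dx$. Splitting the cross term as $(V^{(p-1)/2}h|\grad V|)\cdot(V^{(p+1)/2}|\grad h|)$ and applying AM-GM gives $|2V^p h\grad V\cdot\grad h|\le V^{p-1}h^2|\grad V|^2 + V^{p+1}|\grad h|^2$. Since $p=1/m>1$, one unit of the first term absorbs on the left, and since $V$ is bounded on $\bar\Omega$ the factor $V^{p+1}$ may be replaced by $V^2$ up to a constant, yielding
\[
\int V^{p-1}h^2|\grad V|^2\,dx \lesssim \int V^{2p}h^2\,dx + \int V^2|\grad h|^2\,dx.
\]
To convert this into a bound on $\int V^{p+1}h^2\,dx$ I split $\Omega$ a second time: on $\Omega_r$ the combination $V\lesssim 1$ and $|\grad V|^2\gtrsim 1$ gives $V^{p+1}\lesssim V^{p-1}|\grad V|^2$, while on $\Omega\setminus\Omega_r$ the bound $V\gtrsim 1$ together with $p>1$ yields $V^{p+1}=V^{1-p}\cdot V^{2p}\lesssim V^{2p}$.

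The main technical subtlety is justifying these integrations by parts when $h$ is only weakly regular, i.e.\ $h\in L^2_{p+1}\cap \dot H^1_2$; this I will handle by the standard procedure of approximating $h$ by smooth functions supported away from $\partial\Omega$ (e.g.\ via cutoffs at a small distance from the boundary) and passing to the limit, the weight structure being well-suited for dominated convergence. The conceptual core of the proof, however, is the geometric input of Lemma~\ref{L13}: without the quantitative near-boundary behavior $V\sim\dist(\cdot,\partial\Omega)$ together with $|\grad V|\sim 1$, the transfer of the $|\grad V|^2$-weighted bounds to the $L^2$ and $L^2_{p+1}$ norms appearing in the statement would not be possible.
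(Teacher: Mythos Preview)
Your argument is correct. For the first inequality you are doing exactly what the paper does: the identity you obtain with $c=1$ is the paper's integration-by-parts formula (derived there by writing $\int h^2|\grad V|^2\,dx = -\int h^2 V\Delta V\,dx - 2\int h\,V\grad V\cdot\grad h\,dx$ and using $-\Delta V = V^p$), and your domain-splitting via $|\grad V|\gtrsim 1$ near $\partial\Omega$ and $V\gtrsim 1$ in the interior is equivalent to the paper's pointwise bound $1\lesssim V^{p+1}+|\grad V|$.

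For the second inequality your route differs. The paper does not repeat the integration-by-parts; instead it interpolates via H\"older,
\[
\|h\|_{L^2_{p+1}} \le \|h\|_{L^2_{2p}}^{(p+1)/(2p)}\,\|h\|_{L^2}^{(p-1)/(2p)},
\]
feeds the first inequality into the unweighted factor, and finishes with Young's inequality. Your approach---running the same testing identity with $c=p$, absorbing via $p>1$, and splitting the domain again---is a self-contained alternative that avoids the interpolation step at the price of redoing the near/away-from-boundary decomposition. Both are short; the paper's version has the advantage of reducing the second statement to the first in one line, while yours exhibits a single mechanism (test against $V^c h^2$) behind both estimates. One minor remark on the approximation: the paper invokes density of $C^\infty(\bar\Omega)$ in $L^2_{p+1}\cap\dot H^1_2$ rather than cutting off near $\partial\Omega$, but either justification suffices here since the weights $V^c$ with $c>0$ kill the boundary terms.
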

The proof {of the first statement} is based on an interpolation argument and the properties of the limit~$V$. {The latter then follows via H\"older's and Young's estimate.}

\begin{proof}
{We start considering the first estimate.} {Because $C^{\infty}(\bar \Omega)$ is dense in $L^2_{p+1}\cap \dot H^1_2$, cf.~Lemma 2 in \cite{Seis14}, it is enough to establish the estimate for smooth functions. We first notice that an integration by parts and the defining properties of $V$ in \eqref{eq-V} yield
\begin{align*}
\int h^2 |\grad V|^2 \, dx &= - \int h^2 V \laplace V\, dx - 2 \int h\grad h\cdot  V\grad V\, dx\\
& = \int h^2 V^{p+1}\, dx  - 2 \int h\grad h\cdot  V\grad V\, dx.
\end{align*}
Making use of the elementary inequality $ab \le a^2 + b^2/4$ thus gives
\[
\| h |\grad V| \|_{L^2} \lesssim \|h\|_{L^2_{p+1}} + \|\grad h\|_{L^2_2}.
\]
The {first} statement of the lemma is now a consequence of the fact that $1\lesssim V^{p+1} + |\grad V|$, which holds true thanks to Lemma \ref{L13}.}

{For the second statement, we apply H\"older's inequality, and the previous bound to estimate
\[
\|h\|_{L^2_{p+1}} \le \|h\|_{L^2_{2p}}^{\frac{p+1}{2p}} \|h\|_{L^2}^{\frac{p-1}{2p}} \lesssim \|h\|_{L^2_{2p}}^{\frac{p+1}{2p}} \|h\|_{L^2_{p+1}}^{\frac{p-1}{2p}} + \|h\|_{L^2_{2p}}^{\frac{p+1}{2p}} \|\grad h\|_{L^2_{2}}^{\frac{p-1}{2p}}.
\]
The desired result is then a consequence of Young's inequality $ab \lesssim  a^q +  b^{q'}$ for any H\"older conjugates $q$ and $q'$.}
\end{proof}

{We shall now flatten the boundary. Upon a rotation of the coordinate system, we may assume that the boundary inside $B_r(x_0)$ can be written as a graph of a function $\gamma$, for instance,
\[
\Omega\cap B_{2r}(x_0)   = \left\{x=(x',x_n)\in B_{2r}(x_0):\: x_n>\gamma(x')\right\}.
\]
We set $\hat x = \phi(x) =(x',x_n-\gamma(x'))$, which defines a diffeomorphism in the support of $\eta$, and maps the boundary $\partial \Omega$ into the hypersurface $\R^{n-1}\times \{0\}$. In terms of $\hat H(t,\hat x) = H(t,x)$, $\hat V(\hat x) = V(x)$, $\hat F(t,\hat x) = F(t,x)$, and $\hat G (t,\hat x) = G(t,x)$ the localized  equation \eqref{10d} becomes
\begin{equation}
\label{10e}
 \partial_t \hat H- \hat V^{-1-p}\hat\grad\cdot(\hat V^2 A\hat\grad\hat H)   = \hat F + \hat G,
\end{equation}
where
\[
A = \left( \begin{array}{c|c} I & -\grad'\gamma \\\hline -(\grad'\gamma)^T & 1+|\grad'\gamma|^2\end{array}\right).
\]
Notice that the transformed equation \eqref{10e} has to be considered on the halfspace~$\R^n_+$. 
}

{The advantage of \eqref{10e} over the \eqref{10} is that in the new variables, the weight and its tangential derivatives can be estimated by the distance to the flattened boundary.}

\begin{lemma}[Derivatives of asymptotic profile parallel to flattened boundary]
\label{L14}
For any $\hat x \in \phi(\Omega\cap B_{{2}r}(x_0))$ and
$k \in \N$,  both
\[
\hat x_n \lesssim \hat V(\hat x) \lesssim \hat x_n
\]
and
\begin{equation}\label{linear tangent derivative growth}
|D^k_{\hat x'} \hat V(\hat x) | \lesssim \hat x_n,
\end{equation}
hold. {Moreover, $\hat V/\hat x_n$ belongs to $C^{2,\alpha}$ for some $\alpha \in(0,1)$ with $\alpha \le p-1$.}
\end{lemma}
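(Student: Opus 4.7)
The plan is to treat the three assertions of the lemma in turn, each exploiting a different ingredient from what has already been developed.

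For the two-sided comparison $\hat x_n \lesssim \hat V(\hat x) \lesssim \hat x_n$, I would combine Lemma~\ref{L13}, which gives $V(x)\sim \dist(x,\partial\Omega)$, with the observation that $\phi$ is a bi-Lipschitz diffeomorphism on a neighborhood of $x_0$ (as $\gamma$ is smooth since $\Omega$ is) that maps $\partial\Omega\cap B_{2r}(x_0)$ into the hyperplane $\{\hat x_n=0\}$. The Jacobian bounds on $\phi$ and $\phi^{-1}$ then give $\dist(x,\partial\Omega)\sim \hat x_n$ on $\phi(\Omega\cap B_{2r}(x_0))$, and composing with $\hat V=V\circ\phi^{-1}$ yields the claim.

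For the tangential-derivative bound $|D^k_{\hat x'}\hat V|\lesssim \hat x_n$, the crucial geometric observation is that, after flattening, $\hat V$ vanishes identically on $\phi(\partial\Omega\cap B_{2r}(x_0))\subset \{\hat x_n=0\}$, since $V$ vanishes on $\partial\Omega$. Every purely tangential derivative of $\hat V$ therefore vanishes along this hyperplane, and the fundamental theorem of calculus in the normal direction,
\[
D^k_{\hat x'}\hat V(\hat x',\hat x_n) = \int_0^{\hat x_n} \partial_s D^k_{\hat x'}\hat V(\hat x',s)\,ds,
\]
reduces the estimate to a pointwise bound on the mixed derivative $\partial_{\hat x_n} D^k_{\hat x'}\hat V$. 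For the low values of $k$, the $C^{3,\alpha}$-regularity supplied by Lemma~\ref{L13}, together with the smoothness of $\gamma$ that $\hat V$ inherits through the change of variables, delivers this immediately. For larger $k$ one iterates tangential Nirenberg difference quotients on the flattened equation~\eqref{10e}; tangential differentiation commutes with the equation modulo coefficients smooth in $\hat x'$ and never crosses the boundary singularity at $\hat V=0$, so regularity propagates along the boundary. Alternatively, one may appeal directly to the boundary regularity results of Jin--Xiong~\cite{jin2019optimal} that are already invoked in Section~\ref{S4}.

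For the ``moreover'' assertion $\hat V/\hat x_n\in C^{2,\alpha}$ I would invoke a Hadamard-type factorization: since $\hat V\in C^{3,\alpha}$ vanishes on $\{\hat x_n=0\}$, the identity
\[
\frac{\hat V(\hat x',\hat x_n)}{\hat x_n} = \int_0^1 \partial_{\hat x_n}\hat V(\hat x',s\hat x_n)\,ds
\]
exhibits $\hat V/\hat x_n$ as an integral average of the $C^{2,\alpha}$ function $\partial_{\hat x_n}\hat V$, and differentiation under the integral sign preserves this regularity; the restriction $\alpha\le p-1$ is inherited from Lemma~\ref{L13}.

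The principal obstacle is the tangential-derivative bound at orders beyond what $V\in C^{3,\alpha}$ gives for free. Because the nonlinearity $V^p$ fails to be smooth at $\{V=0\}$ whenever $p$ is non-integer, a classical bootstrap is blocked in the normal direction. The decisive point is that tangential derivatives remain insensitive to this singularity, and may be combined with the factorization of the ``moreover'' step to write $D^k_{\hat x'}\hat V = \hat x_n\, D^k_{\hat x'}(\hat V/\hat x_n)$, turning the desired bound into a regularity statement for the smoother quotient $\hat V/\hat x_n$ within the range permitted by Lemma~\ref{L13}.
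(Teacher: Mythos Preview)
Your treatment of the comparison $\hat V \sim \hat x_n$ and of the ``moreover'' clause via the Hadamard integral is correct and essentially matches (indeed, for the latter, is more explicit than) the paper's proof. The gap lies in the tangential-derivative bound \eqref{linear tangent derivative growth} for $k\ge 3$.

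First, two misattributions: equation~\eqref{10e} is the localized \emph{parabolic} equation for $\hat H$, not the elliptic equation satisfied by $\hat V$; and the Jin--Xiong results concern the time-dependent solution $v$, not the stationary profile $V$. Neither reference delivers the claim.

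More substantively, your assertion that ``tangential differentiation never crosses the boundary singularity at $\hat V=0$'' glosses over the crux. After flattening, $\hat V$ satisfies an elliptic equation whose right-hand side contains $\hat V^p$. Differentiating tangentially produces $\partial_{\hat x_i}(\hat V^p)=p\hat V^{p-1}\partial_{\hat x_i}\hat V$, and $\hat V^{p-1}$ is only $C^{0,p-1}$ at the boundary when $1<p<2$, so a naive Schauder bootstrap stalls. The paper's device is to rewrite
\[
\partial_{\hat x_i}(\hat V^p)=p\,\hat V^p\cdot\frac{\partial_{\hat x_i}\hat V}{\hat V},
\]
observe that both numerator and denominator of the ratio vanish linearly at $\hat x_n=0$, and then invoke precisely the Hadamard argument you used for the ``moreover'' clause to see that the ratio is $C^{1,\alpha}$. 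Since $\hat V^p$ is $C^{1,\alpha}$ (via the equation and $V\in C^{3,\alpha}$), the differentiated right-hand side stays $C^{1,\alpha}$, Schauder theory yields $\hat\eta\,\partial_{\hat x_i}\hat V\in C^{3,\alpha}$, and one iterates with nested cutoffs. Your final paragraph's factorization $D^k_{\hat x'}\hat V=\hat x_n\,D^k_{\hat x'}(\hat V/\hat x_n)$ recovers only $k\le 2$ from $\hat V/\hat x_n\in C^{2,\alpha}$ and therefore does not close the induction; the missing ingredient is exactly this factorization of $\partial_{\hat x_i}(\hat V^p)$ inside the bootstrap on the \emph{elliptic} equation for $\hat V$.
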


\begin{proof}We start by noticing that the boundary estimate \eqref{24} translates into 
\begin{equation}
\label{39}
\hat x_n\lesssim \hat V(\hat x)\lesssim \hat x_n
\end{equation}
under the change of variables. Indeed, since 
\[
\dist(x,\p \Omega)^2 = \inf_{y'\in\R^{n-1}} \left(|x'-y'|^2 + (x_n-\gamma(y'))^2\right),
\]
on the one hand, by choosing $y'=x'$, we immediately deduce that
\[
\dist(x,\p \Omega) \le x_n-\gamma(x') = \hat x_n.
\]
On the other hand, as the minimizer $y'$ solves the optimality condition $
x'-y' = (x_n-\gamma(y'))\grad'\gamma(y')$, we find
\begin{align*}
\hat x_n  \le |x_n-\gamma(y')| + \|\grad'\gamma\|_{L^{\infty}} |x'-y'| \le \left(1+\|\grad'\gamma\|_{L^{\infty}}\right)\dist(x,\partial\Omega).
\end{align*}
Thus $\dist(x,\partial\Omega) $ is comparable to $\hat x_n$, which implies \eqref{39} via \eqref{24}.

We have to show that this estimate remains true for tangential derivatives. 
Since Lemma \ref{L13} asserts $\hat V \in C^{3,{\alpha}}$, \eqref{linear tangent derivative growth} follows directly for $k\in\{0,1,2\}$ via Taylor expansion because the homogeneous boundary conditions are invariant under differentiation in tangential direction. For larger values of $k$, we have to transform the elliptic equation \eqref{eq-V} into a problem on the half-space. In a similar way as we transformed the parabolic equation, we find that
\[
-\hat \grad\cdot(A\hat \grad(\hat \eta \hat V)) = \hat \eta\hat V^p + B\cdot \grad\hat V + C\hat V,
\]
for some smooth and bounded  functions $B$ and $C$ on $\R^n_+$ that depend only on the regularity and the shape of the boundary $\partial \Omega$. Differentiating with respect to tangential variables $x_i$ for any $i<n$, we find 
\[
-\hat \grad\cdot(A\hat \grad(\hat \eta \p_{\hat x_i}\hat V)) = f
\]
with $f \in C^{1,{\alpha}}$,  since e.g.~Lemma \ref{L13} shows $\p_{\hat x_i}( \hat V^p) = p \hat V^p \frac{\p_{\hat x_i} \hat V}{\hat V}$ 
to be the product of a $C^{{3},{\alpha}}$
function with a ratio in which the $C^{2,\alpha}$ numerator and $C^{3,\alpha}$ denominator both vanish linearly at the halfspace boundary.  
On any smooth bounded subdomain of the halfspace containing $\phi(\Omega \cap B_{2r(x_0)})$, Schauder theory {(e.g., Chapter 6 in \cite{GilbargTrudinger01}),} then implies $\hat \eta \p_{\hat x_i}\hat V \in C^{3,\alpha}$ so that
\eqref{linear tangent derivative growth} holds for $k=3$.  For larger $k$,  choosing a sequence $\hat \eta_{k-1} > \hat \eta_{k}$ of nested cutoffs satisfying the same hypotheses as $\hat \eta_3=\hat \eta$, and a multi-index $\beta \in \N_0^{n-1} \times \{0\}$ consisting of $|\beta|=k-2$ tangential derivatives yields
\[
-\hat \grad\cdot(A\hat \grad(\hat \eta_k \hat D^\beta\hat V)) = f_\beta.
\]
Induction on $k$ gives $f_\beta \in C^{1,{\alpha}}$ hence $\hat \eta_{k} \hat D^\beta \hat V \in C^{3,{\alpha}}$ and \eqref{linear tangent derivative growth} for all $k$;
this induction relies on the decay already established for the derivatives of $V$ which appear in the $p$-homogeneous nonlinearities,  (and the fact that 
of the $k-1$ derivatives of $V$ that contribute elsewhere to $f_\beta$, all but two are in tangential directions).

{Finally, the third statement of the lemma follows from Lemma \ref{L13} and Taylor expansion.}
\end{proof}
{It follows immediately from the preceding lemma that the  problem in \eqref{10e} can be further rewritten as
\begin{equation}
\label{10f}
\partial_t \hat H - \hat x_n^{-1-p} \hat \grad\cdot (\hat x_n^2 \tilde A \hat \grad \hat H) = \tilde F + \tilde G,
\end{equation}
for some new elliptic $\tilde A$, and where  $\tilde G$ is the sum of 
$\hat G$ and other lower-order terms of the same class. The weak formulation in \eqref{10} now turns into 
\begin{equation}\label{10h}
\begin{aligned}
\MoveEqLeft-\iint_{(0,T)\times \R^n_+} \hat H \partial_t \hat \varphi\, d\hat \mu_{p+1}dt + \iint_{(0,T)\times \R^n_+} \hat \grad \hat \varphi\cdot \tilde A \hat \grad  \hat H\, d \hat \mu_2dt\\
 &= \iint_{(0,T)\times \R^n_+}  \hat \varphi  (\hat F+\tilde G)\, d \hat \mu_{p+1}dt  + \int_{\R^n_+} \hat H_0 \left.\hat \varphi\right|_{t=0}\, d\hat \mu_{p+1},
\end{aligned}
\end{equation} 
for any {$\hat \varphi \in L^2(L^2_2)\cap L^2(\dot H^1_2)\cap \dot H^1(L^2_{p+1})$ vanishing near the  endpoint $T$}.

{We now prove maximal regularity for the problem in \eqref{10f}.
\begin{lemma}[Maximal regularity for linearized inhomogeneous halfspace problem]
\label{L14b}
Let $\tilde F$ and $\tilde G$ be given in $L^2(L^2_{2p})$ and let $\hat H\in L^2(L^2_{p+1})\cap L^2(\dot H^1_2)$ be a weak solution of \eqref{10f} with zero initial data. Then $\hat H \in L^2(\dot H^1)\cap L^2(\dot H^2_2)\cap \dot H^1(L^2_{2p})\cap C^0(L^2_{p+1})$ with 
\begin{equation}\label{10g}
\begin{aligned}
\MoveEqLeft \| \hat \grad\hat H\|_{L^{\infty}(L^2_{p+1})} + \|\partial_t \hat H\|_{L^2(L^2_{2p})} + \|\hat \grad \hat H\|_{L^2(L^2)} + \|\grad^2\hat H\|_{L^2(L^2_2)}\\
& \lesssim \|\tilde F\|_{L^2(L^2_{2p})} + \|\tilde G\|_{L^2(L^2_{2p})} + \|\hat \grad \hat H\|_{L^2(L^2_2)} +\|\hat H\|_{L^2(L^2_{2p})} .
\end{aligned}
\end{equation}
\end{lemma}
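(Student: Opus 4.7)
My plan is to exploit the conservative form of \eqref{10f}, namely
\[
\hat x_n^{p+1}\partial_t \hat H - \hat\grad\cdot(\hat x_n^2\tilde A\hat\grad\hat H) = \hat x_n^{p+1}(\tilde F + \tilde G),
\]
by testing against the multiplier $\hat x_n^{p-1}\partial_t\hat H$. The time-derivative term directly yields $\|\partial_t\hat H\|_{L^2_{2p}}^2$, exactly the weight demanded on the left-hand side of \eqref{10g}. Writing $\hat\grad(\hat x_n^{p-1}\partial_t\hat H) = \hat x_n^{p-1}\hat\grad\partial_t\hat H + (p-1)\hat x_n^{p-2}\partial_t\hat H\,\hat e_n$ and integrating by parts on the divergence term splits that contribution into a main piece
\[
\int \hat x_n^{p+1}\hat\grad\partial_t\hat H\cdot\tilde A\hat\grad\hat H\,d\hat x = \tfrac12\tfrac{d}{dt}\int \hat x_n^{p+1}\tilde A\hat\grad\hat H\cdot\hat\grad\hat H\,d\hat x
\]
(using that $\tilde A$ is symmetric and $t$-independent), which together with ellipticity and vanishing initial data produces $\|\hat\grad\hat H\|_{L^\infty(L^2_{p+1})}^2$ upon time integration, and a commutator remainder $(p-1)\int \partial_t\hat H\,\hat x_n^p\,\tilde A\hat\grad\hat H\cdot\hat e_n\,d\hat x$, which Cauchy--Schwarz bounds by $\|\partial_t\hat H\|_{L^2_{2p}}\|\hat\grad\hat H\|_{L^2}$ and hence can be absorbed by Young's inequality at the cost of introducing the unweighted gradient norm. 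The forcing pairs cleanly into $\|\tilde F+\tilde G\|_{L^2(L^2_{2p})}\|\partial_t\hat H\|_{L^2(L^2_{2p})}$. Together this first pass yields
\[
\|\partial_t\hat H\|_{L^2(L^2_{2p})}^2 + \|\hat\grad\hat H\|_{L^\infty(L^2_{p+1})}^2 \lesssim \|\tilde F+\tilde G\|_{L^2(L^2_{2p})}^2 + \|\hat\grad\hat H\|_{L^2(L^2)}^2.
\]

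Next, to extract the Hessian bound $\|\hat\grad^2\hat H\|_{L^2(L^2_2)}$ and close the loop on $\|\hat\grad\hat H\|_{L^2(L^2)}$, I would freeze $t$ and treat \eqref{10f} as the degenerate elliptic problem
\[
-\hat\grad\cdot(\hat x_n^2\tilde A\hat\grad\hat H) = \hat x_n^{p+1}\bigl(\tilde F+\tilde G-\partial_t\hat H\bigr).
\]
Expanding the divergence as $\hat x_n^2\hat\grad\cdot(\tilde A\hat\grad\hat H) + 2\hat x_n\,\tilde A\hat\grad\hat H\cdot\hat e_n$ and dividing by $\hat x_n^2$ reduces this to a second-order elliptic equation on $\R^n_+$ whose top-order part is uniformly elliptic with coefficients sufficiently regular by Lemma~\ref{L14}, and whose lower-order drift is proportional to $1/\hat x_n$. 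Standard weighted Calder\'on--Zygmund theory for this operator --- or, alternatively, pairing against $\hat\grad\cdot(\tilde A\hat\grad\hat H)$ and using a Bochner-type identity --- delivers
\[
\|\hat\grad^2\hat H\|_{L^2(L^2_2)}^2 \lesssim \|\partial_t\hat H\|_{L^2(L^2_{2p})}^2 + \|\tilde F+\tilde G\|_{L^2(L^2_{2p})}^2 + \|\hat\grad\hat H\|_{L^2(L^2)}^2 + \|\hat H\|_{L^2(L^2_{2p})}^2,
\]
where the last lower-order term accounts for the $1/\hat x_n$ drift contribution. To bridge back to the unweighted gradient norm on the right-hand side of Step~1, I would apply Lemma~\ref{L13b} componentwise to $\partial_i\hat H$ for each $t$, obtaining $\|\hat\grad\hat H\|_{L^2}\lesssim \|\hat\grad\hat H\|_{L^2_{p+1}}+\|\hat\grad^2\hat H\|_{L^2_2}$ pointwise in time; integrating in $t$ and combining with the two displayed inequalities, a small multiple of $\|\hat\grad\hat H\|_{L^2(L^2)}^2$ on the right can be absorbed into the left to produce \eqref{10g}. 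Continuity $\hat H \in C^0(L^2_{p+1})$ is then automatic from $\partial_t\hat H \in L^2(L^2_{2p})$ together with the energy bound.

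I expect the main obstacle to be the genuine mismatch of weights between the two sides of \eqref{10g}: the natural self-adjoint pairing for the parabolic energy wants forcing in $L^2_{p+1}$, while the hypothesis provides only the strictly weaker $L^2_{2p}$ (strictly weaker since $p>1$). The multiplier $\hat x_n^{p-1}\partial_t\hat H$ is the key device that bridges this gap, yet it fails to be self-conjugate against the natural weight $\hat x_n^{p+1}$, and the resulting commutator re-introduces an unweighted gradient term that can only be absorbed once the elliptic regularity of Step~2 and the Hardy--Poincar\'e inequality from Lemma~\ref{L13b} are in hand. Sequencing the Young absorptions so that no norm is circularly dominated by itself --- and verifying that the tangential flattening from Lemma~\ref{L14} produces coefficients with enough regularity to run the elliptic estimate cleanly --- will be the most delicate aspect of the argument.
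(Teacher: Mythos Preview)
Your multiplier $\hat x_n^{p-1}\partial_t\hat H$ is a clever device and Step~1 is correct as far as it goes: it does deliver $\|\partial_t\hat H\|_{L^2(L^2_{2p})}$ and $\|\hat\grad\hat H\|_{L^\infty(L^2_{p+1})}$ at the price of an unweighted $\|\hat\grad\hat H\|_{L^2(L^2)}$ on the right. The argument breaks down at Step~2, however, in two related places. First, the ``standard weighted Calder\'on--Zygmund theory'' you invoke for the degenerate elliptic operator $-\hat\grad\cdot(\tilde A\hat\grad\,\cdot\,)-2\hat x_n^{-1}(\tilde A\hat\grad\,\cdot\,)_n$ with singular $1/\hat x_n$ drift is precisely the non-standard content of the lemma; there is no off-the-shelf result delivering $\|\hat\grad^2\hat H\|_{L^2_2}\lesssim\|\hat x_n^{p-1}f\|_{L^2}$ for this operator, and a naive Bochner pairing against $\hat\grad\cdot(\tilde A\hat\grad\hat H)$ produces boundary and commutator terms with the same unfavourable weights. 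Second, even granting your Step~2 estimate as stated, the closing absorption fails: Lemma~\ref{L13b} gives $\|\hat\grad\hat H\|_{L^2}\lesssim\|\hat\grad\hat H\|_{L^2_{p+1}}+\|\hat\grad^2\hat H\|_{L^2_2}$ with an $O(1)$ constant, and substituting back through Steps~1 and~2 puts $C\|\hat\grad^2\hat H\|_{L^2(L^2_2)}^2$ on the right with $C$ not small --- there is no $\eps$-parameter anywhere to absorb it. The factor $(p-1)$ in the commutator is not small for general $p>1$.

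The paper avoids both issues by proceeding quite differently. Tangential second derivatives are obtained first by testing the weak formulation with $-\partial_k^2\hat H$ for $k<n$, which yields $\|\hat\grad\partial_k\hat H\|_{L^2(L^2_2)}$ directly from ellipticity; Lemma~\ref{L13b} then upgrades this to control of $\|\partial_k\hat H\|_{L^2(L^2)}$. With all tangential and mixed terms thus treated as known right-hand side, the problem reduces to a one-dimensional parabolic equation in the normal variable $x_n$. There the paper freezes the (now scalar) coefficient, localises on a short interval $(0,\delta)$ so that the variable-coefficient error is $O(\delta)$ and hence absorbable, and then performs the change of variables $\check x=x_n^{p+1}$, $\check t=A_*(p+1)^2 t$, which transforms the equation into $\partial_{\check t}\check H-\check x^{-\sigma}\partial_{\check x}(\check x^{\sigma+1}\partial_{\check x}\check H)=\check F$ with $\sigma=\tfrac1{p+1}$. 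For this model operator, parabolic Calder\'on--Zygmund theory in Muckenhoupt-weighted spaces is established in \cite{Koch99,Kienzler16,Seis15+}, giving $\|\partial_{\check t}\check H\|_{L^2_q}+\|\partial_{\check x}\check H\|_{L^2_q}+\|\partial_{\check x}^2\check H\|_{L^2_{q+2}}\lesssim\|\check F\|_{L^2_q}$ for $q=\tfrac{p}{p+1}$; transforming back yields all three of $\|\partial_t\tilde H\|_{L^2_{2p}}$, $\|\partial_x\tilde H\|_{L^2}$, $\|\partial_x^2\tilde H\|_{L^2_2}$ simultaneously with no circularity. The point is that the maximal regularity here is genuinely parabolic and cannot be decoupled into a time-derivative energy plus an independent elliptic estimate without losing the ability to close.
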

}

\begin{proof}

In order to simplify the notation, we drop the hats and tildes from here on. Moreover, we set $G=0$. We will here only give formal arguments. The estimates can be derived rigorously by approximating $F$ smoothly and  using suitable  finite difference quotient approximations for the test functions, see, e.g., Chapter 6.3 in \cite{Evans}.

In a first step, we use (an approximation with smooth cut-offs in time of) $\varphi  = -\chi_{(0,T)}\partial_k^2 H$ for some $k\in \{1,\dots,n-1\}$ as a test function in \eqref{10h}, where $\chi_{(0,T)}$ is the characteristic function for the time interval, and we obtain
\begin{align*}
\MoveEqLeft \iint_{(0,\infty)\times \R^n_+} H\partial_k^2\partial_t (\chi_{(0,T)}H)\, d\mu_{p+1}dt - \iint_{(0,T)\times \R^n_+} \grad (\partial_k^2 H)\cdot A \grad H\, d\mu_2 dt\\
& = - \iint_{(0,T)\times \R^n_+} \partial_k^2 H F\, d\mu_{p+1}dt.
\end{align*}
Multiple integrations by parts then yield
\begin{equation}\label{1000}
\begin{aligned}
\MoveEqLeft \frac12 \int_0^T \frac{d}{dt}\int_{\R^n_+} (\partial_k H)^2\, d\mu_{p+1}dt + \iint_{(0,T)\times \R^n_+} \grad \partial_k H \cdot A\grad \partial_k H\, d\mu_2 dt \\
& =  -\iint_{(0,T)\times \R^n_+} \partial_k^2 H F\, d\mu_{p+1}dt  - \iint_{(0,T)\times \R^n_+} \grad \partial_k H\cdot  (\partial_k A)\grad H\, d\mu_2dt,
\end{aligned}
\end{equation}
and invoking the ellipticity of the matrix $A$, the Cauchy--Schwarz inequality and recalling that $H$ was assumed to have zero initial data yields
\begin{align*}
\|\partial_k H\|_{L^{\infty}(L^2_{p+1})} + \|\grad\partial_k H\|_{L^2(L^2_2)} \lesssim \|\partial_k^2 H\|_{L^2{(L^2_2)}}^{\frac12} \|F\|_{L^2(L^2_{2p})}^{\frac12}+ \|\grad H\|_{L^2(L^2_2)}.
\end{align*}
Via the elementary estimate $ab \lesssim \eps a^2 + \eps^{-1}b^2$, we can control the second order term on the right-hand side. We have thus derived the desired control over the second order tangential and mixed derivatives, namely
\[
\|\partial_k H\|_{L^{\infty}(L^2_{p+1})} + \|\grad\partial_k H\|_{L^2(L^2_2)} \lesssim \|F\|_{L^2(L^2_{2p})}  + \|\grad H\|_{L^2(L^2_2)}.
\]
 Moreover, an application of Lemma \ref{L13b} provides also the control over the first order tangential derivatives, because $p+1>2$ implies:
  $$\|x_n^{-p} \partial_k H\|_{L^2_{2p}}  = \|\partial_k H\|_{L^2} \lesssim \|\partial_k H\|_{L^2_{p+1}} + \|\grad \partial_k H\|_{L^2_2}.$$
    Notice that a replacing the time interval $(0,T)$ in \eqref{1000} by $(t,t+\eps)$  shows also the continuity of $\|\partial_k H\|_{L^2_{p+1}}$ in time.

In order to control the transversal derivatives, it is now enough to focus on the $x_n$ variable, and thus study the one-dimensional problem
\[
\partial_t H - x_n^{-1-p} \partial_n (x_n^2 A_{nn}\partial_n H) = F,
\]
because
 all the other terms that appear in \eqref{10f} are now {known} to belong to $L^2_{2p}$. In order to simplify the notation further, we drop the subscripted $n$'s in the rest of the proof. Furthermore, the problem becomes more accessible if we freeze the diffusivity function $A$ at an arbitrary point $x_*$. That is, we study the equation
\[
\partial_t H - A_* x^{-1-p}\partial_x (x^2 \partial_x H)  = F - x^{-1-p}\partial_x (x^2 (A_*-A)\partial_x H),
\]
where we have set $A_*=A(x_*)$. Thanks to the regularity of $A$, it holds that $|A_*-A(x)|\lesssim \delta $ for $x$ and $x_*$ in the interval $ (0,\delta  )$. We should thus localize the problem further by smuggling in a cut-off function $\eta$ satisfying $\eta=1$ in $(0,\delta )$ and vanishing outside of $(0,2\delta )$. This way, we are led to considering
\begin{align*}
\MoveEqLeft \partial_t (\eta H) - A_* x^{-1-p}\partial_x (x^2 \partial_x (\eta H))\\
&  = \eta F + (A-A_*) x^{-1-p} \partial_x (x^2 \partial_x( \eta H)) -2x^{1-p}\partial_x \eta A\partial_x H\\
&\quad -2 x^{-p}\partial_x \eta AH + x^{1-p}\eta \partial_x A\partial_xH - x^{1-p}\partial_x^2\eta A H,
\end{align*}
and we write $\tilde F$ for the right-hand side for brevity and set $\tilde H  = \eta H$. Now, if we can show that 
\begin{equation}
\label{10i}
\|\partial_t \tilde H\|_{L^2_{2p}} +\|\partial_x \tilde H\|_{L^2} + \|\partial_x^2 \tilde H\|_{L^2_2} \lesssim \|\tilde F\|_{L^2_{2p}},
\end{equation}
the statement follows if $\delta$ is sufficiently small, because 
\[
\|\tilde F\|_{L^2_{2p}} \lesssim \|F\|_{L^2_{2p}} +   \delta \|\partial_x \tilde H\|_{L^2} + \delta  \|\partial_x^2\tilde H\|_{L^2_2} + \frac1{\delta} \|\partial_x H\|_{L^2_2} +\frac1{\delta} \|H\|_{L^2},
\] 
by the regularity of $A$ and the properties of the cut-off function. In view of the interpolation Lemma \ref{L13b}, the $L^2$ norm on $H$ can be replaced by the {$L^2_{2p}$} as in the statement of the lemma.

We have now reduced the multi-dimensional problem with variable coefficients \eqref{10f} to a one-dimension problem with constant coefficient,
\[
\partial_t \tilde H - A_* x^{-1-p}\partial_x (x^2 \partial_x \tilde H) =\tilde F.
\]
We have to do one more transformation in order to arrive at a problem that is better behaved. Indeed, if we change variables $\check x= x^{p+1}$, $\check t = A_* (p+1)^2t$,  $\check{H}(\check t,\check x) = \tilde H(t,x)$ and  $\check{ F}(\check t,\check x) = \tilde F(t,x)$ the above equation turns into
\[
\partial_{\check t} \check H - \check x^{-\sigma} \partial_{\check x}(\check x^{\sigma+1}\partial_{\check x} \check H) = \check F,
\]
where $\sigma = \frac1{p+1}$.
This is precisely the linear version of the parabolic equation that characterizes the porous medium dynamics in a neighborhood of the Barenblatt solution as studied earlier in \cite{Koch99,Kienzler16,Seis15+}. It is well-understood: Calder\'on--Zygmund and Muckenhoupt theory is available and provides estimates
\[
\|\partial_{\check t}\check H\|_{L^2_q} + \|\partial_{\check x}\check H\|_{L^2_{q}} + \|\partial_{\check x}^2 \check H\|_{L^2_{q+2}} \lesssim \|\check F\|_{L^2_q},
\]
for any $q\in(-1,2(\sigma+1)-1)$, see the proofs of Proposition 3.23 in \cite{Kienzler16} or Proposition 4.23 in \cite{Seis15+}. Our choice is $q = \frac{p}{p+1}$, which is equivalent to \eqref{10i}.

\end{proof}

We summarize our findings as follows:
\begin{proposition}[{Linear inhomogenous a priori estimates}]\label{P5}
Let $h$   be a {weak} solution to the linear equation \eqref{10} with zero initial datum and $f\in L^2(L^2_{2p}){\cap L^1(L^2_{p+1}})$. Then for all $T>0$, 
the following holds:
\begin{equation}
\label{15b}
\begin{aligned}
\MoveEqLeft \|\grad h\|_{L^{\infty}((0,T);L^2_{p+1})}+ \|\partial_t h\|_{L^2((0,T);L^2_{2p})} + \|\grad h\|_{L^2((0,T);L^2)} + \|\grad^2 h\|_{L^2((0,T);L^2_{2})}\\
&\lesssim  \|f\|_{L^2((0,T);L^2_{2p})} + {\|h\|_{L^2((0,T);L^2_{2p})}}.
\end{aligned}
\end{equation}
\end{proposition}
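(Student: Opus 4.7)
The strategy is to piece together the halfspace maximal regularity of Lemma~\ref{L14b} with standard interior parabolic estimates through a finite cover of $\bar\Omega$, and then absorb the residual gradient term using a basic energy identity for the linear equation~\eqref{10}. Cover $\bar\Omega$ by finitely many balls $B_{r_j}(x_j)$ centered at boundary points $x_j\in\p\Omega$ together with one open set $\Omega_0\Subset\Omega$, and choose a subordinate smooth partition of unity $\{\eta_j\}_{j=0}^N$. On $\Omega_0$ Lemma~\ref{L13} gives $V\asymp 1$ and $|\grad V|\lesssim 1$, so $\eta_0 h$ satisfies a uniformly parabolic linear equation with smooth bounded coefficients to which classical $L^2$ maximal regularity (e.g.\ Chapter~7 of \cite{Evans}) applies; the resulting interior estimate is equivalent to its weighted counterpart on $\mathrm{supp}\,\eta_0$ since all weights involved in the statement are uniformly comparable to $1$ there.

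On each boundary patch I follow the construction carried out just before Lemma~\ref{L14b}: set $H_j=\eta_j h$, flatten via the diffeomorphism $\phi_j$, and recast the problem into the halfspace form~\eqref{10f} with source $\tilde F_j$ essentially equal to $\eta_j f$ in the flattened coordinates and a commutator $\tilde G_j$ collecting the contributions of $\grad\eta_j$, $\laplace\eta_j$, the zeroth-order term $(p-1)h$ and the flattening Jacobian, each multiplied by appropriate powers of $V$. The key check is that
\[
\|\tilde G_j\|_{L^2(L^2_{2p})}\lesssim \|\grad h\|_{L^2(L^2_2)}+\|h\|_{L^2(L^2_{2p})}:
\]
this is a pointwise computation in which the prefactors $V^{1-p}$ and $V^{-p}$ cancel exactly against the measure $V^{2p}$, leaving only the weights $V^2$ or $V^{2p}$, while $|\grad V|\lesssim 1$ handles the last piece of $G$; the residual $\|h\|_{L^2}$ produced this way is converted to $\|h\|_{L^2_{2p}}+\|\grad h\|_{L^2_2}$ by Lemma~\ref{L13b}. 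Feeding this bound into~\eqref{10g} of Lemma~\ref{L14b}, returning to $\Omega$, and summing over all patches yields
\[
\|\grad h\|_{L^\infty(L^2_{p+1})}+\|\partial_t h\|_{L^2(L^2_{2p})}+\|\grad h\|_{L^2(L^2)}+\|\grad^2 h\|_{L^2(L^2_2)}\lesssim \|f\|_{L^2(L^2_{2p})}+\|h\|_{L^2(L^2_{2p})}+\|\grad h\|_{L^2(L^2_2)}.
\]

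It only remains to absorb $\|\grad h\|_{L^2(L^2_2)}$. Testing~\eqref{10} against $hV^{p+1}$, using the pairing $\int fh\,V^{p+1}\le \|f\|_{L^2_{2p}}\|h\|_{L^2_2}$ together with $\|h\|_{L^2_2}\lesssim \|h\|_{L^2_{p+1}}+\|\grad h\|_{L^2_2}$ from Lemma~\ref{L13b}, and hiding a small multiple of $\|\grad h\|_{L^2_2}^2$ in the dissipation, one obtains
\[
\tfrac{d}{dt}\|h\|_{L^2_{p+1}}^2+\|\grad h\|_{L^2_2}^2\lesssim \|h\|_{L^2_{p+1}}^2+\|f\|_{L^2_{2p}}^2,
\]
from which Gronwall and the zero initial datum imply $\|\grad h\|_{L^2(L^2_2)}\lesssim \|f\|_{L^2(L^2_{2p})}$. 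Inserted in the previous display this closes the estimate. The main obstacle is the careful bookkeeping of the weights in $\tilde G_j$: the cancellations that place every commutator contribution inside $L^2_{2p}$ are tight and depend on exactly the boundary scaling $V\asymp\dist(\cdot,\p\Omega)$ from~\eqref{24}, but once this is verified the remainder of the argument is essentially assembly of known pieces.
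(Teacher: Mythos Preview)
Your proposal is correct and follows essentially the same approach as the paper: a finite cover of $\bar\Omega$ by boundary balls plus an interior set, a subordinate partition of unity, interior estimates via standard parabolic theory, boundary estimates via Lemma~\ref{L14b} after flattening, and finally control of the residual $\|\grad h\|_{L^2(L^2_2)}$ through the energy identity obtained by testing \eqref{10} against $hV^{p+1}$. The paper compresses this last step into the phrase ``the proposition follows from a linear analog of Lemma~\ref{L17},'' and your explicit Gronwall argument is precisely that analog; your careful check that the commutator terms $\tilde G_j$ land in $L^2(L^2_{2p})$ is the content the paper alludes to just before stating Lemma~\ref{L13b}.
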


\begin{proof}
Since $\Omega \subset \R^n$ is bounded,  its boundary can be covered by finitely many open balls,  sufficiently small that
 within each of them,  the boundary can be expressed as a graph over any of its tangent planes.  The complement of these open sets in $\Omega$ can be covered by one additional open set compactly contained in the interior of $\Omega$. Choosing a partition of unity subordinate to this covering,  we flatten the boundary in each of the covering balls and apply Lemma \ref{L14b}.  The analogous estimates in the interior of $\Omega$ follow from standard parabolic estimates and the boundedness of $\log V$. Combining these estimates in the original variables using the partition of unity, the proposition follows from a linear analog of Lemma \ref{L17}.
\end{proof}

This maximal regularity result can be easily combined with the energy estimate for the nonlinear problem.  

\begin{lemma}[Nonlinear smoothing 1]
\label{L18}
Let $h $  be a solution to the nonlinear equation \eqref{eq-relativeerror} with  initial datum $h_0\in L^2_{p+1}$ and assume that $\|h\|_{L^{\infty}}\le \eps$ for some $\eps>0$ small enough.  Then, for any $0<\tau<1<T$ there exists $C=C(\tau,T,n,p,V)$ such that
\begin{equation}
\label{15}
 \|\partial_t h\|_{L^2((\tau,T);L^2_{2p}))} + \|\grad h\|_{L^2((\tau,T);L^2(L^2))} + \|\grad^2 h\|_{L^2((\tau,T);L^2_{2}))} \le  C \|h_0\|_{L^2_{p+1}}.
\end{equation}
\end{lemma}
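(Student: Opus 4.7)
The plan is to re-interpret the nonlinear evolution \eqref{eq-relativeerror} as the linear inhomogeneous equation \eqref{10} with forcing $f = N(h)$, and to apply the maximal regularity estimate of Proposition \ref{P5}, treating the nonlinear contributions perturbatively via the smallness of $\|h\|_{L^{\infty}} \le \eps$. Since Proposition \ref{P5} demands zero initial data whereas $h(0) = h_0$ is nonzero, the first step is to introduce a smooth cutoff $\eta \colon [0,T] \to [0,1]$ with $\eta \equiv 0$ on $[0,\tau/2]$ and $\eta \equiv 1$ on $[\tau, T]$, and to work with $H := \eta h$, which vanishes at $t=0$ and coincides with $h$ on $[\tau, T]$. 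Using \eqref{eq-relativeerror}, the form \eqref{eqL} of $L_V$, and $\partial_t H = \eta \partial_t h + \eta' h$, one checks that $H$ satisfies
\[
\partial_t H - V^{-1-p}\div(V^2 \grad H) = (p-1) H + \eta N(h) + \eta' h,
\]
with zero initial data, so Proposition \ref{P5} applies with forcing $f = \eta N(h) + \eta' h$.

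The next step is to estimate $f$ in $L^2((0,T); L^2_{2p})$. The contribution of $\eta' h$, supported on $[\tau/2, \tau]$, is directly dominated by $\|h\|_{L^2((0,T); L^2_{2p})}$. For the nonlinear part, the pointwise quadratic bound \eqref{51} together with $\|h\|_{L^{\infty}} \le \eps$ gives $|\eta N(h)| \le C_1 \eps (|h| + |\eta \partial_t h|)$; substituting the identity $\eta \partial_t h = \partial_t H - \eta' h$ yields
\[
\|\eta N(h)\|_{L^2((0,T); L^2_{2p})} \lesssim \eps \bigl( \|h\|_{L^2((0,T); L^2_{2p})} + \|\partial_t H\|_{L^2((0,T); L^2_{2p})} + \|\eta' h\|_{L^2((0,T); L^2_{2p})} \bigr).
\]
Plugging this into the a priori estimate of Proposition \ref{P5} and absorbing the $\eps \|\partial_t H\|_{L^2((0,T); L^2_{2p})}$ contribution into the left-hand side (valid once $\eps$ is small enough) will produce
\[
\|\partial_t H\|_{L^2((0,T); L^2_{2p})} + \|\grad H\|_{L^2((0,T); L^2)} + \|\grad^2 H\|_{L^2((0,T); L^2_2)} \lesssim \|h\|_{L^2((0,T); L^2_{2p})}.
\]

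To close the argument, I would invoke Lemma \ref{L17} together with the elementary bound $V^{2p} = V^{p-1}V^{p+1} \lesssim V^{p+1}$ (valid since $p \ge 1$ and $V$ is bounded by Lemma \ref{L13}), which gives $\|h(t)\|_{L^2_{2p}} \lesssim \|h(t)\|_{L^2_{p+1}}$ pointwise in time and hence
\[
\|h\|_{L^2((0,T); L^2_{2p})} \lesssim e^{CT/2} \|h_0\|_{L^2_{p+1}}.
\]
Since $H \equiv h$ on $[\tau, T]$, the control on $H$ descends directly to $h$ on $[\tau, T]$, producing \eqref{15} with a constant depending on $\tau$, $T$, $n$, $p$, and $V$. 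The main technical obstacle is that $N(h)$ itself contains $\partial_t h$, which is one of the quantities the lemma aims to estimate; this self-referential structure is precisely what forces the combined use of the quadratic bound \eqref{51} and the smallness of $\|h\|_{L^{\infty}}$, so that the offending term carries a small prefactor $\eps$ and can be absorbed into the left-hand side.
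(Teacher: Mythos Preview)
Your proposal is correct and follows essentially the same argument as the paper: introduce a temporal cutoff to manufacture zero initial data, apply the linear maximal regularity estimate of Proposition~\ref{P5} to the localized equation, use the pointwise quadratic bound on $N(h)$ together with $\|h\|_{L^\infty}\le\eps$ to absorb the $\eps\,\|\partial_t H\|_{L^2(L^2_{2p})}$ contribution, and close via Lemma~\ref{L17} and the embedding $L^2_{p+1}\hookrightarrow L^2_{2p}$. The only cosmetic differences are notation ($\eta$ versus the paper's $\zeta$, $H$ versus $\zeta h$) and the fact that you make the substitution $\eta\partial_t h = \partial_t H - \eta' h$ explicit.
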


\begin{proof}
We denote by $\zeta$ a smooth cut-off function that is $1$ in the time interval $[\tau, T]$ and zero in $[0,\tau/2]$. We localize the evolution {\eqref{eq-relativeerror}} with the help of this function
\begin{equation}\label{short time cutoff}
\partial_t(\zeta h)   - V^{-1-p}\div(V^2 \grad(\zeta h)) = (p-1)\zeta h   +  h \partial_t \zeta + \zeta N(h),
\end{equation}
and apply the maximal regularity estimate from Proposition \ref{P5} to the effect that
\begin{align*}
\MoveEqLeft \|\partial_t(\zeta h)\|_{L^2(L^2_{2p})} + \|\grad(\zeta h)\|_{L^2(L^2)} + \|\grad^2 (\zeta h)\|_{L^2(L^2_2)}\\
& \lesssim  {\|\zeta h\|_{L^2(L^2_{2p})} + \| h \partial_t \zeta \|_{L^2(L^2_{2p})} +\|
\zeta N(h)\|_{L^2(L^2_{2p})}}.
\end{align*}
Thanks to the particular structure of the nonlinearity {\eqref{eq-problemrelative},}  {the third term on} the right hand side can be estimated by
\begin{align*}
{\|
\zeta N(h)\|_{L^2(L^2_{2p})} \lesssim 
\|h \partial_t \zeta\|_{L^2(L^2_{2p})} + 
\|\zeta h\|_{L^2(L^2_{2p})}  + \eps \|\partial_t(\zeta h)\|_{L^2(L^2_{2p})}.}
\end{align*}
Using the fact that $L^2_{p+1}$ embeds continuously into $L^2_{2p}$ by the virtue of \eqref{24}, the above estimates combine with the pointwise bounds from Lemma \ref{L17} to give 
\begin{align*}
\MoveEqLeft \|\partial_t(\zeta h)\|_{L^2(L^2_{2p})} + \|\grad(\zeta h)\|_{L^2(L^2)} + \|\grad^2 (\zeta h)\|_{L^2(L^2_2)} \\
&\le C\left(\eps \|\partial_t(\zeta h)\|_{L^2(L^2_{2p})} + \|h_0\|_{L^2_{p+1}}\right).
\end{align*}
Choosing $\eps$ small enough  and invoking the properties of the cut-off function yields the statement of the lemma.
\end{proof}

Before turning to higher-order  derivatives, we {use integration by parts to establish a class of interpolation inequalities which will allow us to control the effects of the nonlinearity.}

\begin{lemma}[Interpolation]
\label{L16}
Let $\psi\in C^{\infty}_c(\R^n)$ be given, $q\ge2$ and $k,\ell\in \N$ with $k>\ell$. Then it holds
\[
\|\psi |D^{\ell} h|^{\frac{k}{\ell}}\|_{L^q} \lesssim \|h\|_{L^{\infty}}^{\frac{k-\ell}{\ell}} \sum_{m=0}^{k-1}  \|D^m\psi D^{k-m}h\|_{L^q}.
\]
\end{lemma}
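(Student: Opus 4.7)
The inequality is a localized Gagliardo--Nirenberg interpolation, controlling the intermediate derivative norm $\|\psi|D^\ell h|^{k/\ell}\|_{L^q}$ by the endpoint norm $\|h\|_\infty$ and (cutoff versions of) $\|D^k h\|_{L^q}$. I would prove it by induction on $k$ with all admissible $\ell<k$ handled simultaneously, the workhorse being integration by parts applied to
\[
A := \int \psi^q |D^\ell h|^{qk/\ell}\, dx = \|\psi|D^\ell h|^{k/\ell}\|_{L^q}^q.
\]
Since $qk/\ell \ge 2$ whenever $q \ge 2$ and $k \ge \ell$, I can legitimately peel off one factor of $D^\ell h$ and rewrite it as $\partial_i D^{\ell-1}h$ for some direction $i$. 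Integrating by parts in $\partial_i$ redistributes this derivative onto the rest of the integrand, producing three classes of terms: the derivative can land on $\psi^q$ (yielding $\psi^{q-1}|D\psi|\,|D^\ell h|^{qk/\ell-1}|D^{\ell-1}h|$), on the power $|D^\ell h|^{qk/\ell-2}$ via the chain rule (yielding $\psi^q|D^\ell h|^{qk/\ell-3}|D^{\ell+1}h|\,|D^\ell h|\,|D^{\ell-1}h|$), or on a sister factor $D^\ell h$ (yielding $\psi^q|D^\ell h|^{qk/\ell-2}|D^{\ell+1}h|\,|D^{\ell-1}h|$). All three are schematically of the form $|D^{\ell-1}h|\cdot(\text{weight})$.

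Next I apply H\"older with conjugate exponents $(q/(q-1),\,q)$ to each integral, gathering the high-weight factor into $A^{(q-1)/q}$ and leaving an $L^q$-norm of a cofactor involving $D^{\ell-1}h$ together with either $D\psi$ or $\psi D^{\ell+1}h$. Dividing through by $A^{(q-1)/q}$ and absorbing yields
\[
A^{1/q} \lesssim \|D\psi\, D^{\ell-1}h\|_{L^q} + \|\psi D^{\ell+1}h\cdot D^{\ell-1}h\|_{L^q}.
\]
The residual $|D^{\ell-1}h|$ factor is controlled inductively: when $\ell=1$ it is $h$ itself, bounded by $\|h\|_\infty$ pointwise, and iterating the scheme $k-1$ times yields the stated bound with exponent $\|h\|_\infty^{k-1}$ on $\|h\|_\infty$; when $\ell \ge 2$, I apply the inductive hypothesis at level $(\ell-1, k)$ to interpolate $|D^{\ell-1}h|^{k/(\ell-1)}$-type expressions by $\|h\|_\infty^{(k-\ell+1)/(\ell-1)}$ times $\sum_m \|D^m\psi\,D^{k-m}h\|_{L^q}$. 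Combining the two contributions' $\|h\|_\infty$-exponents gives exactly $(k-\ell)/\ell$; the extra factor $D\psi$ or $\psi D^{\ell+1}h$ gets Leibniz-absorbed into the sum on the right, since differentiation only raises the order of the cutoff derivative, and a $D^{\ell+1}h$ combined with a $D^{k-m}h$ via the induction still sits in the window $1 \le k-m \le k$.

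\textbf{Main obstacle.} The bookkeeping of H\"older exponents through the induction: I must check that (i) the $A^{(q-1)/q}$ self-reference absorbs cleanly at every integration-by-parts round, (ii) after $k-\ell$ rounds the accumulated power of $\|h\|_\infty$ is precisely $(k-\ell)/\ell$ rather than only an ineffective bound, and (iii) every cross term created when derivatives fall on $\psi$ (generating $D^m\psi$ with $m \ge 1$) or when derivatives cross the exponent $qk/\ell-2$ (generating $D^{\ell+1}h$, $D^{\ell+2}h$, etc.) is matched by one of the prescribed terms $\|D^m\psi\,D^{k-m}h\|_{L^q}$ with $0 \le m \le k-1$. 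The non-integer exponent $qk/\ell$ is a minor irritant but not a real obstruction, since chain-rule derivatives of $|D^\ell h|^s$ produce the same structure as those of $|D^\ell h|^2$ up to acceptable constants depending on $s$, and the resulting $D^{\ell+1}h$ factors are already of the type that fits the target sum.
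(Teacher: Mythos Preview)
Your starting move---integrate by parts to trade one copy of $D^\ell h$ for $D^{\ell-1}h$ at the cost of a $D^{\ell+1}h$ or a $D\psi$---is exactly what the paper does. The gap is in the H\"older step. Your displayed inequality
\[
A^{1/q} \lesssim \|D\psi\, D^{\ell-1}h\|_{L^q} + \|\psi\, D^{\ell+1}h\, D^{\ell-1}h\|_{L^q}
\]
is dimensionally wrong: under $h\mapsto\lambda h$ the left side scales like $\lambda^{k/\ell}$ while the right side scales like $\lambda$ and $\lambda^2$. If you carry out the two-exponent H\"older you describe, what actually remains after extracting $A^{(q-1)/q}$ is $\|D\psi\,|D^{\ell-1}h|\,|D^\ell h|^{k/\ell-1}\|_{L^q}$ and $\|\psi\,|D^{\ell-1}h|\,|D^{\ell+1}h|\,|D^\ell h|^{k/\ell-2}\|_{L^q}$; the residual power $k/\ell-2$ is negative whenever $k<2\ell$, so this split does not even make sense in general.

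The paper instead applies a \emph{three}-factor H\"older, matching each of $|D^{\ell-1}h|$, $|D^{\ell+1}h|$, $|D^\ell h|^{kq/\ell-2}$ to the quantities $C(k,j):=\big(\int|\psi|^q|D^jh|^{kq/j}\big)^{j/(kq)}$ for $j=\ell-1,\ell+1,\ell$; all exponents are positive because $kq>2\ell$ when $q\ge2$. After Young's inequality this gives a clean recursion
\[
C(k,\ell)\lesssim C(k,\ell-1)^{1/2}C(k,\ell+1)^{1/2}+C(k,\ell-1)^{\ell/(k+\ell-1)}A(k,\ell,1)^{(k-1)/(k+\ell-1)},
\]
where $A(k,\ell,1)$ is the same object with $\psi$ replaced by $D\psi$. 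The induction is then genuinely double: the outer induction on $k$ controls $A(k,\ell,1)$ (which is the $k-1$ case for the cutoff $D\psi$), and the inner induction on $\ell$ handles the two-sided recursion in $C(k,\ell\pm1)$ via Young absorption. Your sketch conflates these two layers---you invoke ``level $(\ell-1,k)$'' as an inductive hypothesis while claiming induction on $k$---and without the three-term H\"older there is no recursion to iterate.
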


\begin{proof}
To keep the notation as simple as possible, we perform a rather symbolic calculation. That is, we write $\partial^m$ for some partial derivative of $m$th order, $\partial^m = \partial_x^{\alpha}$ with $|\alpha|=m$. In our argument, the precise value of $\alpha$ is not of importance. 

We start with an integration by parts to notice that
\begin{align*}
&\int |\psi|^q |\partial^{\ell} h|^{\frac{kq}{\ell}}\, dx 
\\& = \int |\psi|^q \partial^{\ell} h \partial^{\ell} h |\partial^{\ell} h|^{\frac{kq-2\ell}{\ell}}\, dx\\
& \lesssim \int |\psi|^q |\partial^{\ell-1}h||\partial^{\ell+1}h||\partial^{\ell}h|^{\frac{kq-2\ell}{\ell }}\, dx
+ \int |\psi|^{q-1} |\partial \psi| |\partial^{\ell-1}h| |\partial^{\ell}h|^{\frac{kq-\ell}{\ell}}\, dx 
 \\& \lesssim  \left(\int |\psi|^q |\partial^{\ell-1} h|^{\frac{kq}{\ell-1}}\, dx\right)^{\frac{\ell-1}{kq}}\left(\int |\psi|^q |\partial^{\ell+1} h|^{\frac{kq}{\ell+1}}\, dx\right)^{\frac{\ell+1}{kq}}
 \left(\int |\psi|^q |\partial^{\ell}h|^{\frac{kq}{\ell}}\, dx\right)^{\frac{kq-2\ell}{kq}}\\
 &+ \left(\int |\psi|^q |\partial^{\ell-1} h|^{\frac{kq}{\ell-1}}\, dx\right)^{\frac{\ell-1}{kq}}\left(\int |\psi|^q |\partial^{\ell} h|^{\frac{kq}{\ell}}\, dx\right)^{\frac{kq-k-\ell+1}{kq}}
 \left(\int |\partial\psi|^q |\partial^{\ell}h|^{\frac{(k-1)q}{\ell}}\, dx\right)^{\frac{1}{q}},
\end{align*}
where the last estimates follow from H\"older's inequality. Here, we employ the convention that
\[
\left(\int |\psi|^q |\partial^{\ell-1} h|^{\frac{kq}{\ell-1}}\, dx\right)^{\frac{\ell-1}{kq}} = \|h\|_{L^{\infty}}\quad \mbox{for }\ell=1.
\]
Applying Young's inequality in the form $ab \lesssim \eps a^{\frac1{\theta}} + C_{\eps,\theta} b^{\frac1{1-\theta}}$ for some arbitrarily small $\eps$ and $\theta\in (0,1)$ to the right hand side yields
\begin{equation}\begin{aligned}\label{eq-youngs}
\int |\psi|^q  |\partial^{\ell} h|^{\frac{kq}{\ell}}\,dx & \lesssim \left(\int |\psi|^q |\partial^{\ell-1} h|^{\frac{kq}{\ell-1}}\, dx\right)^{\frac{\ell-1}{2\ell}}\left(\int |\psi|^q |\partial^{\ell+1}h|^{\frac{kq}{\ell+1}}\, dx\right)^{\frac{\ell+1}{2\ell}}\\
&\quad + \left(\int |\psi|^q |\partial^{\ell-1} h|^{\frac{kq}{\ell-1}}\, dx\right)^{\frac{\ell-1}{k+\ell-1}}\left(\int |\partial \psi|^q |\partial^{\ell}h|^{\frac{(k-1)q}{\ell}}\, dx\right)^{\frac{k}{k+\ell-1}}.\end{aligned}
\end{equation} 

It remains to apply an iteration procedure. For this purpose, we set for $k\ge \ell+m$,
\begin{align*}
A(k,\ell,m) &:= \left(\int |D^m\psi|^q |D^{\ell}h |^{\frac{(k-m)q}{\ell}}\, dx\right)^{\frac{\ell}{(k-m)q}},\\
B(\ell,m) & : = A(\ell+m,\ell,m),\\
C(k,\ell) &: = A(k,\ell,0)
\end{align*}
and notice that $
A_0:= A(k,0,m) = \|h\|_{L^{\infty}}$. Upon rescaling $h$, we may assume from here on that $A_0=1$. (We may always assume $A_0 \neq 0$ as otherwise the lemma is vacuously true.) 
With this notation, the previous estimate becomes
\begin{equation}
\label{29}
C(k,\ell) \lesssim C(k,\ell-1)^{\frac12}C(k,\ell+1)^{\frac12} + C(k,\ell-1)^{\frac{\ell}{k+\ell-1}} A(k,\ell,1)^{\frac{k-1}{k+\ell-1}},
\end{equation}
and the statement of the lemma can be rephrased as 
\begin{equation}
\label{28}
C(k,\ell)^{\frac k \ell}\lesssim \sum_{m=0}^{k-1} B(k-m,m),
\end{equation} for every $k$ and $1\le \ell \le k-1$. 

The proof will be a double-induction on $(k,\ell)$. We start by noticing that for $k=2$ and $\ell=1$, our objective \eqref{28} is nothing but the estimate \eqref{29} just proven.  Suppose $k\ge 3$ is fixed and that \eqref{28} holds true for  $k-1$ and any $\ell\le k-2$.  Our goal is to show \eqref{28} for fixed $k$ and all $1\le \ell \le k-1$.

We first need some auxiliary inequalities. Note for $\varphi = D\psi$, it holds that
\begin{align}
A(k,\ell,m)  
&= \left(\int |D^{m-1}\varphi|^q |D^{\ell}h|^{\frac{((k-1)- (m-1))q}{\ell}} \, dx\right)^{\frac{\ell}{((k-1)- (m-1))q}} 
\\&=: \tilde A(k-1,\ell,m-1)
\end{align}
and since the estimate in \eqref{29} is independent of the choice  $\psi$, the inductive hypothesis \eqref{28} allows us to estimate
\begin{align*}
 \tilde  A(k-1,\ell,0)^{\frac{k-1}{\ell}}
  =:  \tilde C(k-1,\ell)^{\frac{k-1}{\ell}}
& \lesssim      \sum_{m=0}^{k-1-1} \tilde B(k-1-m,m) 
\\& := \sum_{m=0}^{k-1-1} \tilde  A(k-1,k-1-m,m)
\end{align*}
or
\[
A(k,\ell,1))^{\frac{k-1}{\ell }} \lesssim \sum_{m=1}^{k-1} 
B(k-m,m),
\]
for any $\ell\le k-2$. Plugging this bound into \eqref{29} gives
\begin{equation}
\label{30}
\begin{aligned}
C(k,\ell) & \lesssim C(k,\ell-1)^{\frac12} C(k,\ell+1)^{\frac12} \\
&\quad + C(k,\ell-1)^{\frac{\ell}{k+\ell-1}} 
\left(\black{\sum_{m=1}^{k-1} B(k-m,m)}\right)^{\frac{\ell}{k+\ell-1}}.
\end{aligned}
\end{equation}

We claim that this estimate implies 
\begin{equation}\label{30a}
C(k,\ell)^{\frac{k}{\ell}} \lesssim C(k,\ell+1)^{\frac{k}{\ell+1}} + \sum_{m=1}^{k-1} B(k-m,m)
\end{equation}
for any $1\le \ell\le k-1$. Indeed, the case $\ell =1$ follows directly from \eqref{30} because $C(k,0)=A_0=1$.  The general case follows by induction: We suppose that \eqref{30a} is proved for $1,2,\dots,\ell-1$ and we aim at establishing it for $\ell$. For this purpose, we use Young's inequality  in \eqref{30} to the effect that
\[
C(k,\ell)^{\frac{k}{\ell}} \lesssim \eps C(k,\ell-1)^{\frac{k}{\ell-1}} + C(k,\ell+1)^{\frac{k}{\ell+1}} + \sum_{m=1}^{k-1}B(k-m,m),
\]
for some arbitrary $\eps$. Invoking the hypothesis that \eqref{30a} holds true for $\ell-1$, we then deduce
\[
C(k,\ell)^{\frac{k}{\ell}} \lesssim \eps C(k,\ell)^{\frac{k}{\ell}} + C(k,\ell+1)^{\frac{k}{\ell+1}} + \sum_{m=1}^{k-1}B(k-m,m),
\]
which gives \eqref{30a} for $\ell$ if $\eps$ is chosen small enough.

It remains to iterate \eqref{30a} to find
\[
C(k,\ell)^{\frac{k}{\ell}} \lesssim C(k,k) + \sum_{m=1}^{k-1} B(k-m,m) = \sum_{m=0}^{k-1} B(k-m,m),
\]
which is what we aimed to prove, cf.~\eqref{28}.
\end{proof}

We will now perform an intermediate  step towards higher-order regularity estimates by lifting the norms on the left-hand side in \eqref{15} to the next order in time. Higher-order time derivatives will be considered subsequently simultaneously with suitable higher-order spatial derivatives.  The intermediate step that we take in the following lemma  is necessary in order to control lower-order error terms that appear later as a result of a  transformation of the equation close to the boundary, see \eqref{21} below. 

\begin{lemma}[Nonlinear smoothing 2]
\label{L19}
Let $h$ be a solution to the equation \eqref{eq-relativeerror} with initial datum $h_0\in L^2_{p+1}$ and assume that $\|h\|_{L^{\infty}}\le \eps$ for some $\eps>0$. Then if $\eps$ is small enough and $0<\tau<1<T$, it holds that
\begin{equation}
\label{33}
 \|\partial_t^2 h\|_{L^2((\tau,T);L^2_{2p}))} + \|\grad\partial_t  h\|_{L^2((\tau,T);L^2(L^2))} + \|\grad^2\partial_t h\|_{L^2((\tau,T);L^2_{2}))}\lesssim  \|h_0\|_{L^2_{p+1}}.
\end{equation}
\end{lemma}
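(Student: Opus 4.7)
The proof mirrors Lemma \ref{L18}, now applied at the level of $u:=\partial_t h$. The Jin--Xiong regularity \eqref{200} permits classical differentiation of \eqref{eq-relativeerror} in time, producing
\[
\partial_t u - V^{-1-p}\div(V^2\grad u) = (p-1)u + \partial_t N(h).
\]
Choose a smooth time-cutoff $\zeta$ with $\zeta\equiv 1$ on $[\tau,T]$ and supported in $(\tau/3, T+1)$. Then $\zeta u$ solves a linear problem of the form \eqref{10} with zero initial data and inhomogeneity $\zeta\,\partial_t N(h) + u\,\partial_t\zeta$, so Proposition \ref{P5} yields
\[
\|\partial_t(\zeta u)\|_{L^2(L^2_{2p})} + \|\grad(\zeta u)\|_{L^2(L^2)} + \|\grad^2(\zeta u)\|_{L^2(L^2_2)} \lesssim \|\zeta u\|_{L^2(L^2_{2p})} + \|u\,\partial_t\zeta\|_{L^2(L^2_{2p})} + \|\zeta\,\partial_t N(h)\|_{L^2(L^2_{2p})}.
\]
The first two right-hand terms are $\lesssim \|h_0\|_{L^2_{p+1}}$ by Lemma \ref{L18} applied on the slightly enlarged window $(\tau/3, T+1)$.

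To bound the nonlinear forcing I would exploit the identity $N(h)=M_V(h)$ from \eqref{32a}, which trades temporal for spatial derivatives. Writing $M_V(h) = a(h) + b(h)\,L_V h$ with $a(h) = (1+h)^{1-p}[(1+h)^p-1-ph]$ and $b(h) = 1 - (1+h)^{1-p}$, Taylor expansion at the origin gives $a(0) = a'(0) = b(0) = 0$, so $|a'(h)|, |b(h)| \lesssim \epsilon$ while $|b'(h)| \lesssim 1$. Since $L_V$ is $t$-independent, differentiating $M_V(h)$ in time produces
\[
\partial_t N(h) = a'(h)\,u + b'(h)\,u\,L_V h + b(h)\,L_V u.
\]
The $a'(h)\,u$ term contributes $\lesssim \epsilon\|\zeta u\|_{L^2(L^2_{2p})}$, and the $b(h)\,L_V u$ term contributes $\lesssim \epsilon\bigl(\|\grad^2(\zeta u)\|_{L^2(L^2_2)} + \|\grad(\zeta u)\|_{L^2(L^2)} + \|\zeta u\|_{L^2(L^2_{2p})}\bigr)$ after expanding $L_V$ via \eqref{eqL} and invoking Lemma \ref{L13} (noting $L_V(\zeta u) = \zeta L_V u$). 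For $\epsilon$ small enough, both are absorbed into the left-hand side.

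The main obstacle is the genuinely quadratic middle term $b'(h)\,u\,L_V h$, which carries no $\epsilon$-factor. To handle it I would exploit the pointwise consequence of \eqref{eq-relativeerror},
\[
u = (1+h)^{1-p}\bigl[(1+h)^p - 1 - ph\bigr] - (1+h)^{1-p}\,L_V h,
\]
so that $|u|\lesssim h^2 + |L_V h|$ and hence $|u\,L_V h| \lesssim \epsilon|h||L_V h| + (L_V h)^2$. The first piece is bounded by $\epsilon\|h\|_{L^\infty}\|L_V h\|_{L^2(L^2_{2p})} \lesssim \epsilon\|h_0\|_{L^2_{p+1}}$ via Lemma \ref{L18}. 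The quadratic $(L_V h)^2$ contribution is the most delicate; I would control it through an interpolation argument in the spirit of Lemma \ref{L16}, trading the $L^2(\dot H^2_2)$ regularity of $h$ from Lemma \ref{L18} (together with the qualitative classical regularity \eqref{200}) for the extra integrability needed to estimate $(L_V h)^2$ in $L^2(L^2_{2p})$. Collecting the estimates, absorbing the $\epsilon$-perturbations, and removing the cutoff yields the claim.
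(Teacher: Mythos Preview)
Your overall architecture matches the paper's: localize in time, differentiate, apply Proposition~\ref{P5}, absorb the $\eps$-small pieces, and handle the genuinely quadratic remainder by an interpolation that trades it against the left-hand side. The gap is in your treatment of that remainder.

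You reduce $b'(h)\,u\,L_Vh$ to $(L_Vh)^2$ and then propose to bound $\|\zeta(L_Vh)^2\|_{L^2(L^2_{2p})}$ by ``an interpolation argument in the spirit of Lemma~\ref{L16}, trading the $L^2(\dot H^2_2)$ regularity of $h$ from Lemma~\ref{L18}.'' This does not close. The term $(L_Vh)^2$ is, up to lower order, a square of second spatial derivatives; applying Lemma~\ref{L16} with $\ell=2$ and $k/\ell=2$ forces $k=4$ and hence requires control of \emph{fourth}-order spatial derivatives of $h$, which are unavailable at this stage (and are in fact not expected in any unweighted sense, given the limited transversal regularity near $\partial\Omega$). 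The qualitative input~\eqref{200} concerns only time derivatives and cannot supply the missing spatial information.

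The fix is immediate and brings you back to the paper's mechanism: instead of bounding $|u|$ by $|L_Vh|$, do the reverse. From the same pointwise identity (equivalently, from the equation $L_Vh=-\partial_t h+N(h)$) one has $|L_Vh|\lesssim|u|+h^2$, hence $|u\,L_Vh|\lesssim u^2+\eps|h||u|$. The quadratic remnant is now $(\partial_t h)^2$, and Lemma~\ref{L16} in the \emph{time} variable ($\ell=1$, $k=2$) gives
\[
\|\zeta(\partial_t h)^2\|_{L^2(L^2_{2p})}\lesssim \|h\|_{L^\infty}\bigl(\|\zeta\,\partial_t^2 h\|_{L^2(L^2_{2p})}+\|\partial_t\zeta\,\partial_t h\|_{L^2(L^2_{2p})}\bigr),
\]
an $\eps$-multiple of the left-hand side plus a term controlled by Lemma~\ref{L18}. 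This is exactly the paper's route: there one works with $N(h)$ directly rather than $M_V(h)$, and differentiating $N(h)$ in time yields only the contributions $|h||\partial_t h|$, $(\partial_t h)^2$, and $|h||\partial_t^2 h|$, so no $L_Vh$ factor ever appears and the detour through $(L_Vh)^2$ is avoided from the outset.
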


\begin{proof}
{Regularity in time was proved already by Jin and Xiong, see \eqref{200} above. In order to get control over the mixed derivatives, we proceed carefully by considering finite difference quotients $d_t^s h (t) = s^{-1}(h(t+s)-h(s))$. 
We consider the same cut-off function in time as in the proof of Lemma \ref{L18}. Then localizing the nonlinear equation and ``differentiating'' \eqref{short time cutoff} 
with respect to time, we obtain
\begin{align*}
\MoveEqLeft \partial_t d_t^s (\zeta h) -V^{1-p}\div(V^2\grad d_t^s(\zeta h))  \\
&= (p-1)d_t^s (\zeta h) + d_t^s h \partial_t \zeta + hd_t^s \partial_t  \zeta  +\zeta d_t^s N(h) +  N(h) d_t^s \zeta.
\end{align*}
}
Regarding the nonlinear terms, we notice that
\[
|N(h)| \lesssim |h|^2 + |h||\partial_t h|
\]
and 
\[
|d_t^s {N}(h)| \lesssim |h||d_t^s h| + |d_t^sh||\partial_t h| + |h||\partial_t d_t^s h|.
\]
Therefore, using $|h|\le \eps\le 1$ and making use of the maximal regularity estimate for the linear problem, Proposition \ref{P5}, we find that 
\begin{align*}
\MoveEqLeft\|\partial_t d_t^s(\zeta h)\|_{L^2(L^2_{2p})} + \|\grad d_t^s(\zeta h)\|_{L^2(L^2)} + \|\grad^2 d_t^s (\zeta h)\|_{L^2(L^2_2)}\\
 &\lesssim \|\chi_{\spt\zeta}  h\|_{L^2(L^2_{2p})} + \|\chi_{\spt\zeta} d_t^s h\|_{L^2(L^2_{2p})}\\
 &\quad  + \|\zeta d_t^sh\partial_t h\|_{L^2(L^2_{2p})} + \eps \|d_t^s\partial_t(\zeta h)\|_{L^2(L^2_{2p})} .
\end{align*}
If $\eps$ is sufficiently small, the last term on the right-hand side can be absorbed into the left-hand side. Moreover, the (remaining) expressions on the right-hand side are bounded uniformly in $s$ by the virtue of Jin and Xiong's regularity statement \eqref{200}. We may thus pass to the limit $s\to 0$ and find 
\begin{align*}
\MoveEqLeft\|\partial_t^2(\zeta h)\|_{L^2(L^2_{2p})} + \|\grad \partial_t (\zeta h)\|_{L^2(L^2)} + \|\grad^2 \partial_t (\zeta h)\|_{L^2(L^2_2)}\\
 &\lesssim \|\chi_{\spt\zeta}  h\|_{L^2(L^2_{2p})} + \|\chi_{\spt\zeta} \partial_t h\|_{L^2(L^2_{2p})}+ \|\zeta  (\partial_t h)^2\|_{L^2(L^2_{2p})} .
\end{align*}
Now, applying the interpolation Lemma \ref{L16} in the form
\[
\|\zeta |\partial_t h|^2 \|_{L^2(L^2_{2p})} \lesssim \|h\|_{L^{\infty}} \|\zeta \partial_t^2 h\|_{L^2(L^2_{2p})} + \|h\|_{L^{\infty}}\|\partial_t\zeta\partial_t h\|_{L^2(L^2_{2p})},
\]
and using again that $|h|\le\eps\le 1$ leads us to the estimate
\begin{align*}
\MoveEqLeft
\|\partial_t^2(\zeta h)\|_{L^2(L^2_{2p})} + \|\grad \partial_t(\zeta h)\|_{L^2(L^2)} + \|\grad^2 \partial_t (\zeta h)\|_{L^2(L^2_2)}\\
 &\lesssim \|\chi_{\spt\zeta}  h\|_{L^2(L^2_{2p})} + \|\chi_{\spt\zeta} \partial_t h\|_{L^2(L^2_{2p})}   ,
\end{align*}
provided that $\eps$ is sufficiently small. Since $\mu_{2p}\lesssim \mu_{p+1}$ by the virtue of Lemma \ref{L13}, we can now apply Lemmas \ref{L17} and \ref{L18} and deduce the statement of the lemma by the properties of the cut-off function.
\end{proof}

Similarly to the derivation of the maximal regularity estimate in Proposition~\ref{P5}, the  derivation of higher-order regularity estimates requires attention only in a neighborhood of the boundary. Indeed, in the interior the equation is parabolic with smooth coefficients, and thus, higher-order estimates in the interior  just follow by standard iterative arguments based on the maximal regularity estimate from Proposition \ref{P5}. As before, we shall thus focus on the boundary from here on. We choose essentially the same notation as in the proof of Proposition \ref{P5} and we fix $x_0\in \partial\Omega$ arbitrarily and let $\eta$ denote a cut-off function on $\R^n$ interpolating smoothly between $\eta=1$ in $B_r(x_0)$ and $\eta = 0$ outside of $B_{2r}(x_0)$. 
  Moreover, as in the proofs of Lemmas \ref{L18} and \ref{L19}, we have to introduce a cut-off function $\zeta$ defined on $[0,\infty)$ that satisfies $\zeta=0$ in $[0,\tau/2]$ and $\zeta=1$ in $[\tau,T]$ for some $0<\tau<1<T$. Treating the nonlinearity $N(h(x))=:f(x)$ as an inhomogeneity  and smuggling $\zeta \eta$ into the equation, we find that $H = \zeta \eta h$ satisfies
\begin{equation}\label{21}
\partial_t H - V^{-1-p}\div(V^2\grad H) = (p-1)H +  F+G,
\end{equation}
where 
\[
F = \zeta \eta f,\quad G =  -2V^{1-p}\zeta \grad\eta\cdot \grad  h - V^{1-p}\zeta h \laplace \eta   - 2\zeta V^{-p} h \grad V \cdot \grad \eta  + \zeta' \eta h.
\]
We now apply the same diffeomorphism $\phi$ that we used in order to transform the elliptic problem \eqref{10d} into the half-space problem \eqref{10f}, and arrive at
\begin{equation}
\label{18}
\partial_t \hat H  - \hat V^{-1-p}\hat\grad\cdot(\hat V^2 A\hat\grad\hat H)   = (p-1)\hat H +\hat F + \hat G.
\end{equation}
We will now derive control on higher-order derivatives for equation \eqref{18}. As before, we interpret the weighted Lebesgue norms  with respect to the simpler weight $\hat x_n$ and we consider
$L^r(L^2_{q}) = L^r((0,T);L^2(\hat \mu_q))$  with measure $d\hat \mu_q  = \hat x_n^q\, d\hat x$ {and typically $r=2$}. Moreover, we write $z=(t,\hat x')$
{for the time and flattened tangential variables, whereas $\hat \nabla$ denotes the full spatial gradient (tangential and normal) in flattened coordinates}.

\begin{lemma}[Tangential smoothing by the linear inhomogeneous evolution]
\label{L10}
Let $\hat H$ be a solution to the transformed equation {\eqref{18}}. For any $k\in\N$ and $\alpha'\in\N_0^{n}$ {with $|\alpha'|=k$}, it holds that
\begin{equation}
\label{20}
\begin{aligned}
\MoveEqLeft \|D_z^k\partial_t  \hat H\|_{L^2(L^2_{2p})} + \|D_z^k  \hat \grad \hat H\|_{L^2(L^2)} + \|D_z^k   \hat \grad^2 \hat H\|_{L^2(L^2_{2})}\\
& \lesssim  \sum_{\ell=0}^k \|D_z^{\ell}\hat F\|_{L^2(L^2_{2p})}  +  \sum_{\ell=0}^k \|D_z^{\ell} \hat G\|_{L^2(L^2_{2p})}.
\end{aligned}
\end{equation}
\end{lemma}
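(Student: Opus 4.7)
The plan is to argue by induction on $k$.  For the base case $k=0$, apply Lemma \ref{L14b} directly to \eqref{18}, taking the forcing to be $\tilde F = \hat F + (p-1)\hat H$ and $\tilde G = \hat G$; the bound \eqref{10g} then yields the desired left-hand side of \eqref{20} up to the auxiliary terms $\|\hat\grad \hat H\|_{L^2(L^2_2)} + \|\hat H\|_{L^2(L^2_{2p})}$.  These are absorbed by a standard energy estimate: testing \eqref{18} against $\hat H\,d\hat\mu_{p+1}$, using the ellipticity of $A$, the vanishing of $\hat H$ at $t=0$ (built into the cut-off $\zeta$ inside both $\hat F$ and $\hat G$), Gronwall, and the Hardy-type embedding of Lemma \ref{L13b}, one obtains $\|\hat H\|_{L^2(L^2_{2p})} + \|\hat\grad \hat H\|_{L^2(L^2_2)} \lesssim \|\hat F\|_{L^2(L^2_{2p})} + \|\hat G\|_{L^2(L^2_{2p})}$, completing the case $k=0$.

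For the inductive step, fix $k\ge 1$ and a tangential derivation $\partial_z \in \{\partial_t,\partial_{\hat x_1},\ldots,\partial_{\hat x_{n-1}}\}$; set $\tilde H = \partial_z \hat H$.  Differentiating \eqref{18} yields an equation of the same form,
\[
\partial_t \tilde H - \hat V^{-1-p}\hat\grad\cdot(\hat V^2 A\hat\grad \tilde H) = (p-1)\tilde H + \partial_z \hat F + \partial_z \hat G + R_z,
\]
with commutator
\[
R_z = -\partial_z(\hat V^{-1-p})\,\hat\grad\cdot(\hat V^2 A\hat\grad \hat H) - \hat V^{-1-p}\hat\grad\cdot\bigl(\partial_z(\hat V^2 A)\hat\grad \hat H\bigr).
\]
When $\partial_z = \partial_t$, $R_z = 0$ since $\hat V$ and $A$ are $t$-independent.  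When $\partial_z = \partial_{\hat x_i}$ for $i<n$, the crucial structural input is Lemma \ref{L14}: tangential derivatives of $\hat V$ satisfy $|\partial_{\hat x_i}^{\ell}\hat V|\lesssim \hat V$, hence $|\partial_z(\hat V^{-1-p})|\lesssim \hat V^{-1-p}$ and $|\partial_z(\hat V^2 A)|\lesssim \hat V^2$, so $R_z$ inherits the weight scaling of the principal part.  Expanding the divergence then gives
\[
\|R_z\|_{L^2(L^2_{2p})} \lesssim \|\hat\grad^2 \hat H\|_{L^2(L^2_2)} + \|\hat\grad \hat H\|_{L^2(L^2)} + \|\hat H\|_{L^2(L^2_{2p})},
\]
which is controlled by the $k=0$ bound already proved.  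Applying the $k=0$ estimate to the equation for $\tilde H$ yields \eqref{20} for $k=1$.  Iterating the procedure, at stage $\ell$ the commutators involve tangential derivatives of $\hat V$ and $A$ up to order $\ell$ multiplying at most second-order spatial derivatives of $\hat H$ with up to $k-\ell$ further tangential derivations, all dominated by the inductive hypothesis; this yields \eqref{20} for arbitrary $k$.

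The principal obstacle is the commutator analysis, which is compatible with the singular weight $\hat V$ only in tangential directions---normal derivatives of $\hat V$ do not vanish at the boundary and would destroy the weight balance, which is why \eqref{20} only quantifies smoothing in $z=(t,\hat x')$.  A secondary technical point, as in Lemma \ref{L19}, is that the formal differentiation $\partial_z \hat H$ must be justified by replacing $\partial_z$ with finite-difference quotients $d_z^s \hat H$ in the weak formulation \eqref{10h}, deriving the estimates uniformly in $s$ by means of the $C^0$ time regularity \eqref{200} of Jin--Xiong, and passing to the limit $s\to 0$.
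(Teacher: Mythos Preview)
Your proposal is correct and follows essentially the same strategy as the paper: establish the $k=0$ maximal regularity estimate for the localized linear problem, then differentiate tangentially and control the resulting commutators via Lemma~\ref{L14} (tangential derivatives of $\hat V$ inherit the linear vanishing at the boundary), and iterate.  The paper differentiates all $k$ times at once and records the commutator as two weighted sums with coefficients $\hat x_n^{1-p}$ and $\hat x_n^{-p}$, whereas you peel off one $\partial_z$ at a time and apply the previous step to $\tilde H=\partial_z\hat H$; these are organizationally different but equivalent.

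Two small remarks.  First, your appeal to Lemma~\ref{L14b} in the base case is slightly informal: that lemma is stated for the $\hat x_n$-weighted model \eqref{10f}, not the $\hat V$-weighted equation \eqref{18}, so one really argues as in the derivation of Proposition~\ref{P5} (the paper simply asserts ``we can proceed as in the proof of Proposition~\ref{P5}'' to obtain \eqref{19}).  Second, invoking Jin--Xiong's time regularity \eqref{200} to justify the tangential difference quotients is not needed here, since Lemma~\ref{L10} treats a \emph{linear} equation with given forcing $\hat F,\hat G$; standard difference-quotient arguments suffice without a priori regularity input.  The appeal to \eqref{200} becomes relevant only later, in Proposition~\ref{P20}, where the nonlinearity re-enters.
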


\begin{proof} As $\hat G$ can be considered as an inhomogeneity, we can set $\hat G=0$ for notational convenience. 

We can proceed as in the proof of Proposition \ref{P5} and show that {\eqref{15b}} holds true on the half-space, that is, we have
\begin{equation}
\label{19}
 \|\partial_t \hat  H\|_{L^2(L^2_{2p})} + \|\hat  \grad \hat H\|_{L^2(L^2)} + \|\hat \grad^2 \hat H\|_{L^2(L^2_{2})}\lesssim  \|\hat F\|_{L^2(L^2_{2p})}.
\end{equation}
Note that the implicit constant in this estimate might be time-dependent and blow up for infinite times. Now, we differentiate \eqref{18} in time and tangential direction. For $m\in \N_0$ and $\alpha'\in \N_0^{n-1}$, it holds that
\begin{align*}
\MoveEqLeft
\partial_t \partial_t^m\partial_{\hat x'}^{\alpha'} \hat H  - \hat V^{-1-p}\hat \grad\cdot( V^2 A\hat \grad \partial_t^m\partial_{\hat x'}^{\alpha'} \hat H) \\
&  = (p-1)\partial_t^m\partial_{\hat x'}^{\alpha'} \hat H  +\partial_t^m\partial_{\hat x'}^{\alpha'} \hat F\\
&\qquad + \hat x_n^{1-p} \sum_{\substack{1\le |\beta|\le |\alpha'|+1 \\ \beta_n\le 2}} a_{\beta} \partial_t^m\partial_{\hat x}^{\beta} \hat H + \hat x_n^{-p} \sum_{\substack{1\le |\beta|\le |\alpha'|\\ \beta_n\le 1}} b_{\beta} \partial_t^m\partial_{\hat x}^{\beta}\hat  H ,
\end{align*}
for some {continuous and bounded functions $a_{\beta}, b_{\beta}$ on $\R$}.
 We can now apply the maximal regularity estimate \eqref{19} and find \eqref{20}
via iteration and thanks to the fact that $\hat x_n\lesssim 1$ in the support of $\hat H$. 
\end{proof}

Now, we translate the above estimate to the nonlinear setting. That is, we consider $f=f_0+f_1$, with 
\[
f_0  =   (1+h)^p - 1 - p h ,\quad f_1  = \left((1+ h)^{p-1}-1 \right) \partial_t h ,
\]
and we write $\hat F_i(t,\hat x) = \zeta(t) \hat \eta(\hat x) \hat f_i (\hat x)  = \zeta(t)\eta(x) f_i(x)$ for the transformed and truncated quantities. Moreover, we write $\hat h(t,\hat x) = h(t,x)$.  The nonlinearities are bounded as follows.

\begin{lemma}[{Spacetime localized boundary estimates for the nonlinearity}]
\label{L11}

Suppose that $\|\hat h\|_{L^{\infty}}\le\eps$ for some $\eps \ll 1$. Then, for any $k\in \N_{0}$ {there exists} a constant $\nu\in(0,1)$ such that
\begin{equation}\label{13b}
\begin{aligned}
\|D_z^k  \hat F_i\|_{L^2(L^2_{2p })} \lesssim \eps^{\nu} \sum_{m=0}^{k}  \|D_z^{k-m}(\zeta \hat \eta) D_z^{m+i} \hat h\|_{L^2(L^2_{2p})}  \qquad
{\forall i \in \{0,1\}}.
\end{aligned}
\end{equation}
\end{lemma}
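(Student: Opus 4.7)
The plan is to prove the bound by a Leibniz/chain-rule expansion of $D_z^k \hat F_i$, after which the interpolation Lemma \ref{L16} converts products of low-order $\hat h$-derivatives into one high-order $\hat h$-derivative multiplied by a positive power of $\|\hat h\|_\infty \le \epsilon$. The starting point is the Taylor factorization, valid on $\{|s|\le\epsilon\}$ for $\epsilon$ small:
\[
\hat f_0 = \hat h^2\, G_0(\hat h), \qquad \hat f_1 = \hat h\, G_1(\hat h)\, \partial_t \hat h,
\]
with $G_0, G_1$ smooth and uniformly bounded along with all their derivatives. The ``spare'' $\hat h$-factor(s) produced here are what ultimately yield the exponent $\nu > 0$.

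First I would apply Leibniz to $D_z^k(\zeta\hat\eta\, \hat f_i) = \sum_{j=0}^k \binom{k}{j} D_z^{k-j}(\zeta\hat\eta)\, D_z^j \hat f_i$ and then expand each $D_z^j \hat f_i$ by Fa\`a di Bruno. Because $z = (t, \hat x')$ does not involve the normal variable $\hat x_n$, the operators $D_z$ commute with the weight $\hat x_n^{2p}$; thus Lemma \ref{L16}, though stated for unweighted $L^q$, transfers verbatim to the weighted norm $L^2(L^2_{2p})$ upon choosing the test function $\psi = \zeta\hat\eta\, \hat x_n^p$ (so that $D_z^m \psi = D_z^m(\zeta\hat\eta)\, \hat x_n^p$ and the weight factors out).

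A generic summand in the expansion of $D_z^j \hat f_0$ is of the schematic form $G_0^{(s)}(\hat h) \prod_{\ell=1}^{r} D_z^{b_\ell}\hat h$ with $\sum b_\ell = j$ and $r \ge 2$, and $|G_0^{(s)}(\hat h)|\lesssim 1$ on $\{|\hat h|\le\epsilon\}$; for $\hat f_1$ the analogous summand carries one factor $D_z^{b_0}\partial_t\hat h$ together with $r \ge 1$ factors $D_z^{b_\ell}\hat h$ of total order $j$. Letting $\beta$ denote the largest derivative order in the product, I would place the factor carrying $\beta$ derivatives into $L^2(L^2_{2p})$; if some other $b_\ell = 0$ the corresponding factor is simply $\hat h$ and immediately contributes $\|\hat h\|_\infty \le \epsilon$, while if all $b_\ell \ge 1$ one groups factors of equal order and applies Lemma \ref{L16} (with exponents $\ell = \beta$ and $k = \sum b_\ell$, after a H\"older step to reduce a product of different-order derivatives to a power of $|D_z^\beta \hat h|$) to bound the low-order factors by $\|\hat h\|_\infty^\alpha$ times higher-order derivative norms, with some $\alpha>0$. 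The outcome is a control by $\|\hat h\|_\infty^{\nu_{\mathrm{term}}} \sum_{m=0}^k \|D_z^{k-m}(\zeta\hat\eta) D_z^{m+i}\hat h\|_{L^2(L^2_{2p})}$, with $\nu_{\mathrm{term}} > 0$ strictly positive.

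Taking $\nu$ as the minimum of the (finitely many) exponents $\nu_{\mathrm{term}}$ for each fixed $k$ and using $\|\hat h\|_\infty \le \epsilon \le 1$ gives the lemma. The main obstacle is purely the Fa\`a di Bruno bookkeeping, together with the verification that every summand retains at least one spare $\hat h$-factor (either bare, or produced by interpolation): this is guaranteed by the quadratic factorization of $\hat f_0$ and the bilinear factorization of $\hat f_1$ in Step 1. The case $i=1$ is strictly parallel to $i=0$; the only modification is that one factor in each product is a $D_z$-derivative of $\partial_t \hat h$ rather than of $\hat h$, which accounts exactly for the index shift $m \mapsto m+i$ on the right-hand side.
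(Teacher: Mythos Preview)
Your proposal is correct and follows essentially the same route as the paper: Leibniz expansion of $D_z^k(\zeta\hat\eta\,\hat f_i)$, reduction of the resulting products of $\hat h$-derivatives to pure powers, and application of the interpolation Lemma~\ref{L16} (your observation that the $\hat x_n^p$ weight commutes with $D_z$ and can be absorbed into $\psi$ is exactly what justifies using Lemma~\ref{L16} in the weighted norm). The only stylistic difference is that the paper bypasses Fa\`a di Bruno by using Young's inequality directly to obtain the pointwise bound $|\partial_z^{\beta} f_0| \lesssim |h||\partial_z^{\beta} h| + \sum_{1\le|\gamma|<|\beta|} |\partial_z^{\gamma} h|^{|\beta|/|\gamma|}$, which feeds straight into Lemma~\ref{L16}; your route via Fa\`a di Bruno followed by H\"older achieves the same outcome with a bit more bookkeeping.
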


\begin{proof}We drop the hats for notational convenience.
We start considering the estimate for $ F_0$, and notice that
\[
\partial_{z}^{\alpha} F_0  = \sum_{\beta\le \alpha} {\alpha\choose \beta}  \partial_{z}^{\alpha- \beta} \psi\,   \partial_{z}^{\beta} f_0,
\]
by the multi-dimensional Leibniz rule, where we have set $\psi = \zeta\eta$. We inspect the nonlinearity and find by Young's inequality and an iterative argument that
\[
| \partial^{\beta}_{z} f_0| \lesssim |h| | \partial_{z}^{\beta} h| + \sum_{\substack{\gamma\le \beta \\ 1\le |\gamma|\le |\beta|-1}} | \partial_{z}^{\gamma} h |^{\frac{|\beta|}{|\gamma|}},
\]
provided that $\eps$ is sufficiently small and $|\beta|\ge1$.
Therefore, summing over any multi-indices $\alpha$ with  $|\alpha|=k$ and integrating in space and time, we have that
\begin{align*}
 \| D_z^k F_0\|_{L^2(L^2_{2p })}
 &\lesssim \|h\|_{L^{\infty}} \|\psi D_z^k h\|_{L^2(L^2_{2p})}
+ \sum_{\ell=1}^{k-1}  \|\psi |D_z^{\ell} h |^{\frac{k}{\ell}}\|_{L^2(L^2_{2p})}
  \\&\qquad 
+ \sum_{{m=0}}^{k-1} \|h\|_{L^{\infty}} \|  D_z^{k-m} \psi  D_z^m h\|_{L^2(L^2_{2p})}
\\&\qquad 
+ \sum_{{m=2}}^{k-1}
\sum_{\ell=1}^{{m-1}} \|  D_z^{k-m}\psi | D_z^{\ell} h |^{\frac{m}{\ell}}\|_{L^2(L^2_{2p})}.
\end{align*} 
Let's discuss the right-hand side term by term. The first term is exactly of the kind we are looking for. For the second one, we apply Lemma \ref{L16} and find,
\[
\sum_{\ell=1}^{k-1} \|\psi |D_z^{\ell} h |^{\frac{k}{\ell}}\|_{L^2(L^2_{2p})} \lesssim \eps^{\nu}   \sum_{m=0}^{k-1} \|D_z^m\psi D_z^{k-m} h\|_{L^2(L^2_{2p})},
\]
for some $\nu>0$. The remaining terms are bounded by the same quantity.

The treatment of $f_1$ is similar. This time, derivatives of the nonlinearity are bounded as follows,
\begin{align*}
|D_z^{\ell} f_1| & \lesssim |h| |D_z^{\ell}\partial_t h| + \sum_{m=0}^{\ell-1} \sum_{n=1}^{\ell-m} |D_z^n h|^{\frac{\ell-m}{n}} |D_z^{m}\partial_t h|,
\end{align*}
as can be observed by Young's inequality and an iterative argument. With the help of the Leibniz rule, we thus obtain
\begin{align*}
\|D_z^k F_1\|_{L^2(L^2_{2p})} 
&\lesssim  \|h\|_{L^{\infty}} \|\psi D_z^k\partial_t h\|_{L^2(L^2_{2p})}
+\sum_{\ell=0}^{k-1} \|h\|_{L^{\infty}} \|D^{k-\ell}_z \psi D_z^{\ell} \partial_t h\|_{L^2(L^2_{2p})}
\\&\qquad + \sum_{m=0}^{k-1} \sum_{n=1}^{k-m} \|\psi |D_z^n h|^{\frac{k-m}{n}} D_z^{m} \partial_t h\|_{L^2(L^2_{2p})}
\\&\qquad +\sum_{\ell=0}^{k-1} \sum_{m=0}^{ \ell-1} \sum_{n=1}^{ \ell-m} \|D_z^{k-\ell}\psi |D^nh|^{\frac{\ell-m}{n}} D^m_z \partial_t h\|_{L^2(L^2_{2p})},
\end{align*} 
and absorbing $\partial_t$ into $D_z$ and an application of the H\"older inequality furthermore yields
\begin{align*}
\MoveEqLeft
\|D_z^k F_1\|_{L^2(L^2_{2p})}
\\ 
\lesssim& \|h\|_{L^{\infty}}\left( \|\psi D_z^{k+1}h\|_{L^2(L^2_{2p})}
+\sum_{\ell=1}^{k-1} 
 \|D^{k-\ell}_z \psi D_z^{\ell+1} h\|_{L^2(L^2_{2p}) }\right)
\\&
+ \sum_{m=0}^{k-1} \sum_{n=1}^{k-m} \|\psi |D_z^{n} h|^{\frac{k+1}{n}}\|_{L^2(L^2_{2p})}^{\frac{k-m}{k+1}}\|\psi |D_z^{m+1} h|^{\frac{k+1}{m+1}}\|_{L^2(L^2_{2p})}^{\frac{m+1}{k+1}}
\\&
+\sum_{\ell=0}^{k-1} \sum_{m=0}^{ \ell-1} \sum_{n=1}^{ \ell-m} \|D_z^{k-\ell}\psi |D_z^n h|^{\frac{\ell+1}n}\|_{L^2(L^2_{2p})}^{\frac{\ell-m}{\ell+1}} \|D_z^{k-\ell}\psi |D_z^{m+1}h|^{\frac{\ell+1}{m+1}}\|_{L^2(L^2_{2p})}^{\frac{m+1}{\ell+1}}.
\end{align*} 
We now invoke  Lemma \ref{L16} to the effect that
\[
\|D_z^{k} F_1\|_{L^2(L^2_{2p})} \lesssim \eps^{\nu} \sum_{m=0}^k \|D_z^m \psi D_z^{k+1-m} h\|_{L^2(L^2_{2p})},
\]
for some $\nu>0$. 
\end{proof}

With these preparations, we are in the position to extend the $L^2$ estimates from Lemmas \ref{L17}, \ref{L18}, and \ref{L19} to $z$-derivatives of any order {where $z=(t,\hat x')$ denotes the tangential and time variables and $\hat \nabla$ denotes the full spatial gradient (tangential and normal) in flattened coordinates}. For our purposes, it is enough to {bound the unweighted terms in these estimates.}

\begin{proposition}[Tangential nonlinear smoothing in Hilbert norms]
\label{P20}
Let $\hat H$ be the  solution to the transformed equation {\eqref{18}} and suppose that $\|\hat H\|_{L^{\infty}}\le \eps$ for some $\eps$ small enough. Let $0<\tau<1<T$ be given. Then, for any $k\in \N_0$, it holds that
\[
\|D_z^k \hat \grad\hat H\|_{L^2(L^2)} + \|D_z^k\hat \grad^2 \hat H\|_{L^2(L^2_2)}\lesssim \|h_0\|_{L^2_{p+1}}.
\]
\end{proposition}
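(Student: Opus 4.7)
The plan is to proceed by induction on $k$, with the base cases $k=0,1$ directly supplied by Lemmas \ref{L18}--\ref{L19} (in view of the bi-Lipschitz equivalence of the relevant weighted norms before and after the boundary-flattening diffeomorphism $\phi$, as observed in the proof of Proposition~\ref{P5}). To set up the induction I would introduce a nested family of cutoff pairs $(\zeta_j, \hat \eta_j)_{j \ge 0}$, decreasing in $j$, with $\zeta_{j-1} \hat \eta_{j-1} \equiv 1$ on $\mathrm{supp}(\zeta_j \hat \eta_j)$, and prove the estimate uniformly within this family: at level $k$ one bounds $\mathcal N_k := \|D_z^k \hat \grad \hat H\|_{L^2(L^2)} + \|D_z^k \hat \grad^2 \hat H\|_{L^2(L^2_2)}$ for $\hat H = \zeta_k \hat \eta_k \hat h$, given that $\mathcal N_0,\dots,\mathcal N_{k-1}$ are already controlled for the \emph{wider} cutoffs $(\zeta_j, \hat \eta_j)_{j \le k-1}$.

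For the inductive step, applying Lemma \ref{L10} to $\hat H = \zeta_k \hat \eta_k \hat h$ bounds $\mathcal N_k$ (together with the auxiliary norm $\|D_z^k \partial_t \hat H\|_{L^2(L^2_{2p})}$) by $\sum_{\ell=0}^k \|D_z^\ell \hat F\|_{L^2(L^2_{2p})} + \sum_{\ell=0}^k \|D_z^\ell \hat G\|_{L^2(L^2_{2p})}$. Estimating the nonlinearity by Lemma \ref{L11} isolates the single leading-order contribution $\e^\nu \|\zeta_k \hat \eta_k D_z^{k+1} \hat h\|_{L^2(L^2_{2p})}$ coming from $\hat F_1$ with $m = \ell = k$. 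After a Leibniz expansion this equals $\e^\nu \|D_z^{k+1} \hat H\|_{L^2(L^2_{2p})}$ modulo commutator terms with cutoff derivatives, and since $D_z^{k+1} \hat H$ is either $D_z^k \partial_t \hat H$ or $D_z^k \partial_{\hat x'_i} \hat H$ --- combined with $L^2 \hookrightarrow L^2_{2p}$ on the bounded support of $\hat \eta_k$ (the density $\hat x_n^{2p}$ being bounded there) --- this term is dominated by $\e^\nu$ times the left-hand side of the Lemma \ref{L10} estimate and absorbed after shrinking $\e$.

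Every remaining contribution --- the subleading pieces of $\hat F$, the Leibniz commutators just produced, and every term of $\hat G$ --- either carries a cutoff derivative (hence is supported where $\zeta_{k-1} \hat \eta_{k-1} \equiv 1$) or involves at most $k$ derivatives of $\hat h$ in the directions $z$. A further Leibniz manipulation rewrites each as a sum of quantities of the form $\|D_z^j(\zeta_{k-1}\hat \eta_{k-1} \hat h)\|_{L^2(L^2_{2p})}$ and $\|D_z^j \hat \grad(\zeta_{k-1}\hat \eta_{k-1} \hat h)\|_{L^2(L^2_{2p})}$ with $j \le k$, plus strictly lower-order commutators; these are bounded by $\|h_0\|_{L^2_{p+1}}$ via the inductive hypothesis at levels $\le k-1$, using once more $L^2 \hookrightarrow L^2_{2p}$ and Lemma~\ref{L13b} on the bounded support to pass between the relevant weights. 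Finally, patching this boundary estimate at finitely many chart points of $\partial\Omega$ together with the corresponding interior estimate --- obtained by iterating Proposition \ref{P5} in the interior of $\Omega$, where the coefficients are smooth and uniformly elliptic thanks to \eqref{24} --- through a partition of unity subordinate to the cover used in the proof of Proposition~\ref{P5} yields the global bound.

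The main obstacle is the top-order $\hat F_1$ contribution described above: it involves one derivative of $\hat h$ \emph{more} than the inductive hypothesis a priori controls, so the whole scheme relies on the twin mechanisms of (i) the $\e^\nu$ smallness from Lemma \ref{L11} and (ii) parabolicity from Lemma \ref{L10} placing the same quantity on both sides of a single estimate, which together permit absorption. The nested-cutoff bookkeeping is the price paid to make this absorption rigorous, since the $\hat G$-terms and the cutoff-commutator remainders can then always be reassigned to a strictly larger cutoff at a strictly lower derivative order.
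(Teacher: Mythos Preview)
Your proposal is correct and follows essentially the same approach as the paper's proof: both proceed by induction on $k$ with nested cutoffs, apply Lemma~\ref{L10} for the linear estimate, use Lemma~\ref{L11} to isolate the top-order nonlinear term $\eps^{\nu}\|D_z^{k+1}\hat H\|_{L^2(L^2_{2p})}$ for absorption, and control the lower-order $\hat F$-commutators and all of $\hat G$ via the inductive hypothesis on wider cutoffs.

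Two minor remarks. First, your final patching step is superfluous: Proposition~\ref{P20} is already a local statement about a single flattened chart $\hat H$; the partition-of-unity argument belongs to the proof of Proposition~\ref{P1}, not here. Second, where you invoke Lemma~\ref{L13b} to pass between weights, note that lemma is stated for $\Omega$ with $V$-weights and its proof uses the elliptic equation $\Delta V + V^p = 0$; the paper instead derives the needed half-space analogue~\eqref{36} directly by integration by parts in $\hat x_n$, which is the cleaner route in the $\hat x_n$-weighted setting.
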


\begin{proof}
We start by noting that the estimates from Lemmas \ref{L17}, \ref{L18} and \ref{L19} easily translate into the localized setting, so that
\begin{equation}
\label{34}
\begin{aligned}
\MoveEqLeft \|\hat H\|_{L^{\infty}((0,T); L^2_{p+1})} + \|\partial_t \hat H\|_{L^2((\tau,T);L^2_{2p})}+ \|\partial_t^2 \hat H\|_{L^2((\tau,T);L^2_{2p})}\\
\MoveEqLeft  \quad  + \|\hat \grad \hat H\|_{L^2((\tau,T);L^2)} + \|\hat \grad\partial_t \hat H\|_{L^2((\tau,T);L^2)} + \|\hat \grad^2 \hat H\|_{L^2((\tau,T);L^2_2)}\\
&\quad  + \|\hat \grad^2 \partial_t \hat H\|_{L^2((\tau,T);L^2_2)} 
\lesssim \|h_0\|_{L^2_{p+1}}.
\end{aligned}
\end{equation}

In order to derive estimates on derivatives of the next order, we invoke Lemma \ref{L10} with $k=1$,
\begin{equation}\label{35}
\begin{aligned}
\MoveEqLeft\|D_z\partial_t \hat H\|_{L^2(L^2_{2p})} + \|D_z\hat \grad \hat H\|_{L^2(L^2)} + \|D_z\hat \grad^2 \hat H\|_{L^2(L^2_2)} \\
& \lesssim \|\hat F\|_{L^2(L^2_{2p})} + \|D_z \hat F\|_{L^2(L^2_{2p})} + \|\hat G\|_{L^2(L^2_{2p})} + \|D_z\hat G\|_{L^2(L^2_{2p})}.
\end{aligned}
\end{equation}
Of course, our presentation here is a bit formal: Instead of considering derivatives $D_z$, we should more carefully apply difference quotients to the nonlinear equation. We have done so in Lemma \ref{L19} to deal with time derivatives. Because of the known reguarity  in time \eqref{200}, passing to the limit in the difference quotients don't cause any problems, also in the nonlinear terms. The strategy remains the same for higher order derivatives in time, and we may generously simplify our presentation here by considering proper time derivatives in the sequel.

When it comes to higher order derivatives in tangential direction, a result analogous to \eqref{200} is missing, but will be derived by us in Corollary \ref{C12} below. We should thus be a bit more careful in our argumentation. For the sake of a simpler presentation though, we shall keep the notation $D_z$ and will tacitly interpret it as  difference quotients in the tangential variables. Only in the discussion of the leading order nonlinear terms, we shall recall its actual meaning.  For all other terms we will be rather formal.

Let us start considering the lower-order terms. From the definition of $\hat G$, we deduce
\begin{align*}
 \|\hat G\|_{L^2(L^2_{2p})} + \|D_z \hat G\|_{L^2(L^2_{2p})} 
&\lesssim \|\chi_{\spt \psi} \hat  h\|_{L^2(L^2)} + \|\chi_{\spt \psi} \hat \grad h\|_{L^2(L^2_2)}\\&\quad + \|\chi_{\spt \psi}D_z \hat  h\|_{L^2(L^2)} + \|\chi_{\spt \psi} D_z \hat \grad \hat h\|_{L^2(L^2_2)},
\end{align*}
where we have set $\psi = \zeta \hat \eta$.

We will now apply an interpolation to {modify the weights} on the right-hand side. Let $\phi$ be a cut-off function that is $1$ in the support of $\psi$ and vanishes outside of a small neighborhood $\widehat{\spt \psi}$ of this support. Then it holds for any regular function $\xi$ that
\begin{align*}
\int \phi^2 \xi^2 \, d\hat x & = \int {\left(\frac{d\hat x_n}{d\hat x_n}\right)} \phi^2 \xi^2\, d\hat x\\
& = -2 \int \hat x_n \phi \xi \partial_n \xi\, d\hat x  - 2\int \hat x_n \phi \partial_n \phi \xi^2\, d\hat x.
\end{align*}
An application of the Cauchy--Schwarz inequalities then yields 
\[
\| \chi_{\spt \psi }\xi\|_{L^2(L^2)} \lesssim \|\chi_{\widehat{\spt \psi}} \xi\|_{L^2(L^2_2)} + \|\chi_{\widehat{\spt \psi}} \hat \grad \xi\|_{L^2(L^2_2)}.
\]
This argument can be repeated by writing $\hat x_n^q = \frac1{q+1} \frac{d}{d\hat x_n}\hat x_n^{q+1}$ and using that $\hat \mu_q\lesssim \hat \mu_2$ for $q\ge 2$, to derive
\begin{equation}\label{36}
\| \chi_{\spt \psi }\xi\|_{L^2(L^2)} \lesssim \|\chi_{\widehat{\spt \psi}} \xi\|_{L^2(L^2_q)} + \|\chi_{\widehat{\spt \psi}} \hat \grad \xi\|_{L^2(L^2_2)},
\end{equation} 
for any $q\in 2\N$. By interpolation, this estimates extends to  any $q>0$.

Making use of this interpolation-type estimate with suitable choices of $q$ in the above estimate for $\hat G$ and estimating $\hat \mu_{2p}\lesssim \hat\mu_{p+1}\lesssim \hat\mu_{2} \lesssim 1$ yields
\begin{align*}
 \|\hat G\|_{L^2(L^2_{2p})} + \|D_z \hat G\|_{L^2(L^2_{2p})} 
&\lesssim \|\chi_{\widehat{\spt \psi}} \hat  h\|_{L^2(L^2_{p+1})} + \|\chi_{\widehat{\spt \psi}} \hat \grad h\|_{L^2(L^2)}\\&\quad + \|\chi_{\widehat{\spt \psi}}\partial_t \hat  h\|_{L^2(L^2_{2p})} + \|\chi_{\widehat{\spt \psi}} \partial_t \hat \grad \hat h\|_{L^2(L^2)}\\
&\quad+ \|\chi_{\widehat{\spt \psi}} \hat \grad^2 \hat h\|_{L^2(L^2_2)},
\end{align*}
We may now invoke \eqref{34} with suitable choices of $\tau$, $T$ and {spatial cutoff} $r$ to deduce that 
\[
 \|\hat G\|_{L^2(L^2_{2p})} + \|D_z \hat G\|_{L^2(L^2_{2p})}  \lesssim \|h_0\|_{L^2_{p+1}}.
 \]
 
It remains to estimate the nonlinear terms in \eqref{35}. We promised to be more careful when considering higher order tangential difference quotients and we should thus briefly discuss the rigorous treatment of the leading order nonlinear terms. Because tangential derivatives leave the limit function $\hat V$ invariant in terms of scaling, cf.~Lemma \ref{L14}, these are terms of the order $ \psi |d_i^s \hat h||\partial_t \hat h|$ and $ \psi |\hat h||\partial_td_i^s \hat h|$, where $d_i^s$ is the difference quotient operator in direction $\hat x_i$, see also Lemma \ref{L19}. By using the smallness of $\hat H$ in the assumption, the second of these terms can be absorbed into the left-hand side before passing to the limit $s\to 0$. The other term can be split into the two quadratic terms $\psi (d_i^s \hat h)^2$ and $\psi (\partial_t \hat h)^2$, among which we only have to consider the first one, because time regularity is already settled. Here, we notice that this term is bounded uniformly in $s$, because $\|\psi (\partial_i \hat h)^2\|_{L^2_{2p}}$ can be estimated by $\|\hat h\|_{L^{\infty}} \|\psi \partial_i^2 \hat h\|_{L^2_{2}}$ plus lower order terms via the interpolation Lemma \ref{L16} and by using $\hat \mu_{2p}\lesssim \hat \mu_2$ in the support of $\hat \eta$.  This term is controlled via \eqref{34}.
There are no further regularity issues popping up when considering higher order tangential derivatives. We shall continue with the rather formal discussion and summarize here that
Lemma \ref{L11} implies
\begin{align*}
\MoveEqLeft \|\hat F\|_{L^2(L^2_{2p})} + \|D_z\hat F\|_{L^2(L^2_{2p})} \\
& \lesssim \eps^{\nu}\left( \|\psi \hat h\|_{L^2(L^2_{2p})} + {\|\psi D_z \hat h\|_{L^2(L^2_{2p})}} + {\|\psi D^2_z \hat h\|_{L^2(L^2_{2p})}} \right.\\
&\qquad  +\left. \|D_z \psi \hat h\|_{L^2(L^2_{2p})} 
+ \|D_z \psi {D_z}  \hat h\|_{L^2(L^2_{2p})}\right)
\\&\lesssim \|\chi_{\spt\psi} \hat h\|_{L^2(L^2_{2p})} +\|\chi_{\spt\psi} D_z \hat h\|_{L^2(L^2_{2p})} +\eps^{\nu} \| D^{2}_z  \hat H\|_{L^2(L^2_{2p})}
\\&\lesssim \|h_0\|_{L^2_{p+1}} + \eps^{\nu} \|D_z \p_t \hat H\|_{L^2(L^2_{2p})} + {\eps^\nu \|D_z \hat \nabla \hat H\|_{L^2(L^2)}},
\end{align*} where the last inequality is due to \eqref{34}.

Plugging these estimates {(for the lower-order terms involving $\hat G$ and the nonlinearities involving $\hat F$)} into \eqref{35} thus yields
\[
\|D_z\partial_t \hat H\|_{L^2(L^2_{2p})} + \|D_z\hat \grad \hat H\|_{L^2(L^2)} + \|D_z\hat \grad^2 \hat H\|_{L^2(L^2_2)} \lesssim \|h_0\|_{L^2_{p+1}},
\]
provided that $\eps^\nu$ is chosen small enough {that the final terms above which it multiplies can be absorbed into the left hand side}.

This procedure can be iterated, with a suitable adaption of {$\tau, T$ and the radii $r$} of the spatial cut-off functions $\eta$ in each step 
{to prove}\[
\|D_z^k\partial_t \hat H\|_{L^2(L^2_{2p})} + \|D_z^k\hat \grad \hat H\|_{L^2(L^2)} + \|D_z^k\hat \grad^2 \hat H\|_{L^2(L^2_2)} \lesssim \|h_0\|_{L^2_{p+1}},
\]
{inductively. This implies}  the desired bounds.
\end{proof}

Finally, we use generalized Sobolev embeddings to pass from $L^2$ to $L^\infty$ estimates: 
\begin{corollary}[Tangential nonlinear smoothing in {weighted} uniform norms]
\label{C12}
Let $\hat H$ be the  solution to the transformed equation {\eqref{18}} 
and suppose that $\|\hat H\|_{L^{\infty}}\le \eps$ for some $\eps$ small enough. Let $0<\tau<1<T$ be given. Then, for any $k\in \N_0$, it holds that
\[
\|D_z^k  \hat H\|_{L^{\infty}(L^{\infty})} + \|\hat x_n D_z^k \hat \grad \hat H\|_{L^{\infty}(L^{\infty})}\lesssim \|h_0\|_{L^2_{p+1}}.
\]
\end{corollary}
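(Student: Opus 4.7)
The plan is to pass from the Hilbert-space bounds of Proposition~\ref{P20} to the uniform bounds of the corollary by combining Sobolev embeddings in two groups of variables: the $n$-dimensional tangential/temporal variable $z$, in which we control arbitrarily many derivatives, and the one-dimensional transversal direction $\hat x_n$, in which only two weighted derivatives are available. Below, $L^2_{\hat x_n}$ denotes the unweighted $L^2$ norm in the single variable $\hat x_n$.

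First I would upgrade Proposition~\ref{P20} to include $\hat H$ itself, without a derivative. Because $\hat H=\zeta\hat\eta\hat h$ is compactly supported in the tangential spatial variables and $D_z$ commutes with $\hat\grad$, Poincar\'e on the bounded support of $\hat\eta$ yields
\[
\|D_z^j \hat H\|_{L^2(L^2)} \lesssim \|D_z^j \hat\grad \hat H\|_{L^2(L^2)} \lesssim \|h_0\|_{L^2_{p+1}}
\]
for every $j\in\N_0$.

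Next I would apply the elementary one-dimensional inequality $\|f\|_{L^\infty}^2 \le 2\|f\|_{L^2}\|f'\|_{L^2}$, valid for $f\in H^1(\R)$ of compact support, in the transversal direction. Taking $f(\hat x_n)=D_z^k\hat H(z,\hat x_n)$ gives
\[
\sup_{\hat x_n}|D_z^k \hat H(z,\hat x_n)|^2 \lesssim \|D_z^k \hat H(z,\cdot)\|_{L^2_{\hat x_n}}\,\|D_z^k \hat\grad \hat H(z,\cdot)\|_{L^2_{\hat x_n}},
\]
whereas taking instead $f(\hat x_n)=\hat x_n D_z^k \hat\grad \hat H(z,\hat x_n)$ and using the product rule $\partial_n(\hat x_n u)=u+\hat x_n\partial_n u$ produces
\[
\sup_{\hat x_n}|\hat x_n D_z^k \hat\grad \hat H|^2 \lesssim \|\hat x_n D_z^k \hat\grad \hat H(z,\cdot)\|_{L^2_{\hat x_n}}\Bigl(\|D_z^k \hat\grad \hat H(z,\cdot)\|_{L^2_{\hat x_n}}+\|\hat x_n D_z^k \hat\grad^2 \hat H(z,\cdot)\|_{L^2_{\hat x_n}}\Bigr).
\]
The weighted factors $\|\hat x_n\,\cdot\|_{L^2_{\hat x_n}}^2=\int\hat x_n^2|\cdot|^2\,d\hat x_n$ match precisely the $L^2_2$-weight on $\hat\grad^2\hat H$ in Proposition~\ref{P20}.

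Finally, I would convert these pointwise-in-$z$ bounds into sup-norms over $z$ by invoking the Sobolev embedding $H^N_z\hookrightarrow L^\infty_z$ on the $n$-dimensional $z$-domain, valid for any $N>n/2$. Applied to the $z$-dependent functions $g(z):=\|D_z^k\hat\grad^{(j)}\hat H(z,\cdot)\|_{L^2_{\hat x_n}}$ (with and without the extra prefactor $\hat x_n$) and commuting $\partial_z^\alpha$ under the $\hat x_n$-integral, this reduces matters to estimating $\|D_z^{k+|\alpha|}\hat\grad^{(j)}\hat H\|_{L^2(L^2_q)}$ for $|\alpha|\le N$, $j\in\{0,1,2\}$, and $q\in\{0,2\}$; all such norms are bounded by $\|h_0\|_{L^2_{p+1}}$ by Proposition~\ref{P20} applied with $k$ replaced by $k+|\alpha|$, together with Step~1. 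The main obstacle lies in the strongly asymmetric regularity of $\hat H$: infinitely many derivatives in $z$ are available, but only two weighted derivatives in $\hat x_n$. The weight $\hat x_n$ attached to $\hat\grad\hat H$ in the statement of the corollary is forced precisely by the $L^2_2$-weight on the Hessian in Proposition~\ref{P20}, and the algebraic identity $\partial_n(\hat x_n u)=u+\hat x_n\partial_n u$ is what aligns these weights and allows the one-dimensional Sobolev estimate to close.
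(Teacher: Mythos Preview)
Your argument is correct and matches the paper's approach---combining Sobolev embedding in the tangential/temporal $z$-variables (where arbitrarily many derivatives are available from Proposition~\ref{P20}) with a one-dimensional estimate in $\hat x_n$---though the paper controls the zero-order term $\|D_z^j\hat H\|_{L^2(L^2)}$ via a Hardy inequality in $\hat x_n$ (using the vanishing at the outer endpoint of the cutoff) rather than your Poincar\'e in the tangential spatial variables. One small imprecision worth noting: your phrase ``commuting $\partial_z^\alpha$ under the $\hat x_n$-integral'' should really be read as the vector-valued embedding $H^N_z(L^2_{\hat x_n})\hookrightarrow L^\infty_z(L^2_{\hat x_n})$, since derivatives of the scalar function $g(z)=\|F(z,\cdot)\|_{L^2_{\hat x_n}}$ do not literally equal $\|\partial_z^\alpha F(z,\cdot)\|_{L^2_{\hat x_n}}$ beyond first order.
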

\begin{proof}The estimate basically follows from Proposition \ref{P20} via {generalized Sobolev embeddings using compact support in the $z=(t,x')\in \R^n$ variables,
followed by (two) integrations in $x_n$ where we have a vanishing boundary condition at one end $x_n=r$ only.} Indeed, for $m\in \N$ with $m>{n/2}$, it holds that
\begin{align*}
\|D_z^k\hat H\|_{L^{\infty}(L^{\infty})}
& \lesssim \sum_{\ell=0}^m \|D_z^{k+\ell}\hat H\|_{L^{2}(L^{2})} +\sum_{\ell=0}^m \|D_z^{k+\ell}\partial_{\hat x_n}\hat H\|_{L^{2}(L^{2})}.
\end{align*}
We now use the Hardy inequality
\[
\|\xi\|_{L^2(L^2)} \lesssim \|\partial_{\hat x_n}\xi\|_{L^2(L^2_2)},
\]
whose proof is similar to that of \eqref{36}, to eliminate the zero-order terms on the right-hand side, thus,
\[
\|D_z^k\hat H\|_{L^{\infty}(L^{\infty})} \lesssim \sum_{\ell=0}^{m+1} \|D_z^{k+\ell}\hat\grad\hat H\|_{L^{2}(L^{2})}.
\]
{Finally, we} apply Proposition \ref{P20} to infer the desired control of the first term in the  statement of the lemma. The  second term is bounded analogously, {by applying the same
argument to $\hat x_n D^k \hat \nabla \hat H$ in place of $D^k \hat H$}.
\end{proof}

We are now well-prepared to prove Proposition \ref{P1}.

\begin{proof}[Proofs of Proposition \ref{P1}]
As a consequence of Corollary \ref{C12} and the construction of $\hat H$, we find for any $x_0\in \partial \Omega$, $0<\tau<1<T$ and any $r>0$ small enough that
\[
\|\partial_t^k  h\|_{L^{\infty}((\tau,T)\times B_r(x_0))} \lesssim \|h_0\|_{L^2_{p+1}}.
\]
In particular, covering a small band along the domain boundary with a finite number of balls, the latter extends to the band $\Omega_r = \{x\in\Omega:\dist(x,\partial \Omega)\le r\}$ for some small $r>0$,
\[
\|\partial_t^k  h\|_{L^{\infty}((\tau,T)\times \Omega_r)} \lesssim \|h_0\|_{L^2_{p+1}}.
\]
As mentioned earlier, similar (but simpler, thanks to the strict parabolicity) arguments in the interior of the domain $\Omega$ yield analogous estimates on $\Omega\setminus \bar \Omega_r$. Both together prove the statement of the proposition.
\end{proof}

\section{Proof of {second dichotomy}} 
\label{S:second dichotomy}

In this final section, we turn to the proof of Theorem \ref{T3},  which states optimal exponential convergence of the relative error under the assumption that $V$ is an {ordinary limit} in the sense of Definition \ref{D3}. Thanks to our {first dichotomy result  --- Theorem \ref{thm-dichotomy} --- and Proposition \ref{P1}, it is {enough} to establish convergence at some} exponential rate, which is the main result of the present section.

\begin{theorem}[{Ordinary limits are approached exponentially fast}]\label{T4}
{Under the hypotheses of Theorem \ref{thm-dichotomy},}
 if $V$ is an ordinary limit of the dynamics \eqref{eq-U}, the convergence takes place exponentially fast, i.e., there exists a rate $\gamma>0$ such that
\bea \label{eq-thmapp}\Vert h(t)\Vert _{L^2_{p+1}} =O(e^{-\gamma t}) {\quad \mbox{\rm as} \quad t \to \infty.}
\eea 
\end{theorem}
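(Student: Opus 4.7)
The plan is a modulation argument that uses the ordinary-limit hypothesis to subtract from $h(t)$ a time-dependent nearby stationary solution, thereby reducing to the Merle--Zaag argument in its degenerate branch where the central component vanishes identically. Since $\|h(t)\|_{L^\infty}\to 0$ by \eqref{eq-asymptoticrelative}, for $t\gg 1$ I would apply the implicit function theorem to the $C^1$ map $\psi\mapsto P_c(h(t)-\Phi_V(\psi))$ --- whose partial derivative at $\psi=0$ equals $-\mathrm{id}|_{\ker L_V}$ because $(d\Phi_V)_0=\mathrm{id}$ --- to produce a unique small $\psi(t)\in\U\subset \ker L_V$ such that the deviation
\begin{equation*}
g(t):=h(t)-\Phi_V(\psi(t)) \quad \text{satisfies} \quad P_c g(t)\equiv 0.
\end{equation*}
Proposition~\ref{P1} together with the $C^1$ character of $\Phi_V$ would provide enough time-regularity of $\psi(\cdot)$ to legitimize the subsequent ODE argument.

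Writing $\tilde h(t):=\Phi_V(\psi(t))$ and using that each $\tilde h(t)$ is a stationary relative error, i.e.~$L_V\tilde h=M(\tilde h)$ by \eqref{36a}, I would combine with \eqref{eq-relativeerror} to get
\begin{equation*}
\partial_t g + L_V g = \bigl[N(h) - M(\tilde h)\bigr] - (d\Phi_V)_{\psi(t)}[\dot\psi(t)].
\end{equation*}
Taylor-expanding $(1+\cdot)^p$ and $(1+\cdot)^{p-1}$ at $0$ and invoking Proposition~\ref{P1} to bound $\partial_t h$ pointwise yields $|N(h)-M(\tilde h)|\lesssim (\|h\|_{L^\infty}+\|\tilde h\|_{L^\infty})|g|+\|h\|_{L^\infty}(|\partial_t g|+|\dot\psi|)$, while $(d\Phi_V)_\psi - \mathrm{id} = o(1)$ as $\psi\to 0$. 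Projecting the PDE onto $\ker L_V$ then kills $L_V g$ and, thanks to $P_c g\equiv 0$, also $\partial_t g_c$; inverting the near-identity operator $\xi\mapsto P_c(d\Phi_V)_\psi[\xi]$ on $\ker L_V$ gives
\begin{equation*}
\|\dot\psi(t)\|_{L^2_{p+1}} \;\lesssim\; \|h(t)\|_{L^\infty}\,\|g(t)\|_{L^2_{p+1}}.
\end{equation*}

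Projecting the equation for $g$ onto $E_u$ and $E_s$ and testing in $L^2_{p+1}$ would produce
\begin{equation*}
\tfrac{d}{dt}\|g_u\| - |\lambda_{-1}|\|g_u\|\ge -\hat\eps\|g\|,\qquad \tfrac{d}{dt}\|g_s\| + \lambda_K\|g_s\|\le \hat\eps\|g\|,
\end{equation*}
with $\hat\eps\to 0$ as $t\to\infty$. Lemma~\ref{lem-MZODE3} applied with $X=\|g_u\|$, $Y\equiv 0$, $Z=\|g_s\|$ would force $X\le 100\hat\eps Z$, and a Gronwall argument on the $Z$-inequality then yields $\|g(t)\|_{L^2_{p+1}}\lesssim e^{-\gamma_0 t}$ for some $\gamma_0>0$. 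Since $\dot\psi$ is then integrable with exponential tail, $\psi(t)$ converges to some $\psi_\infty$; and because $h(t)\to 0$ while $\Phi_V$ is a local diffeomorphism with $\Phi_V(0)=0$, necessarily $\psi_\infty=0$ and $\|\psi(t)\|_{L^2_{p+1}}\lesssim e^{-\gamma_0 t}$. Therefore $\|h(t)\|_{L^2_{p+1}}\le \|g(t)\|_{L^2_{p+1}} + \|\Phi_V(\psi(t))\|_{L^2_{p+1}}\lesssim e^{-\gamma t}$ for some $\gamma>0$, which is \eqref{eq-thmapp}.

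The main obstacle I anticipate is the rigorous handling of the modulation in the correct function-space setting: one must verify that $t\mapsto \psi(t)$ is differentiable with the asserted bound on $\dot\psi$, using only the $C^1$-regularity of $\Phi_V$ postulated in Definition~\ref{D3}, and that the near-identity operator $\xi\mapsto P_c(d\Phi_V)_\psi[\xi]$ is genuinely invertible on $\ker L_V$ uniformly in $t$. A secondary technical point is that the nonlinearity $N(h)$ involves $\partial_t h$; controlling this in $L^2_{p+1}$ is precisely the role of the tangential smoothing estimates developed in Section~\ref{S4} (accessed via Proposition~\ref{P1}), so no new analytic input is needed beyond what is already established. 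With these in place, the argument is a conceptually clean combination of modulation with the Merle--Zaag dichotomy in its degenerate $Y\equiv 0$ branch.
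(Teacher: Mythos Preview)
Your continuous modulation strategy is a natural alternative to the paper's discrete iteration, but there is a genuine gap at the step where you claim the forcing in the $g_u$ and $g_s$ equations is bounded by $\hat\eps\|g\|_{L^2_{p+1}}$, equivalently that $\|\dot\psi\|\lesssim\|h\|_{L^\infty}\|g\|$. The nonlinearity $N(h)$ contains the term $(1-(1+h)^{p-1})\partial_t h$, and Proposition~\ref{P1} only yields $\|\partial_t h\|_{L^\infty}\lesssim\|h(t-1)\|_{L^2_{p+1}}$. Since $h=g+\Phi_V(\psi)$, this produces a contribution $\hat\eps\,\|\psi(t-1)\|$ on the right-hand side that you cannot absorb into $\hat\eps\|g\|$. (The same obstruction appears if you switch to the spatial form $M_V(h)$ of the nonlinearity: one then needs $\|L_Vg\|_{L^2_{p+1}}\lesssim\|g\|_{L^2_{p+1}}$, which is not available.) Consequently $\|\psi\|$ re-enters the differential inequalities as a neutral mode, the hypothesis $Y\equiv 0$ in Lemma~\ref{lem-MZODE3} is not met, and you are thrown back on the very dichotomy you are trying to resolve. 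Modulating relative to the fixed $V$ and subtracting a stationary error does not, by itself, exploit the ordinary-limit structure enough to kill the neutral direction.

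The paper proceeds differently: rather than a continuous modulation relative to $V$, it \emph{changes the reference stationary solution} to a nearby $\tilde V=V(1+g_0)\in S$ and works with the relative error $\tilde h=(h-g_0)/(1+g_0)$ and the operator $L_{\tilde V}$ (Proposition~\ref{P24-2}). Ordinariness is shown to be relatively-uniformly open (Proposition~\ref{L23}, via Lemmas~\ref{L21}--\ref{L23a}), so at $\tilde V$ one can again use $\Phi_{\tilde V}$ to manufacture a new stationary error absorbing the $L_{\tilde V}$-center component of $\tilde h$ at a fixed time. The Choi--Sun compact-interval refinement (Lemma~\ref{lem-choisun}) then gives a half-improvement of the distance to the (updated) stationary family after a fixed delay $\tau$; iterating yields decay $2^{-k}$ after $k\tau$ units of time. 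The key point is that at each step the center modes are measured with respect to the \emph{current} linearization $L_{\tilde V}$ and are therefore small by construction, which is precisely the leverage your fixed-$V$ modulation lacks.
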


The proof of exponential convergence relies  {on Choi and Sun's refinement --- Lemma \ref{lem-choisun} --- of the Merle-Zaag} dynamical systems result recalled above. It roughly says that if a solution is known to be small on a large time interval, then, up to a possible error caused by the neutral modes,  the stable and unstable modes should be much smaller than an exponentially decaying term in the middle of this time interval. (Note  the unstable modes tend to decay like the stable ones if time goes backward and this is why we need to go to the middle of time interval.) The main issue  to take care of  is thus the control of the neutral modes.

The underlying idea for controlling the neutral modes is  reducing the amplitude of the neutral modes by changing the reference stationary solution in the direction of the neutral modes. This can be effectively done if the {limit $V$ is ordinary}. 
This strategy goes back to the work of Allard and Almgren \cite{AA}, {who gave kernel integrability conditions guaranteeing that minimal surfaces converge to their tangent cones sequentially and exponenentially fast.}
See also Section 6 of {Simon \cite{MR821971}, or the recent contributions of Choi, Choi, Kim and Sun in various combinations \cite{ChoiSun20+} \cite{CCK}.}

In order to pursue this strategy, we have to prove that {being an ordinary limit is an open property among stationary solutions S}. This crucial insight requires some technical preparations.

\begin{lemma}[{Lower semicontinuity of kernel dimension at an ordinary limit}]
\label{L21}
Let {$V\in S$ be an ordinary limit} and $\delta>0$ as in Definition \ref{D3}. Let {$\tilde V \in S$ be sufficiently close that $h = {V}/{\tilde V}-1$ satisfies}
$\|h\|_{L^\infty} \le \delta$, so that     $h=\Phi_V(\psi)$ for some $\psi \in \ker L_V$. Then it holds that
\begin{equation}
\label{36b}
\frac{V}{\tilde V}(d\Phi_V)_\psi(\ker L_V ) \subset \ker L_{\tilde V}.
\end{equation}
\end{lemma}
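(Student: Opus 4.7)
The plan is to build a one-parameter family of stationary solutions through $\tilde V$, re-express it as a relative error about the new base $\tilde V$, and then linearize the stationary relative-error equation to locate the tangent vector inside $\ker L_{\tilde V}$.

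Concretely, given $\eta \in \ker L_V$, the $L^2_{p+1}$-openness of $\U$ secures $\psi + s\eta \in \U$ for $|s|<s_0$ with some $s_0>0$, and I set
\[
\tilde V_s := V\bigl(1 + \Phi_V(\psi + s\eta)\bigr).
\]
Reversing the elementary computation behind \eqref{36a}, each $\tilde V_s$ solves \eqref{eq-V} weakly, so $\tilde V_s \in S$ with $\tilde V_0 = \tilde V$. Re-centering on $\tilde V$, the curve
\[
\tilde h(s) := \frac{\tilde V_s}{\tilde V} - 1
\]
satisfies the stationary relative-error equation $L_{\tilde V}\tilde h(s) = M(\tilde h(s))$ weakly for every $s$, with $\tilde h(0) = 0$, and the chain rule combined with the $C^1$-regularity of $\Phi_V$ into $L^2_{p+1}$ given by Definition \ref{D3} yields
\[
\partial_s \tilde h(s)\big|_{s=0} = \frac{V}{\tilde V}(d\Phi_V)_\psi \eta.
\]
Differentiating $L_{\tilde V}\tilde h(s) = M(\tilde h(s))$ at $s=0$ and invoking $\tilde h(0) = 0$ together with $M'(0)=0$ (immediate from $M(h) = (1+h)^p - 1 - ph$) eliminates the right-hand side, producing $L_{\tilde V}(\partial_s\tilde h(0)) = 0$, which is the desired containment.

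The main obstacle will be the justification of the final differentiation: passing $\partial_s$ through the weak form of a singular elliptic equation. What is needed is enough regularity of $s \mapsto \tilde h(s)$ to exchange $\partial_s$ with the integrals in the analogue of \eqref{weak elliptic} for $L_{\tilde V}\tilde h(s) = M(\tilde h(s))$. The $C^1$-regularity provided by Definition \ref{D3} is only into $L^2_{p+1}$, yet the weak form of $L_{\tilde V}$ pairs the solution against $\nabla \varphi$ in the weight $\tilde V^2$, so one must argue that $\partial_s \tilde h(0)$ inherits membership in $\dot H^1_2$. This can be obtained either by upgrading the image topology of $\Phi_V$ using elliptic regularity for the stationary relative-error equation inside the image, or by a difference-quotient argument in the weak form combined with coercivity of $L_{\tilde V}$ modulo its kernel. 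Once this is secured, Lemma \ref{L13} and the relative-uniform closeness of $V$ and $\tilde V$ render $V/\tilde V$ bounded on $\Omega$ and make the weights $\tilde V^2$ and $V^2$ (resp.\ $\tilde V^{p+1}$ and $V^{p+1}$) mutually comparable, reducing the remaining limit exchange to dominated convergence against a fixed $\varphi \in C^1(\bar\Omega)$.
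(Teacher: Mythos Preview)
Your proposal is correct and follows essentially the same route as the paper: build the curve $\tilde V_s = V(1+\Phi_V(\psi+s\eta))$, re-center as $\tilde h(s) = \tilde V_s/\tilde V - 1 = \frac{V}{\tilde V}(\Phi_V(\psi+s\eta)-\Phi_V(\psi))$, and differentiate $L_{\tilde V}\tilde h(s)=M(\tilde h(s))$ at $s=0$ using $M'(0)=0$. The paper's proof is in fact more terse than yours on the regularity issue you flag in your final paragraph; it simply invokes ``the regularity properties of the diffeomorphism $\Phi_V$'' and differentiates, without spelling out how $\partial_s\tilde h(0)$ lands in $\dot H^1_2$ or how the derivative passes through the weak formulation, so your caution there is well placed but does not represent a divergence from the paper's argument.
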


\begin{proof}
{Let $0 \in \U \subset \ker L_V$ the neighbourhood provided by Definition \ref{D3}.}
We fix another element in the kernel, $\zeta \in \ker L_V$, and choose $s_0$ small enough such that  $\psi + s\zeta \in \U$ for any $s\in(-s_0,s_0)$. Then $h_s = \Phi_V(\psi+s\zeta)$ defines a family of stationary {relative errors} \eqref{36a} with $h_0 = h$, or equivalently, $\tilde V_s = V(h_s+1) \in S$ defines a stationary solution in terms of the original variables with $\tilde V_0=\tilde V$. Changing the reference stationary solution 
$\tilde h_s = \tilde V_s/\tilde V-1$ solves $\tilde h_0=0$ and
\[
L_{\tilde V}\tilde h_s = M(\tilde h_s).
\]
We may now rewrite
\[
\tilde h_s = \frac{V}{\tilde V}\left(\Phi_V(\psi+s\zeta) - \Phi_V(\psi)\right),
\]
and thanks to the regularity properties of the diffeomorphism $\Phi_V$, differentiation in the previous two identities yields
\begin{align*}
L_{\tilde V}  \left(\frac{V}{\tilde V}( d\Phi_V)_{\psi}\zeta\right)  = L_{\tilde V} \left. \partial_s \right|_{s=0} \tilde h_s = M'(0)  \left. \partial_s \right|_{s=0} \tilde h_s=0,
\end{align*}
{since $M(h)=O(h^2)$ as $h\to 0$} in \eqref{36a}. The latter verifies the inclusion \eqref{36b}.
\end{proof}

The next lemma guarantees that spectral gaps are preserved by nearby stationary solutions.

\begin{lemma}
[Continuity of spectral gap and nullity]\label{lem-continuity} 
Let {$V \in S$ be an ordinary limit and let the sequence $\{V_{\ell}\}_{\ell\in\N} \in S$ of stationary solutions
converge to $V$ relatively-uniformly, meaning $h_{\ell} = V_{\ell}/V-1$ satisfies}
\[
\|h_{\ell}\|_{L^{\infty}}\to 0\quad\mbox{as }\ell\to \infty.
\]
Let $\{\phi_{\ell}\}_{\ell\in\N}$ denote a sequence of normalized eigenfunctions, i.e., 
\[
L_{V_{\ell}}\phi_{\ell} = \lambda_{\ell}\phi_{\ell},\quad \mbox{and}\quad \int \phi_{\ell}^2 V_{\ell}^{p+1}\, dx  = 1,
\]
for some $\lambda_{\ell}\in\R$. Suppose that the sequence of eigenvalues is bounded, $|\lambda_{\ell}|\le \Lambda$ for some $\Lambda>0$. Then there exists a subsequence $\{\phi_{\ell_k}\}_{k\in\N}$ and a function $\phi\in L^2(V^{p+1}dx)$ such that
\[
\|\phi_{\ell_k}-\phi\|_{L^2(V^{p+1}dx)} \to 0\quad\mbox{as }k\to \infty.
\]
The limiting function $\phi$ is a normalized eigenfunction, i.e,
\[
L_{V}\phi = \lambda\phi,\quad \mbox{and}\quad \int \phi^2 V^{p+1}\, dx  = 1,
\]
for some $\lambda\in\R$. Moreover, the following hold true:
\begin{enumerate}
\item If $\lambda_{\ell}=0$ for all $\ell\in\N$, then $\lambda=0$.
\item If $\lambda_{\ell}>0$ for all $\ell\in\N$, then $\lambda>0$.
\item If $\lambda_{\ell}<0$ for all $\ell\in\N$, then $\lambda<0$.
\end{enumerate}
\end{lemma}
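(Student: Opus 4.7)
Here is my plan for proving Lemma~\ref{lem-continuity}; it proceeds in three steps.

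\emph{Step 1: Uniform energy bound and limit extraction.} I would start by testing the eigenvalue equation $L_{V_\ell}\phi_\ell=\lambda_\ell\phi_\ell$ against $\phi_\ell V_\ell^{p+1}$ and using formula~\eqref{eqL} to obtain
\[
\int_\Omega |\grad\phi_\ell|^2 V_\ell^2\,dx = (p-1+\lambda_\ell)\int_\Omega \phi_\ell^2 V_\ell^{p+1}\,dx \le p-1+\Lambda.
\]
Because $\|h_\ell\|_{L^\infty}\to 0$ forces $V_\ell/V\to 1$ uniformly, the sequence $\phi_\ell$ is uniformly bounded in the Hilbert space $\dot H^1_2\cap L^2_{p+1}$ defined relative to $V$. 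The compact inverse of $L_V+pI$ recorded in Section~\ref{S1a} (via the isometry $V\cdot$) then yields a subsequence $\phi_{\ell_k}\to\phi$ strongly in $L^2_{p+1}$ and weakly in $\dot H^1_2$, and after a further extraction $\lambda_{\ell_k}\to\lambda$ for some $\lambda\in\R$.

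\emph{Step 2: Identifying the limit.} I would then pass to the limit in the weak formulation of the eigenvalue equation tested against $\varphi\in C^1_c(\Omega)$, exploiting the uniform convergence $V_\ell^q/V^q\to 1$ for the relevant powers $q\in\{2,p+1\}$, to obtain $L_V\phi=\lambda\phi$ weakly. The same strong $L^2_{p+1}$ convergence paired with uniform convergence of the weight $V_\ell^{p+1}/V^{p+1}$ preserves normalization, giving $\int\phi^2 V^{p+1}dx=1$, so $\phi\neq 0$ is a genuine eigenfunction.

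\emph{Step 3: Ruling out $\lambda=0$ under a sign constraint.} Part (1) is immediate from $\lambda=\lim\lambda_\ell=0$. For parts (2) and (3), suppose for contradiction that the $\lambda_\ell$ share a strict sign yet $\lambda=0$, so $\phi\in\ker L_V$. Since $V$ is ordinary and $\|h_\ell\|_{L^\infty}\to 0$, for $\ell$ large I can write $h_\ell=\Phi_V(\psi_\ell)$ with $\psi_\ell\in\ker L_V$ and $\psi_\ell\to 0$ in $L^2_{p+1}$ by continuity of $\Phi_V^{-1}$ (using $\Phi_V(0)=0$ and $(d\Phi_V)_0=\mathrm{id}$). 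For any fixed $\eta\in\ker L_V$, Lemma~\ref{L21} then produces
\[
\eta_\ell:=\frac{V}{V_\ell}(d\Phi_V)_{\psi_\ell}(\eta)\in\ker L_{V_\ell}.
\]
The $C^1$ regularity of $\Phi_V$ (so $(d\Phi_V)_{\psi_\ell}\to\mathrm{id}$) combined with $V/V_\ell\to 1$ uniformly should yield $\eta_\ell\to\eta$ strongly in $L^2_{p+1}$. Self-adjointness of $L_{V_\ell}$ and $\lambda_\ell\neq 0$ force the orthogonality $\int\phi_\ell\,\eta_\ell\,V_\ell^{p+1}\,dx=0$, and passing to the limit (strong convergence of both factors together with uniform convergence of the weight) yields $\int\phi\,\eta\,V^{p+1}\,dx=0$ for every $\eta\in\ker L_V$. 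Since $\phi$ itself lies in $\ker L_V$, this forces $\phi=0$, contradicting $\|\phi\|_{L^2_{p+1}}=1$.

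\emph{Main obstacle.} The delicate point is Step~3: I must pull kernel elements of $L_V$ back to kernel elements of $L_{V_\ell}$ in order to exploit orthogonality, and this transfer is precisely what kernel integrability (encoded by the ordinary limit hypothesis, via Lemma~\ref{L21}) provides. Without it, $\dim\ker L_{V_\ell}$ could drop below $\dim\ker L_V$ and a sequence of nonzero $\lambda_\ell$ with definite sign could plausibly collapse to $\lambda=0$ in the limit. The more routine but ubiquitous technical bookkeeping is the comparison between the varying weighted spaces $L^2(V_\ell^{p+1}dx)$ and the common target $L^2_{p+1}$; the uniform hypothesis $\|h_\ell\|_\infty\to 0$ makes each such comparison elementary, but it pervades Steps~1--3.
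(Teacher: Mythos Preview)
Your proposal is correct and follows essentially the same route as the paper, especially in Steps~2 and~3, where the use of Lemma~\ref{L21} to manufacture kernel elements $\eta_\ell\in\ker L_{V_\ell}$ converging to a given $\eta\in\ker L_V$, and the subsequent passage to the limit in the orthogonality relation, are identical to the paper's argument. The only notable difference is in Step~1: you obtain compactness from the energy identity $\int|\nabla\phi_\ell|^2 V_\ell^2\,dx = (p-1+\lambda_\ell)$ together with the compact form-domain embedding $\dot H^1_2\cap L^2_{p+1}\hookrightarrow L^2_{p+1}$ implied by the compact resolvent recorded in Section~\ref{S1a}, whereas the paper instead invokes the maximal regularity estimate of Proposition~\ref{P5} (applied to $h=t\phi_\ell$) to get an \emph{unweighted} bound $\|\nabla\phi_\ell\|_{L^2}\lesssim\Lambda+2$, then Lemma~\ref{L13b} for the full $H^1(\Omega)$ bound, and finally standard Rellich compactness. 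Your route is more elementary and avoids the heavier Section~\ref{S4} machinery; the paper's route yields the slightly stronger unweighted $H^1$ bound but this extra strength is not used downstream.
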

The lemma entails, in particular, that if $|\lambda_{\ell}|>0$ for all $\ell\in\N$, then 
\[
\liminf_{\ell\to\infty} |\lambda_{\ell}|  \ge \min\{ -\lambda_u, \lambda_s\},
\]
where $\lambda_u$ is the largest negative and $\lambda_s$ is the smallest positive eigenvalue of $L_V$.

\begin{proof}

For the compactness assertion, we aim to bound $\sup_\ell \|\phi_\ell \|_{H^1}$.
Applying {Proposition \ref{P5}} with $h/t =\phi_\ell=f/(\lambda_\ell t + 1)$  
yields the unweighted gradient bound
\[
\|\grad \phi_{\ell} \|_{L^2} \lesssim (\lambda_\ell+1) \|\phi_{\ell}\|_{L^2_{2p}} + \|\phi_{\ell}\|_{L^2_{p+1}} \lesssim (\lambda_\ell+2) \|\phi_{\ell}\|_{L^2_{p+1}}  \le  \Lambda+2.
\]
Now the local independence (in the relatively-uniform topology) of constants in $\lesssim$ on $V_\ell \in S$ combines with 
Lemma \ref{L13b} to imply the sequence $\phi_\ell$ is bounded in the Sobolev space $H^1(\Omega)$.
Via a Rellich compactness argument, we conclude that  $\{\phi_{\ell}\}_{\ell\in \N}$ converges strongly  subsequentially in $L^2$ (and then also in $L^2(V^{p+1}dx)$) and weakly in $H^1(V^2dx)$ towards some function $\phi$. 
 Moreover, thanks to the Bolzano--Weierstra\ss~theorem, we find that the sequence of eigenvalues $\{\lambda_{\ell}\}_{\ell\in\N}$ converges subsequentially to some $\lambda\in\R$. 
 
 Considering a common subsequence (indexed by $\ell_{k}$) and passing to the limit in the weak formulation of the eigenvalue equation,
 \[
 \int \grad \phi_{\ell_k}\cdot \grad f V_{\ell_k}^2\, dx  =\left(\lambda_{\ell_k} + p-1\right)\int \phi_{\ell_k} f V_{\ell_k}^{p+1}\, dx,
 \]
 where we also use the uniform convergence of the relative error $h_{k}$, we find that $\phi$ is a normalized eigenfunction of $L_V$ with {eigenvalue} $\lambda$.
 
 It remains to derive the assertion on the sign of the limiting eigenvalues. The first statement is trivial, while the proofs of two others are identical. Let's thus focus on one of them, say the {middle} one. It is clear that the limiting eigenvalue is nonnegative, $\lambda\ge 0$ and we have to rule out that it is in fact zero. For this purpose, we note that the eigenfunctions $\phi_{\ell}$ are orthogonal to the kernel,
 \begin{equation}
 \label{36c}
 \int \phi_{\ell} \zeta_{\ell} V_{\ell}^{p+1}\, dx = 0\quad \mbox{for any }\zeta_{\ell}\in \ker L_{V_\ell}.
 \end{equation}
We pick $\zeta \in \ker L_V$ and define $\zeta_{\ell}\in \ker L_{V_{\ell}}$ according to the inclusion \eqref{36b} derived in Lemma \ref{L21} by
\[
\zeta_{\ell}  = \frac1{h_{\ell}+1}(d\Phi_V)_{\psi_{\ell}} \zeta,
\]
where $\psi_{\ell}\in \ker L_V$ is such that $h_{\ell} = \Phi_V(\psi_{\ell})$.  It is straightforward to verify that $\zeta_{\ell}$ converges to $\zeta$ strongly in $L^2(V^{p+1}dx)$ as $\ell\to\infty$. Indeed, because of the imposed convergence of the relative error $h_{\ell}$ and since the diffeomorphism $\Phi_V$ vanishes only at the origin, we must have that $\psi_{\ell}\to0$ strongly in $L^2(V^{p+1})$. Furthermore, by the continuity of the derivative $d\Phi_V$, it holds that $(d\Phi_V)_{\psi_{\ell}}\to \mathrm{id}$. Using once again the uniform convergenve of the relative error, we conclude that $\zeta_{\ell}\to \zeta$ in $L^2(V^{p+1}dx)$. 

We now pass to the limit in the orthogonality condition \eqref{36c} with our particular construction of the $\zeta_{\ell}$'s, which was arbitrary in the choice of $\zeta$, and find
\[
\int \phi\zeta V^{p+1}\, dx  =0\quad \mbox{for any }\zeta \in \ker L_V.
\]
Hence, $\phi$ is a (nontrivial) eigenfunction of $L_V$ that is orthogonal to the kernel. We conclude that $\lambda>0$ as desired.
\end{proof}
The preceeding analysis allows us to conlude quite easily that the dimension of the kernels of the linear operators remains constant if the reference stationary solution is changed in a neighborhood of {an ordinary limit} $V$.

\begin{lemma}[Invariance of the kernel dimension near ordinary limits]
\label{L23a}
Let {$V\in S$ be an ordinary limit and  $\delta>0$ as in Definition \ref{D3}. Let $\tilde V\in S$ be a stationary solution 
close to $V$ in the sense that $h = \tilde V/V-1$ satisfies} $\|h\|_{L^{\infty}}\le \tilde \delta$,
for some $\tilde \delta\in(0,\delta)$. Then {$\tilde \delta$ sufficiently small implies}
\begin{equation}\label{64}
 \dim \ker L_V  = \dim \ker L_{\tilde V}.
\end{equation}
\end{lemma}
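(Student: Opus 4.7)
The strategy is to prove both inequalities $\dim \ker L_{\tilde V} \ge \dim \ker L_V$ and $\dim \ker L_{\tilde V} \le \dim \ker L_V$ separately, with the first following directly from Lemma \ref{L21} and the second via a contradiction argument leveraging Lemma \ref{lem-continuity}.

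\textbf{Lower bound.} Set $K := \dim \ker L_V$. Since $\|h\|_{L^\infty}\le\tilde\delta<\delta$, the diffeomorphism property of $\Phi_V$ yields a unique $\psi \in \U \subset \ker L_V$ with $h = \Phi_V(\psi)$. The $C^1$-regularity of $\Phi_V$ and the normalization $(d\Phi_V)_0=\mathrm{id}$ ensure that $(d\Phi_V)_\psi$ is a linear automorphism of the finite-dimensional space $\ker L_V$ once $\tilde\delta$ is chosen small enough. Composing with the isomorphism given by multiplication by $V/\tilde V$, the inclusion from Lemma \ref{L21} exhibits a $K$-dimensional subspace inside $\ker L_{\tilde V}$, so $\dim \ker L_{\tilde V}\ge K$.

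\textbf{Upper bound by contradiction.} If \eqref{64} failed, there would exist a sequence $\tilde V_\ell = V(1+h_\ell) \in S$ with $\|h_\ell\|_{L^\infty}\to 0$ yet $\dim \ker L_{\tilde V_\ell}\ge K+1$. Writing $h_\ell = \Phi_V(\psi_\ell)$ with $\psi_\ell\to 0$ in $L^2_{p+1}$ (by the diffeomorphism property), the subspace $W_\ell := \frac{V}{\tilde V_\ell}(d\Phi_V)_{\psi_\ell}(\ker L_V)$ is $K$-dimensional inside $\ker L_{\tilde V_\ell}$ by the lower bound step, so one can pick $\phi_\ell \in \ker L_{\tilde V_\ell}$ normalized in $L^2(\tilde V_\ell^{p+1}dx)$ and orthogonal to $W_\ell$ in that inner product. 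Applying Lemma \ref{lem-continuity}(1) to the constant eigenvalue sequence $\lambda_\ell=0$, a subsequence converges in $L^2(V^{p+1}dx)$ to a normalized $\phi \in \ker L_V$.

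\textbf{Passing the orthogonality to the limit.} For each $\zeta\in \ker L_V$, set $\zeta_\ell := (h_\ell+1)^{-1}(d\Phi_V)_{\psi_\ell}\zeta$, which lies in $W_\ell \subset \ker L_{\tilde V_\ell}$ by Lemma \ref{L21}. The uniform convergence $h_\ell\to 0$ together with continuity of $d\Phi_V$ at $0$ gives $\zeta_\ell \to \zeta$ in $L^2(V^{p+1}dx)$, exactly as in the proof of Lemma \ref{lem-continuity}. Passing to the limit in $0=\int \phi_\ell \zeta_\ell\, \tilde V_\ell^{p+1}\,dx$ — using also $(\tilde V_\ell/V)^{p+1}\to 1$ uniformly — yields $\int \phi \zeta\, V^{p+1}dx = 0$ for every $\zeta\in\ker L_V$. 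Taking $\zeta=\phi$ contradicts $\|\phi\|_{L^2(V^{p+1}dx)}=1$.

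The main technical obstacle is the last step: ensuring that the orthogonality to $W_\ell$ retains content in the limit. This requires the witness family $\{\zeta_\ell\}$ to sweep out all of $\ker L_V$ as $\ell\to\infty$, which depends crucially on the smooth manifold structure of stationary solutions near $V$ provided by the ordinary limit hypothesis. The relatively-uniform topology on $S$ delivers exactly the uniform control of $h_\ell$ (and hence of the weights $\tilde V_\ell^{p+1}$ and of $(d\Phi_V)_{\psi_\ell}$) needed to push the inner product to its limit.
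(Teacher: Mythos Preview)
Your argument is correct, but the contradiction step takes a more elaborate route than the paper's. You isolate a single normalized $\phi_\ell\in\ker L_{\tilde V_\ell}$ orthogonal to the explicit $K$-dimensional subspace $W_\ell=\frac{V}{\tilde V_\ell}(d\Phi_V)_{\psi_\ell}(\ker L_V)$, and then exploit the diffeomorphism structure to build witnesses $\zeta_\ell\to\zeta$ and pass the orthogonality to the limit---essentially replaying the machinery from the proof of Lemma~\ref{lem-continuity}. The paper instead simply selects $K+1$ orthonormal elements $\phi_{\ell,1},\dots,\phi_{\ell,K+1}$ of $\ker L_{V_\ell}$, applies Lemma~\ref{lem-continuity} to each, and observes that the uniform convergence of the weights $(V_\ell/V)^{p+1}\to 1$ preserves orthonormality in the limit, yielding $K+1$ orthonormal vectors in the $K$-dimensional space $\ker L_V$. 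The paper's version avoids any reference to $\Phi_V$ or $W_\ell$ in the contradiction step and is shorter; your version, while heavier, has the minor conceptual advantage of pinpointing \emph{which} direction in $\ker L_{\tilde V_\ell}$ is ``extra'' relative to the image of $\ker L_V$.
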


\begin{proof}
As a consequence of Lemma \ref{L21}, {and} because $(d\Phi_V)_{\psi}$ is an isomorphism, it is clear that
\begin{equation}\label{64a}
K= \dim \ker L_V \le \dim \ker L_{\tilde V}.
\end{equation}
We argue that both kernels have indeed the same dimension if $\tilde\delta $ is sufficiently small. We give an indirect argument and {derive} a contradiction {by assuming} that there exists a sequence $ h_{\ell} =  V_{\ell}/V-1$ satisfying $\| h_{\ell}\|_{L^{\infty}}\le \frac1{\ell}$ and $\dim \ker L_{ V_{\ell}}\ge K+1$. We pick $K+1$ orthonormal (and thus linearly independent) functions $\phi_{\ell,1},\dots,\phi_{\ell,K+1}$ in $\ker L_{ V_{\ell}}$. 

By the virtue of Lemma \ref{lem-continuity}, there exist subsequences (that we will not relabel) and normalized functions $\phi_1,\dots,\phi_{K+1}$ in $\ker L_V$ such that
\[
\phi_{\ell,k}\to \phi_k\quad \mbox{in }L^2(V^{p+1})\mbox{ as }\ell\to\infty.
\]
In particular,   using in addition that $\{h_{\ell}\}_{\ell\in\N}$ is converging uniformly, we find  that $\phi_1,\dots,\phi_{K+1}$ is  orthonormal. 
Thus, the dimension of $\ker L_V$ is at least  $K+1$, which contradicts~\eqref{64a}. 
\end{proof}

Collecting these technical preparations, we are able to derive the aforementioned key feature: 
{that ordinariness is a relatively-uniformly open property in the set $S$ of stationary limits.}

\begin{proposition}[{Ordinary limits form an open subset of $S$}]
 \label{L23}
Let $V \in S$ be an ordinary limit. {If $\tilde V \in S$ is a stationary solution sufficiently close
to $V$ in the relatively-uniform topology,} then $\tilde V$ is also {an ordinary limit}.

\end{proposition}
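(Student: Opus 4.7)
Since $\tilde V \in S$ is close to $V$ in the relatively-uniform topology, the relative error $h := \tilde V/V - 1$ lies in $L^\infty$ with small norm. As $\tilde V$ itself satisfies \eqref{eq-V}, $h$ is a stationary relative error at $V$ with $\|h\|_{L^\infty} \le \delta$, so by Definition \ref{D3} we may write $h = \Phi_V(\psi)$ for a unique $\psi \in \U$. Introduce the linear map $A \colon \ker L_V \to \ker L_{\tilde V}$,
\[
A\zeta := \frac{V}{\tilde V}(d\Phi_V)_\psi \zeta \;=\; \frac{(d\Phi_V)_\psi \zeta}{1+h},
\]
which takes values in $\ker L_{\tilde V}$ by Lemma \ref{L21}. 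Injectivity of $A$ is immediate: $(d\Phi_V)_\psi$ is invertible on $\ker L_V$ as the derivative of the $C^1$ diffeomorphism $\Phi_V$, and pointwise multiplication by $V/\tilde V>0$ is injective. Combined with the equality $\dim \ker L_V = \dim \ker L_{\tilde V}$ from Lemma \ref{L23a}, $A$ is a linear isomorphism between two finite-dimensional spaces.

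\textbf{Construction of $\Phi_{\tilde V}$ and its infinitesimal properties.} On a sufficiently small open neighborhood $\U_{\tilde V}$ of $0$ in $\ker L_{\tilde V}$, chosen so that $\psi + A^{-1}\tilde\psi \in \U$ throughout, define
\[
\Phi_{\tilde V}(\tilde\psi) := \frac{\Phi_V(\psi + A^{-1}\tilde\psi) - \Phi_V(\psi)}{1 + \Phi_V(\psi)}.
\]
The geometric meaning is that $\hat V := V\bigl(1 + \Phi_V(\psi + A^{-1}\tilde\psi)\bigr) \in S$ is the stationary solution which $\Phi_V$ assigns to $\psi + A^{-1}\tilde\psi$, and $\Phi_{\tilde V}(\tilde\psi) = \hat V/\tilde V - 1$ is the associated stationary relative error at $\tilde V$; in particular $L_{\tilde V}\Phi_{\tilde V}(\tilde\psi) = M(\Phi_{\tilde V}(\tilde\psi))$ by \eqref{36a}. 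Clearly $\Phi_{\tilde V}(0) = 0$, and a direct chain-rule computation, combined with the defining relation $(d\Phi_V)_\psi A^{-1}\tilde\psi = (1+h)\tilde\psi$, yields $(d\Phi_{\tilde V})_0 = \mathrm{id}$. The map is a composition of the linear isomorphism $A^{-1}$, the translation $\zeta \mapsto \psi + \zeta$, the $C^1$ diffeomorphism $\Phi_V$, and the smooth Möbius-type map $h' \mapsto (h' - h)/(1+h)$ (well-defined since $1+h > 0$ is bounded away from $0$), hence it is itself a $C^1$ diffeomorphism onto its image with respect to the $L^2_{p+1}$ topology.

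\textbf{Matching the target set: the main obstacle.} It remains to choose $\delta_{\tilde V} \in (0,1)$ and $\U_{\tilde V}$ so that the image of $\Phi_{\tilde V}$ is precisely
\[
\{\tilde h \in L^\infty \cap \dot H^1_2 \,:\, L_{\tilde V}\tilde h = M(\tilde h),\ \|\tilde h\|_{L^\infty}\le \delta_{\tilde V}\}.
\]
Given any such $\tilde h$, set $h' := (1+h)(1+\tilde h) - 1$; then $V(1 + h') = \tilde V(1 + \tilde h) \in S$, and provided $\|h\|_{L^\infty}$ and $\delta_{\tilde V}$ are small enough, $\|h'\|_{L^\infty} \le \delta$, whence $h' = \Phi_V(\psi')$ for a unique $\psi' \in \U$, giving $\tilde h = \Phi_{\tilde V}\bigl(A(\psi' - \psi)\bigr)$. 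Continuity of $\Phi_V^{-1}$ on its $L^2_{p+1}$-image ensures $\psi' \to \psi$ in $L^2_{p+1}$ as $\tilde h \to 0$ in $L^\infty$, so $A(\psi' - \psi) \in \U_{\tilde V}$ once $\delta_{\tilde V}$ is sufficiently small. The subtlest point is reconciling the uniform smallness condition defining the target set with the $L^2_{p+1}$-based neighborhoods provided by Definition \ref{D3}; this will be handled through the two-sided bound $V \asymp \tilde V$ valid under our hypothesis on $\tilde V$, together with the equivalence of all norms on the finite-dimensional spaces $\ker L_V$ and $\ker L_{\tilde V}$. Combining these steps shows $\tilde V$ is an ordinary limit, completing the plan.
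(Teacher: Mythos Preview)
Your proof is correct and follows essentially the same approach as the paper. Your isomorphism $A$ is precisely the map the paper obtains by combining Lemmas \ref{L21} and \ref{L23a}, and after unwinding $A^{-1}\tilde\psi = ((d\Phi_V)_\psi)^{-1}(\tfrac{\tilde V}{V}\tilde\psi)$ and $1+\Phi_V(\psi) = \tilde V/V$, your formula for $\Phi_{\tilde V}$ coincides verbatim with the paper's; you are simply more explicit than the paper (which writes only ``as can be readily verified'') in checking the diffeomorphism properties and the matching of the target set.
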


\begin{proof} By combining the results from Lemmas \ref{L21} and \ref{L23a}, we see that
\[
 \frac{\tilde V}{V} \ker L_{\tilde V} = (d \Phi_V)_\psi(\ker L_V),
 \]
 where $\psi\in \ker L_V$ is such that $h = \Phi_V(\psi)$. 
It follows that the mapping $\Phi_{\tilde V}$ defined by 
\[ \Phi_{\tilde V}(\tilde \psi) :=\frac{V}{\tilde V} \left[\Phi_V\left (\psi+ ((d\Phi_V)_\psi)^{-1} (\tfrac{\tilde V}{V} \tilde \psi) \right) -\Phi _V(\psi) \right],\] for $\tilde \psi \in \ker {L_{\tilde V}}$, is a $C^1$ diffeomorphism from a neighborhood of   $0\in\ker L_{\tilde V}$ into the set of stationary solutions $L_{\tilde V}\tilde h = M(\tilde h)$ and satisfies all the properties listed in Definition \ref{D3}, as can be readily verfied. Notice also that  we have see the construction of this diffeomorphism already in the proof of Lemma \ref{L21}: If $\zeta\in\ker L_V$ is sufficiently small so that  $h_{\zeta} = \Phi_V(\psi + \zeta)$ is well-defined, then $V_{\zeta} = V(h_{\zeta}+1)$ is a stationary solution and the relative error $\tilde h_{\zeta}$ with respect to $\tilde V$, 
\[
\tilde h_{\zeta} = \frac{V_{\zeta}}{\tilde V} - 1 = \frac{V}{\tilde V}\left(h_{\zeta} - \frac{\tilde V}{V}+1\right) = \frac{V}{\tilde V}\left(\Phi_V(\psi+\zeta) - \Phi_V(\psi)\right),
\]
solves the stationary error equation relative to the reference point $\tilde V$.
 \end{proof}

We will now derive a building block for the proof of Theorem \ref{T4} which exploits  both the previous Proposition \ref{L23} and the {Choi-Sun refinement of the} Merle--Zaag Lemma \ref{lem-choisun}, {to yield} a suitable reduction of the neutral modes as announced at the beginning of this section. The actual proof of exponential convergence will  follow subsequently by iteration.

\begin{proposition}[Improvement by changing {reference} stationary solutions]\label{P24-2} 
 Let $V\in S$ be an {ordinary limit} \eqref{eq-V}. There exist four constants $\e_0,\delta_0\in(0,1)$ and $\tau,C\in(1,\infty)$ with the following property: Let $V(1+h(t))$ {solve the dynamics \eqref{eq-U} and stay} near $V$,  \begin{equation}\label{24-1}
 \sup_{t\ge 0} \|h(t)\|_{L^{\infty}}\le  \delta_0;
 \end{equation}
$\{V(1+g_s)\}_{s\ge 0} {\subset S}$ be a family of stationary solutions to \eqref{eq-V} {also} close to $V$,   
 \begin{equation}\label{24-2} 
 \sup_{s\ge 0} \Vert g_s  \Vert_{L^{\infty}} \le  \delta_0;
  \end{equation} and  suppose $V(1+h(t))$ is $\e$-close to $V(1+g_s)$ on $t\in [s,s+2]$, 
 \begin{equation}\label{24-3}
      \sup_{t\in[s,s+2]}\Vert h(t) -g_s \Vert _{L^2_{p+1}} \le \eps 
 \end{equation}
  for all $s\ge0$ for some $\e \le \e_0$.

Then there is another family of stationary solutions $\{V(1+\hat g_s)\}_{s\ge \tau}$ so that   $V(1+h(t))$ is $\tfrac\e2$-close to $V(1+\hat g_s)$ on $t\in [s,s+2]$ 
 \begin{equation}\label{24-4}
      \sup_{t\in[s,s+2]}\Vert h(t) -\hat g_s \Vert _{L^2_{p+1}} \le \frac \eps 2 
 \end{equation}
  for all $s\ge\tau $. Moreover, for each $s\ge0$, $\hat g_{s+\tau}$ is close {enough to $g_s$  that}
 \begin{equation}\label{24-5}
     \Vert g_s -\hat g_{s+\tau} \Vert_{L^\infty} \le C \eps . 
 \end{equation}
  \end{proposition}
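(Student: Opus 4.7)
The plan adapts the Allard--Almgren--Simon paradigm (after K.~Choi--Sun): on a long window near $s+\tau$, decompose the relative error in the spectral frame of an approximate reference, apply the quantitative Merle--Zaag estimate of Lemma \ref{lem-choisun} to control the stable and unstable modes, and then exploit ordinariness (Proposition \ref{L23}) to absorb the central mode into a new stationary solution. The time lag $\tau$ in the statement arises naturally as the half--length of this window.

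Fix $s\ge 0$ and $\tau\ge 4$ (to be chosen), and take the reference $\bar V := V(1+g_s) \in S$. For $\delta_0$ small, $\bar V$ is itself an ordinary limit by Proposition \ref{L23}, with diffeomorphism $\Phi_{\bar V}$; moreover Lemma \ref{lem-continuity} forces the gap $\Lambda := \min(|\lambda_{-1}^{\bar V}|,\lambda_K^{\bar V})$ to stay bounded below uniformly in $s$. The quantity $\tilde h(t) := (h(t)-g_s)/(1+g_s)$ satisfies the relative--error equation with respect to $\bar V$, and applying hypothesis \eqref{24-3} to overlapping $2$--windows gives by telescoping $\|g_s - g_t\|_{L^2_{p+1}} \lesssim (1+|t-s|)\epsilon$, hence $\|\tilde h(t)\|_{L^2_{p+1}} \le \eta := C\tau\epsilon$ throughout $[s, s+2\tau]$. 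Denoting by $P_u, P_c, P_s$ the $L^2_{p+1}$--orthogonal projections onto the unstable, central, and stable subspaces of $L_{\bar V}$, the argument leading to \eqref{eq-odelema0}, combined with the bound $\|N_{\bar V}(\tilde h)\|_{L^2_{p+1}}\lesssim \|\tilde h\|_{L^\infty}\|\tilde h\|_{L^2_{p+1}}\lesssim \delta_0\|\tilde h\|_{L^2_{p+1}}$ from \eqref{51} and Proposition \ref{P1}, places $(X,Y,Z):=(\|P_u\tilde h\|,\|P_c\tilde h\|,\|P_s\tilde h\|)$ into the setting of Lemma \ref{lem-choisun} with coupling $\sigma\lesssim \delta_0$. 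The lemma, applied with $L=\tau$, yields for $t$ in the conclusion interval $[s+\tau/2, s+3\tau/2]\supset [s+\tau, s+\tau+2]$ that
\[
\|P_u\tilde h(t)\|+\|P_s\tilde h(t)\|\lesssim \delta_0\|P_c\tilde h(t)\|+\eta\, e^{-\Lambda\tau/4}.
\]

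Now define $\phi := P_c\tilde h(s+\tau) \in \ker L_{\bar V}$; for $\epsilon_0$ small, $\phi$ lies in the domain $\U$ of $\Phi_{\bar V}$, and we set $\hat h := \Phi_{\bar V}(\phi)$ and $\hat g_{s+\tau} := g_s + (1+g_s)\hat h$, so that $V(1+\hat g_{s+\tau}) = \bar V(1+\hat h) \in S$. Since $(d\Phi_{\bar V})_0 = \mathrm{id}$, we have $\hat h = \phi + O(\|\phi\|_{L^2_{p+1}}^2)$; because all norms are equivalent on the finite--dimensional kernel of $L_{\bar V}$, this yields $\|\hat g_{s+\tau}-g_s\|_{L^\infty} \lesssim \|\phi\|_{L^2_{p+1}} \le \eta \le C\tau\epsilon$, which is \eqref{24-5}. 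For \eqref{24-4}, write $h(t) - \hat g_{s+\tau} = (1+g_s)(\tilde h(t) - \hat h)$ and decompose
\[
\tilde h(t) - \hat h = (P_u+P_s)\tilde h(t) + \bigl(P_c\tilde h(t) - \phi\bigr) + \bigl(\phi - \hat h\bigr),
\]
for $t\in[s+\tau, s+\tau+2]$. The first piece is controlled by the displayed Choi--Sun bound; the middle piece by integrating $\partial_t P_c\tilde h = P_c N_{\bar V}(\tilde h)$ over $[s+\tau, t]$ and using $\|N_{\bar V}(\tilde h)\|_{L^2_{p+1}}\lesssim \delta_0\eta$ together with $|t-(s+\tau)|\le 2$; and the last by $\|\phi-\hat h\|_{L^2_{p+1}}\lesssim \|\phi\|^2 \lesssim \eta^2$. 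Combining, $\|h(t)-\hat g_{s+\tau}\|_{L^2_{p+1}} \lesssim (\delta_0 + e^{-\Lambda\tau/4} + \tau\epsilon)\,\tau\epsilon$. Choosing $\tau$ large enough that $\tau e^{-\Lambda\tau/4}\le 1/6$, then $\delta_0$ small enough that $\delta_0\tau\le 1/6$, and finally $\epsilon_0$ small enough that $\tau^2\epsilon_0\le 1/6$, delivers the required $\epsilon/2$ bound.

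The principal obstacle is the drift of the reference family $\{g_s\}$ across the long window: the natural envelope $\eta$ scales like $\tau\epsilon$ rather than $\epsilon$, so the small factors produced by Choi--Sun must absorb an extra $\tau$. This forces the hierarchy $\tau\gg 1 \gg \delta_0\tau \gg \epsilon_0\tau^2$, and explains why the constant $C$ in \eqref{24-5} is permitted to depend on $\tau$. It is precisely at this step that the finite--window Lemma \ref{lem-choisun} (rather than the asymptotic Lemma \ref{lem-MZODE3}) is essential, since we have no a~priori decay of $\|\tilde h\|$ on $[s, s+2\tau]$; infinite--time decay is what the iteration of this proposition will itself establish. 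A secondary subtlety is that the Choi--Sun estimate bounds only the scalar norm $\|P_c\tilde h(t)\|$; the vector motion of $P_c\tilde h(t)$ across the short target window $[s+\tau,s+\tau+2]$ must be estimated separately through its driving nonlinearity $P_cN_{\bar V}(\tilde h)$.
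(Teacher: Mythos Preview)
Your approach is the paper's: shift reference to $\bar V=V(1+g_s)$, telescope \eqref{24-3} across the window to get the envelope $\|\tilde h\|_{L^2_{p+1}}\lesssim\tau\epsilon$, run the finite-interval Choi--Sun Lemma~\ref{lem-choisun} to suppress the stable and unstable modes, and use ordinariness of $\bar V$ (Proposition~\ref{L23}) to absorb the center mode $\phi=P_c\tilde h(s+\tau)$ into a new stationary relative error $\hat h=\Phi_{\bar V}(\phi)$. The three-term decomposition of $\tilde h-\hat h$ and the resulting parameter hierarchy are also exactly the paper's. One harmless variation: you take the Merle--Zaag coupling $\sigma\lesssim\delta_0$ from the global $L^\infty$ bound, whereas the paper first upgrades the window estimate via Proposition~\ref{P1} to $\|\tilde h\|_{L^\infty}+\|\partial_t\tilde h\|_{L^\infty}\lesssim\tau\epsilon$ and uses the sharper $\sigma\lesssim\tau\epsilon$; both close.

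There is, however, a gap in your derivation of \eqref{24-5}. You write $\hat h=\phi+O(\|\phi\|^2)$ and then invoke norm equivalence on the finite-dimensional kernel to conclude $\|\hat h\|_{L^\infty}\lesssim\|\phi\|_{L^2_{p+1}}$. But $\hat h=\Phi_{\bar V}(\phi)$ does \emph{not} lie in $\ker L_{\bar V}$; it lies on the nonlinear set of stationary relative errors, and Definition~\ref{D3} only asserts that $\Phi_{\bar V}$ is $C^1$ as a map into $L^2_{p+1}$. Hence the remainder $\hat h-\phi$ is controlled only in $L^2_{p+1}$, not in $L^\infty$, and the finite-dimensional norm equivalence gives you nothing for it. The paper closes this differently: since $\hat h$ solves the elliptic equation $L_{\bar V}\hat h=M(\hat h)$, an elliptic analogue of the smoothing estimate in Proposition~\ref{P1} yields $\|\hat h\|_{L^\infty}\lesssim\|\hat h\|_{L^2_{p+1}}$, after which $\|\hat h\|_{L^2_{p+1}}\lesssim\|\phi\|_{L^2_{p+1}}$ follows from the $C^1$ property of $\Phi_{\bar V}$. (A related imprecision: $C^1$ with $(d\Phi_{\bar V})_0=\mathrm{id}$ gives only $\hat h=\phi+o(\|\phi\|)$, not $O(\|\phi\|^2)$; this does not upset the final balance, as the $o$-term is absorbed once $\epsilon_0$ is chosen small after $\tau$ is fixed.)
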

In the proof of Theorem \ref{T4} below, the exponential convergence rate $\gamma ={\frac{\log 2}{\tau}} $ is determined by the delay $\tau$ provided by the preceding {proposition}.

\begin{proof}To simplify the notation, we write 
\[\| f\| _{\tilde V} = \| f \|_{L^2(\tilde V^{p+1}) }\] for any $\tilde V$  stationary solution  \eqref{eq-V}. We start by noting that for two stationary solutions $\tilde V = V(1+\tilde g)$ and $\hat V = V(1+\hat g)$ to \eqref{eq-V} with $\Vert \tilde g \Vert _{L^\infty} $, $\Vert \hat g \Vert _{L^\infty}\le \delta_0<1$, there holds 
\[ \Vert f \Vert _{\tilde V} \le  \left(\frac{1+ \delta_0}{1-\delta_0}\right)^{\frac{p+1}2}  \|f\|_{\hat V}.\] 
 i.e.,  two norms are equivalent 
\[\Vert f \Vert _{\tilde V } \lesssim \Vert f \Vert_{\hat V}  \]
provided $\delta_0<1/2$. Throughout this proof, $f \lesssim g$ denotes the inequality $f\le C g$ for some constant $C$ which may depend on $n$, $m$, and $V$ but uniform in small $\e_0$, $\delta_0$, and  large $\tau$.

It suffices to show how $\hat g _\tau$ is chosen so to satisfy \eqref{24-4} and \eqref{24-5}. For other $\hat g_{s'+\tau}$, $s'>0$, we may shift the time of original problem by $s'$, consider $V(1+h(s'+t))$ and $V(1+g_{s'+s})$ in place of $V(1+h(t))$ and $V(1+g_s)$, respectively, and re-apply the previous assertion.  We now notice that by the triangle inequality and the hypothesis in \eqref{24-3}, we have that
\[
\|g_{s+2}-g_{s}\|_V \le \|g_{s+2}-h_{s+1}\|_V + \|h_{s+1}-g_{s}\|_V \le 2\eps,
\]
 for any $s\ge 0$, and thus by iteration, 
 \begin{equation}\label{24-6}
 \|g_{2k} - g_0\|_V \le k\eps,
 \end{equation}
 for any $k\in \N_0$.
 By another application of the triangle inequality on \eqref{24-3} and \eqref{24-6}, it holds that, for $\tau >1$,
 \be \label{24-7} 
  \sup_{t \in [0,2\tau]}\|h(t)-g_0\|_V \lesssim  \tau \eps.\ee 

Let us denote $\tilde V := V(1+g_0)$, a new stationary solution. If we write the solution $V(1+h(t))$ to \eqref{eq-U} in terms of $\tilde V$ and its relative quantity by $V(1+h(t))= \tilde V(1+\tilde h(t))$, then $\tilde h(t) := \frac{h(t)-g_0}{1+g_0}$.  Note that $\tilde h(t)$ solves the evolution equation
for relative error \begin{equation}\label{24-9}
 \partial_t \tilde h + L_{\tilde V}\tilde h = N_{\tilde V}(\tilde h),
 \end{equation}
equation \eqref{eq-relativeerror} with new {refenence} stationary solution $\tilde V$. 

In view of \eqref{24-1}, \eqref{24-2}, \eqref{24-7}, and the equivalence between $\Vert\cdot  \Vert_{V}$ and $\Vert\cdot  \Vert_{\tilde V}$, observe that 
\be  \label{24-10} \sup _{t\ge 0}\Vert \tilde h(t) \Vert _{L^\infty} \le \frac{2\delta_0 }{1-\delta_0}\quad \text{ and } \quad \sup_{t\in[0,2\tau] }\Vert \tilde h(t)\Vert_{\tilde V} \lesssim \tau \eps   . \ee 
	If we choose $\delta_0$ sufficiently small, then the smoothing estimate in Proposition \ref{P1} applies to $\tilde h(t)$, a solution to \eqref{24-9}, and we have 
	\be  \label{24-10-1}\sup_{t\in[1,2\tau] }\Vert \tilde h(t)\Vert_{L^\infty}+ \Vert \p_t \tilde h(t) \Vert_{L^\infty} \lesssim \e \tau  .\ee
	(In fact, the constants $\e$ and $C$ in Proposition \ref{P1} depend on $\tilde V$, not $V$. Since $\tilde V=  V(1+g_0)$ and $\Vert g_0 \Vert_{L^\infty}\le \delta_0 $, by assuming $\delta_0$ is small, we may assume $\e$ and $C$ do not depend on the choice of $g_0$.)

In the next step, we aim at applying a Merle--Zaag-type lemma to the unstable, center, and stable modes of $\tilde h$. For this purpose, we introduce the $L^2(\tilde V^{p+1}dx)$ orthogonal projections $\tilde P_u$, $\tilde P_u$ and $\tilde P_s$ onto the unstable, center, and stable eigenspaces generated by $L_{\tilde V}$, and write $\tilde h_u = \tilde P_u\tilde h$, $\tilde h_c = \tilde P_c\tilde h$ and $\tilde h_s = \tilde P_s\tilde h$. Arguing similarly {to} the proof of Theorem \ref{thm-dichotomy}, we derive the system of ordinary differential equations
\begin{align*}
\frac{d}{dt} \|\tilde h_u\|_{\tilde V}+ \tilde \lambda_u \|\tilde h_u\|_{\tilde V} &\ge -C\eps \tau \left(\|\tilde h_u\|_{\tilde V}+ \|\tilde h_c\|_{\tilde V} + \|\tilde h_s\|_{\tilde V}\right),\\
\left|\frac{d}{dt} \|\tilde h_c\|_{\tilde V}  \right| &\le C\eps \tau \left(\|\tilde h_u\|_{\tilde V}+ \|\tilde h_c\|_{\tilde V} + \|\tilde h_s\|_{\tilde V}\right),\\
\frac{d}{dt} \|\tilde h_u\|_{\tilde V}+ \tilde \lambda_s \|\tilde h_u\|_{\tilde V} &\le C\eps \tau \left(\|\tilde h_u\|_{\tilde V}+ \|\tilde h_c\|_{\tilde V} + \|\tilde h_s\|_{\tilde V}\right),
\end{align*}
for all $t\in [1,2\tau]$, for some constant $C>0$, where $\tilde \lambda_u$ is the largest negative and $\tilde \lambda_s$ the smallest positive eigenvalue of $L_{\tilde V}$. Note that $\e_0>0$ and $\tau>0$ are not fixed yet. Let us denote $\sigma:=C\e \tau$. Suppose we choose $\e_0$ and $\tau$ so that 	     \[ \sigma \le C \e_0 \tau \le \tilde  \lambda := \frac 12 \min\{-\tilde \lambda_u,\tilde \lambda_s\},\]
	     then the system implies
\begin{align*}
\frac{d}{dt} \|\tilde h_u\|_{\tilde V}- \tilde \lambda \|\tilde h_u\|_{\tilde V} &\ge -\sigma \left( \|\tilde h_c\|_{\tilde V} + \|\tilde h_s\|_{\tilde V}\right),\\
\left|\frac{d}{dt} \|\tilde h_c\|_{\tilde V}  \right| &\le \sigma \left(\|\tilde h_u\|_{\tilde V}+ \|\tilde h_c\|_{\tilde V} + \|\tilde h_s\|_{\tilde V}\right),\\
\frac{d}{dt} \|\tilde h_u\|_{\tilde V}+ \tilde \lambda \|\tilde h_u\|_{\tilde V} &\le\sigma \left(\|\tilde h_u\|_{\tilde V}+ \|\tilde h_c\|_{\tilde V} \right),
\end{align*}
for any $t\in [1,2\tau]$. 
	By the virtue of Lemma \ref{lem-choisun} and the bound in \eqref{24-10}, there exits a constant $\sigma_0$ dependent only on $\tilde \lambda$ such that if we further assume $ C\e_0 \tau \le \sigma_0$, it holds that
\begin{equation}\label{24-11}
\|\tilde h_u(t)\|_{\tilde V} + \|\tilde h_s(t)\|_{\tilde V} \lesssim \eps \tau \|\tilde h_c(t)\|_{\tilde V} + \eps \tau  e^{-\frac18\tilde \lambda \tau}\lesssim (\eps \tau)^2 + \eps \tau  e^{-\frac18\tilde \lambda \tau},
\end{equation}
for any $t\in[\frac {3\tau}4,\frac{5\tau}{4}]$. Here, we applied the lemma on the interval $t\in [ \frac{\tau }{2} ,\frac {3\tau}{2} ]$. We remark that as a direct consequence of Lemma \ref{lem-continuity}, $\tilde \lambda$ can be bounded away from zero uniformly in $\tilde V$, more precisely, we can suppose that
\begin{equation}
\label{24-11a}
\tilde \lambda > \frac 12 \min \{-\lambda_u, \lambda_s\} 
\end{equation}
if $\delta_0$ is chosen sufficiently small.

 We still have to bound the center modes. For this, we make use of the fact that, by the virtue of Proposition \ref{L23} and our hypothesis in \eqref{24-2},  the {limit} $\tilde V = V(g_s+1)$ is {ordinary}. We denote by $\Phi_{\tilde V}$ the diffeomorphism between suitable subsets of $\ker L_{\tilde V}$ and the set of stationary solutions relative to $\tilde V$
	    described in Definition~\ref{D3}. Thanks to the bound in \eqref{24-10} that $
 \|\tilde h_c(\tau)\|_{\tilde V} \le \|\tilde h(\tau )\|_{\tilde V} \lesssim \eps \tau$,  for any small $\hat \delta$, if $\eps_0 \tau$ is chosen sufficiently small, there exists a function $\tilde g_\tau = \Phi_{\tilde V}(\tilde h_c(\tau))$ with $\|\tilde g_\tau\|_{L^{\infty}}\le \hat \delta$ solving $L_{\tilde V}(\tilde g_\tau) = M(\tilde g_\tau)$ (i.e., $\tilde V(1+\tilde g_\tau)$ solves \eqref{eq-V}.) Moreover, since $\Phi_{\tilde V}(0)=0$ and $(d\Phi_{\tilde V})_0=\mathrm{id}$, we have that
\bea\label{24-12}
\|\tilde g_\tau - \tilde h_c(\tau)\|_{\tilde V} &= \|\Phi_{\tilde V}(\tilde h_c(\tau)) - \Phi_{\tilde V}(0) - (d\Phi_{\tilde V})_0(\tilde h_c(\tau))\|_{\tilde V} \\& = o(\|\tilde h_c(\tau)\|_{\tilde V}) = o(\eps \tau).
\eea 
 In order to observe that this estimate is stable under-order-one variations in time, we recall that the center modes
solve the (finite dimensional) system $\partial_t \tilde h_c = \tilde P_c N_{\tilde V}(\tilde h)$. An integration over some interval $[\tau ,t]$ and the quadratic estimate \eqref{51} on the nonlinearity give
\[
\|\tilde h_c(t) -\tilde h_c(\tau )\|_{\tilde V} \le \int_{\tau }^t \|N_{\tilde V}(\tilde h)\|_{\tilde V}\, dt \lesssim\int_{\tau }^t\left(\|\tilde h\|_{L^{\infty}} + \|\partial_t \tilde h\|_{L^{\infty}}\right) \|\tilde h\|_{\tilde V}\, dt.
\]
 We apply the bound in \eqref{24-10} and \eqref{24-10-1} to conclude
 \[
 \sup_{ t\in [\tau,\tau+2 ]} \|\tilde h_c(t)-\tilde h_c(\tau )\|_{\tilde V} \lesssim (\eps \tau )^2
 ,\] and thus, \eqref{24-12} can be generalized to
 \begin{equation}\label{24-13}
\sup_{t\in[\tau ,\tau+2]}\|\tilde g_{\tau} - \tilde h_c(t)\|_{\tilde V}   = o(\eps \tau).
\end{equation}

Later when we show \eqref{24-5}, it will be necessary to quantify the relation between $\hat \delta$ and $\eps \tau$. For this purpose, we notice that since $\tilde g_\tau $ solves $L_{\tilde V}\tilde g_\tau  = M(\tilde g_\tau )$, it satisfies the estimate $\|\tilde g_\tau \|_{L^{\infty}} \lesssim \|\tilde g_\tau \|_{\tilde V}$, and the inequality is uniform in $\tilde V$ and depends only on the bound in \eqref{24-2}. Indeed, this estimate can be derived parallel to the smoothing estimates in Proposition \ref{P1}. Therefore, using the properties of the diffeomorphism $\Phi_{\tilde V}$ again, we observe that
\begin{equation}\label{24-14}
\|\tilde g_\tau \|_{L^{\infty}} \lesssim \|\tilde g_\tau \|_{\tilde V}  = \|\Phi_{\tilde V}(\tilde h_c(\tau ))\|_{\tilde V} \lesssim \eps \tau
.\end{equation}

Let us summarize what we have obtained so far. We showed there is small $\delta_0>0$ and $c_0>0$ such that if $\e_0 \tau <c_0$ and $\e <\e_0$, then  there is a stationary solution $\tilde V(1+\tilde g_\tau)$ with the estimates \eqref{24-11}, \eqref{24-12}, \eqref{24-13}, and \eqref{24-14}. We shall now transform $\tilde g_\tau $ into the  solution  $\hat g_\tau $ of \eqref{36a} that we are looking for. We thus set \be \label{24-15} \hat g_\tau  = (g_0+1)\tilde g_\tau  + g_0.\ee Recalling that $h(t)$ and $\tilde h(t)$ are related by the same transformation, $h(t) = (g_0+1)\tilde h(t)+g_0$, and using the estimates \eqref{24-2}, we have that
\[
\|h(t) - \hat g_r\|_V  = \|(g_s+1)(\tilde h(t) - \tilde g_r)\|_V \lesssim \|\tilde h(t) - \tilde g_r\|_{\tilde V}.
\]
It remains to use the estimates in \eqref{24-11} and \eqref{24-13} together with the triangle inequality to deduce\[
\sup_{t\in[\tau ,\tau +2]}\|h(t) - \hat g_\tau \|_V \lesssim (\eps \tau)^2 + \eps \tau  e^{-\frac18\tilde \lambda \tau} + o(\eps \tau)
.\] 
First choose  $\tau $ large (independently from $\tilde V$ thanks to \eqref{24-11a}) and then $\eps_0$ sufficiently small to satisfy $\e _0\tau<c_0$ and   \eqref{24-4}
\[\sup_{t\in[\tau,\tau+2]} \Vert h(t)-\hat g_\tau\Vert_V \le \frac{1}{2} \eps \]for any $\eps\le \eps_0$.

Finally, \eqref{24-5} is an immediate consequence of \eqref{24-14} and the definition of $\hat g_\tau $ in \eqref{24-15}. 
\end{proof}

We now have  all the tools at hand to proceed to the proof of the exponential convergence result.
	\begin{proof}[Proof of Theorem \ref{T4}] The idea is to iterate Proposition \ref{P24-2}. By a translation in time, we may assume 
	\[
	\sup_{t\ge 0}\Vert h(t) \Vert_{L^\infty}\le \e_1
	\]
	 for some $\eps_1  \le \min(\delta_0,\e_0)$. Moreover, we choose $\e_1$ small so that $2C\e_1 <\delta_0$ holds, where $C<\infty$ is the constant in Proposition~\ref{P24-2}. 
	
	To start with, we consider the trivial solution family $g_{0,s}:=0$ for $s\ge 0$, which trivially satisfies the hypothesis of  Proposition \ref{P24-2} with $\eps=\eps_1$. Therefore  there exists a family of stationary errors $\{\hat g_{0,s}\}_{s\ge\tau}$ satisfying
	\[
	\sup_{s\ge \tau}\Vert \hat g_{0,s}\Vert_{L^\infty} \le C \e_1
	\]
	 and 
	 \[
	 \sup_{t\in [s,s+2]} \Vert h(t)- \hat g_{0,s} \Vert_{L^2_{p+1} }\le \frac{\e_1}2 
	 \]
	  for all $s\ge \tau$. Hence, the translated family $\{g_{1,s}\}_{s\ge 0}$ where $g_{1,s} = \hat g_{0,s+\tau}$ satisfies again the hypothesis of    Proposition \ref{P24-2}, this time with $\eps = \eps_1/2$ and $h(t)$ replaced by $h(t+\tau)$. We perform a series of iterations, leading to families $\{g_{k,s}\}_{s\ge 0}$ for any $k\in\N$ satisfying
\begin{equation}
\label{66}
\sup_{t\in [s,s+2]} \Vert h(t+k\tau )- g_{k,s} \Vert_{L^2_{p+1} }\  \le \frac{\e_1}{2^k}
\end{equation}
 and 
 \begin{equation}\label{65}
 \Vert g_{k-1,s}- g_{k,s }\Vert_{L^\infty} \le \frac{C\e_1}{2^{k-1}} 
 \end{equation}
  for all $s\ge 0$. Notice that the latter and the fact that we started with the trivial solution $g_{0,s} = 0$ entails that
  	\[\Vert g_{k,s}  \Vert_{L^\infty} \le \sum _{\ell=1}^k \Vert g_{k-\ell+1,s} - g_{k-\ell,s} \Vert_{L^\infty}\le  2C\e_1 \sum_{\ell=1}^k  2^{-k} \le  2C\e_1< \delta_0,\]
	by our choice of $\eps_1$, which guarantees that condition  \eqref{24-2} holds true in every iteration step.

	Moreover, the estimate \eqref{65} also implies that, for any fixed $s$, the sequence   $g_{k,  s}$ converges geometrically in $L^\infty$ to some $g_{\infty, s} $ ,
	\[
	\|g_{k,s} - g_{\infty,s}\|_{L^{\infty}} \le \frac{C\eps_1}{2^k},
	\]
for any $k\in\N$ and $s\ge 0$. We conclude via  the triangle inequality and estimate \eqref{66} that
\[
\|h(t+k\tau) - g_{\infty,s}\|_{L^2_{p+1} }\ \le C_1 2^{-k}
\]
for some new constant $C_1$,   any $s\ge 0$ and any $t\in[s,s+2]$. Picking $s=t=0$, we deduce exponential convergence with rate $\gamma = (\log 2)/\tau$ towards $g_{\infty,0}$, which must actually vanish,    $g_{\infty,0}=0$, because $h(t)$ is decaying to zero by the virtue of the Bonforte--Grillo--V\'azquez theorem \cite{BonforteGrilloVazquez12}, cf.~\eqref{vre}. This finishes the proof of the {second dichotomy}.
\end{proof}


\bibliography{fastdiffusion}
\bibliographystyle{alpha}
\end{document}